\documentclass[11pt]{article}
\usepackage[a4paper,margin=1in]{geometry}
\usepackage{amsmath,amssymb,amsthm,mathtools}
\usepackage{bm,booktabs,array,enumitem,microtype}
\usepackage[dvipsnames]{xcolor}
\usepackage{tikz}
\usepackage{centernot}
\usepackage{mathrsfs}
\usepackage{graphicx}
\usepackage{float}
\usepackage{pgfplots}
\usepackage{authblk}
\pgfplotsset{compat=1.18}
\usetikzlibrary{arrows.meta,calc,positioning,shapes,decorations.pathreplacing,patterns,backgrounds}
\usepackage[colorlinks=true,linkcolor=MidnightBlue,citecolor=BrickRed,urlcolor=RoyalBlue]{hyperref}

\newtheorem{theorem}{Theorem}[section]
\newtheorem{proposition}[theorem]{Proposition}
\newtheorem{lemma}[theorem]{Lemma}

\theoremstyle{definition}
\newtheorem{definition}[theorem]{Definition}
\newtheorem{assumption}[theorem]{Assumption}
\newtheorem{remark}[theorem]{Remark}

\newcommand{\E}{\mathbb E}
\newcommand{\Pp}{\mathbb P}
\newcommand{\R}{\mathbb R}
\newcommand{\N}{\mathbb N}
\newcommand{\Z}{\mathbb Z}
\newcommand{\D}{\mathcal D}
\newcommand{\Lop}{\mathcal L}
\newcommand{\Gop}{\mathcal G}
\newcommand{\cF}{\mathcal F}
\newcommand{\cM}{\mathcal M}
\newcommand{\cP}{\mathcal P}
\DeclareMathOperator{\Law}{Law}
\newcommand{\ind}{\mathbf 1}
\newcommand{\diff}{\mathrm{d}}

\title{Jump limits of continuous multiscale diffusions:\\ Homogenization under heavy-tailed activity bursts}

\author{Qi Zhang}
\affil{Beijing institute of mathematical sciences and applications, Beijing, 101408 China, Email: \texttt{qzhang@bimsa.cn}}

\date{September 2026}

\begin{document}
\maketitle

\begin{abstract}
We show that purely continuous microscopic diffusions can homogenize to a Markov process with genuine jumps. Consider a multiscale diffusion with time-dependent generator $\mathcal{L}_{0}+r_{t}^{\epsilon}\mathcal{L}_{1}$ driven by a regenerative random environment. When the integrated activity over a regeneration cycle has a regularly varying tail of index $\beta\in(0,1)$, the cumulative operational clock converges to a $\beta$-stable subordinator, compressing finite activity-driven diffusion into vanishingly short physical time intervals. 
To overcome the noncommutativity of $\mathcal{L}_0$ and $\mathcal{L}_1$, we develop an extended-time parametrization paired with Khasminskii-type time discretization, establishing finite-dimensional convergence and strong semigroup convergence on $C_{0}(\mathbb{R}^{d})$ toward a jump process with nonlocal generator $\overline{\mathcal{L}}=\mathcal{L}_{0}-\kappa(-\mathcal{L}_{1})^{\beta}$. Under uniform ellipticity of $\mathcal{L}_{1}$, the fractional power $(-\mathcal{L}_{1})^{\beta}$ is identified as a $2\beta$-stable-like nonlocal operator. We also clarify physical-time path-topology obstructions and apply the framework to Poisson resetting pulses, Ornstein--Uhlenbeck diffusions, and multiscale stochastic volatility.
\end{abstract}

\section{Introduction}
\label{sec:intro}

The asymptotic analysis of multiscale stochastic differential equations (SDEs) plays a central role in modern applied probability and mathematical physics. Classical averaging and homogenization theories describe a rapidly fluctuating environment through its macroscopic effective action on a slow system. When the fast environmental observable has sufficient mixing and integrable cycle rewards, its accumulated contribution converges to a deterministic linear clock, and the Fredholm alternative or Poisson equation then yields an effective continuous diffusion \cite{PardouxVeretennikov2001,PardouxVeretennikov2003}.

Heavy-tailed activity leads to a fundamentally different question: \emph{can a continuous microscopic diffusion converge to an effective process with jumps because a finite amount of motion is compressed into a vanishing interval of physical time?} In this paper, we demonstrate that this anomalous mechanism occurs when the operational activity clock converges to a stable subordinator, giving rise to an effective evolution characterized by generalized stable-like nonlocal operators.

To formulate this problem precisely, let $(\Omega,\cF,\Pp)$ be the probability space of the random environment equipped with a measure-preserving ergodic flow $(\theta_t)_{t\in\R}$, and let $Q:\Omega\to[0,\infty)$ be a nonnegative observable. We consider the multiscale diffusion
\begin{equation}\label{eq:micro_sde}
 \diff X_t^\varepsilon = b_0(X_t^\varepsilon)\,\diff t + \sigma_0(X_t^\varepsilon)\,\diff W_t^0  + r_t^\varepsilon\,b_1(X_t^\varepsilon)\,\diff t + \sqrt{r_t^\varepsilon}\,\sigma_1(X_t^\varepsilon)\,\diff W_t^1,
\end{equation}
where $r_t^\varepsilon(\omega):=Q(\theta_{t/\varepsilon}\omega)/(\varepsilon\,a_{\varepsilon^{-1}})$ is the rescaled random activity rate, and $W^0,W^1$ are independent Brownian motions on a separate probability space $(\Omega_W,\mathcal F_W,(\mathcal F_t^W)_{t\ge0},\Pp_W)$. The system couples a continuous background diffusion generated by $\Lop_0$ with an intermittent activity diffusion generated by $\Lop_1$, giving the conditional time-dependent generator $\Lop_t^\varepsilon(\omega)=\Lop_0+r_t^\varepsilon\Lop_1$. The cumulative environmental activity serves as the operational clock:
\begin{equation}\label{eq:micro_gen}
 A_t^\varepsilon
 =\int_0^t r_s^\varepsilon\,\diff s
 =\frac{1}{a_{\varepsilon^{-1}}}\int_0^{t/\varepsilon}Q(\theta_s\omega)\,\diff s,
 \qquad \mu^\varepsilon=\diff A^\varepsilon.
\end{equation}
Physical time $t$ measures chronological duration, while $A_t^\varepsilon$ records the total activity-driven motion. When the activity rate exhibits intense, concentrated bursts, an episode of vanishing physical duration can produce a macroscopic advance in operational time.

In standard homogenization, the ergodic theorem implies $A_t^\varepsilon/t\to \E[Q]$ whenever $\E[Q]<\infty$, yielding another diffusion in the limit. The situation changes radically when the integrated activity over a regeneration cycle, $R_k=\int_{\tau_{k-1}}^{\tau_k}Q(\theta_s\omega)\,\diff s$, has a heavy tail of regular variation with index $\beta\in(0,1)$. 
Since $\E[R_k]=\infty$, ergodic averaging breaks down, and the normalized clock $A^\varepsilon$ converges instead to a strictly $\beta$-stable subordinator $S^\beta$. 
Such stable limits for heavy-tailed additive functionals of Markov processes and stationary sequences have been studied in
\cite{AaronsonZweimuller2014,JaraKomorowskiOlla2009,Bethencourt2023,TyranKaminska2010,
BasrakKrizmanicSegers2012}.
In the pure-activity Brownian setting ($\Lop_0=0$, $\sigma_1=I$), conditioning on the clock shows that
\[
 \E e^{i\xi\cdot X_t^\varepsilon}
 =\E e^{-\frac12|\xi|^2 A_t^\varepsilon}
 \longrightarrow e^{-t\kappa(|\xi|^2/2)^\beta},
\]
identifying the limit $B_{S_t^\beta}$ as a rotationally symmetric $2\beta$-stable L\'evy process with generator $-\kappa(-\tfrac{1}{2}\Delta)^\beta$ \cite{Bertoin1996,Sato1999,SchillingSongVondracek2012,CaffarelliSilvestre2007}. At the microscopic level, an activity burst of operational size $r$ compresses the continuous heat semigroup $P_r^{(1)}$ into an infinitesimal slice of physical time. 
Integrating the Gaussian kernel against the stable jump density produces the power-law profile  
\[ \int_0^\infty(2\pi r)^{-d/2}e^{-|z|^2/(2r)}r^{-1-\beta}\,\diff r \propto |z|^{-d-2\beta}, \]
showing how purely continuous local diffusion transitions combine into macroscopic jump dynamics.

While the pure-activity setting can be analyzed via classical subordination, the general multiscale system~\eqref{eq:micro_sde} presents substantial analytical challenges because $\Lop_0$ and $\Lop_1$ generally do not commute. The slow system is not frozen between bursts; it continues to evolve under the persistent background diffusion $\Lop_0$ in physical time $t$ while burst transitions occur under $\Lop_1$. When macroscopic bursts of operational sizes $J_j$ arrive at physical times $0<T_1<\cdots<T_m<t$, the conditional transition operator must interleave both semigroups combined in time order:
\[
 P_{T_1}^{(0)}P_{J_1}^{(1)}
 P_{T_2-T_1}^{(0)}P_{J_2}^{(1)}\cdots
 P_{T_m-T_{m-1}}^{(0)}P_{J_m}^{(1)}P_{t-T_m}^{(0)}.
\]
Because the two generators do not commute, subordinating $P^{(1)}$ alone cannot construct the full macroscopic evolution.

To resolve this time-ordering obstruction, we augment the spatial diffusion with the accumulated operational activity. In the extended state space $\R^d\times[0,\infty)$, the coupled process $(X_t^\varepsilon, A_t^\varepsilon)$ advances along continuous, $1$-Lipschitz paths, allowing the limiting dynamics to be constructed as the solution of an SDE driven by a Poisson random measure with bounded Lipschitz coefficients. Adapting Khasminskii's time-discretization method \cite{KhasminskiiAveraging1968} to extended time, we freeze the coefficients on each mesh interval; uniform moment bounds and Euler approximation estimates then yield tightness and convergence of the extended-time processes. This extended-time formulation constructs the limiting Markov semigroup and verifies its Feller property before using the Phillips functional calculus to identify its generator. Moreover, controlling the accumulation of small bursts as $r\to 0$ requires the Skorokhod $M_1$ topology on $A^\varepsilon$, which is robust under composition with continuous functions and avoids the topological obstructions of the $J_1$ metric when continuous paths converge to discontinuous limits.

This approach yields a complete homogenization theory across three levels. First, Theorem~\ref{thm:clock_limit} establishes the environmental clock limit: under regenerative conditions with i.i.d.\ complete cycles, regularly varying cycle rewards of index $\beta\in(0,1)$, and finite mean cycle duration $\mu_\tau<\infty$, the activity measure converges vaguely, $\mu^\varepsilon\Rightarrow\diff S^\beta$, and the operational clock converges in $(D([0,\infty)),M_1)$ to a strictly $\beta$-stable subordinator $S^\beta$. Formulating this criterion through integrated cycle rewards rather than the pointwise marginal of $Q$ accommodates reward-duration dependence and delayed stationary initialization. Second, Theorem~\ref{thm:full_homogenization} transfers this clock convergence to the multiscale diffusion: for bounded globally Lipschitz coefficients, we prove annealed finite-dimensional convergence and strong semigroup convergence on $C_0(\R^d)$ toward a conservative Feller process. The limiting generator on $C_c^\infty(\R^d)$ takes the local-to-nonlocal form
\begin{equation} \label{eq:central_chain}
 \overline{\Lop}f=\Lop_0f+  \int_0^\infty(P_r^{(1)}f-f)\,\nu_\beta(\diff r) =\Lop_0f-\kappa(-\Lop_1)^\beta f, \qquad \kappa=\Gamma(1-\beta)/\mu_\tau.
\end{equation}
Third, under uniform ellipticity of $\Lop_1$, Theorem~\ref{cor:fractional_homog} identifies $(-\Lop_1)^\beta$ as a generalized $2\beta$-stable-like nonlocal operator and derives two-sided local jump kernel bounds $c|x-y|^{-d-2\beta}\le j_{2\beta}(x,y)\le C|x-y|^{-d-2\beta}$, showing how the spatial inhomogeneity of the microscopic diffusion dictates the macroscopic jump geometry.

These findings differ fundamentally from classical homogenization theories \cite{Khasminskii1968,PapanicolaouStroockVaradhan1977,BensoussanLionsPapanicolaou1978,JikovKozlovOleinik1994,PavliotisStuart2008} and invariance principles in random media \cite{KipnisVaradhan1986,Olla1994}, where fast fluctuations average out through Poisson equations \cite{PardouxVeretennikov2001,PardouxVeretennikov2003} and produce local second-order diffusions. In our heavy-tailed regime, the infinite-mean cycle rewards break the classical Fredholm alternative, generating an effective evolution with jumps. Although multiscale systems driven by microscopic jump noise have been investigated by Franke \cite{Franke2006} and Sandri\'c \cite{Sandric2016}, their limits merely preserve and renormalize pre-existing nonlocality. Here, the microscopic system is driven solely by Brownian motion; the nonlocal limit is an emergent phenomenon created entirely by the temporal compression of environmental bursts.

The mechanism also reveals an intriguing duality with continuous-time random walks and subdiffusive transport \cite{BouchaudGeorges1990,MetzlerKlafter2000,MeerschaertSikorskii2012}. While heavy-tailed waiting times produce an \emph{inverse} subordinator that traps and retards the system, heavy-tailed activity rewards generate a \emph{direct} subordinator that accelerates operational time, transforming continuous diffusions into jump processes. Stochastic clocks have also been widely utilized in financial econometrics \cite{ContTankov2004} and operator theory \cite{Phillips1952,Bertoin1996,Sato1999}, but those frameworks typically postulate the subordinated clock as an \emph{a priori} model. In contrast, we derive the subordinated semigroup dynamically from an underlying multiscale SDE in the presence of an active background operator $\Lop_0$.

Furthermore, the limiting generator $\overline{\Lop}=\Lop_0-\kappa(-\Lop_1)^\beta$ connects our probabilistic homogenization with the theory of variable-coefficient stable-like operators \cite{Bass1988,Kolokoltsov2000,Jacob2001,ChenKumagai2003,SongVondracek2003,BottcherSchillingWang2013}. While those works study analytic and pseudo-differential properties starting from a given jump kernel, our approach derives both the generator and its kernel bounds from the heat kernel of the microscopic activity diffusion and regenerative limit theorems \cite{AaronsonZweimuller2014,BasrakKrizmanicSegers2012,MelbourneZweimuller2015,Bethencourt2023}, thereby providing a physical and dynamical origin for this class of nonlocal operators.

The remainder of the paper is organized as follows. Section~\ref{sec:preliminaries} reviews the stationary environmental setting alongside tools from L\'evy--Poisson calculus and Feller semigroups. Section~\ref{sec:environmental_criteria} establishes the regenerative clock convergence theorem and verifies it on concrete models, including Poisson pulses and Ornstein--Uhlenbeck diffusions. Section~\ref{sec:model} presents the precise assumptions, the homogenization theorem, and applications to stochastic volatility. The proofs of homogenization and the jump kernel estimates are detailed in Section~\ref{sec:proofs}. Appendix~\ref{app:measure_topology} collects background on the measure-clock topologies.

\paragraph{Notation.}
\label{par:notation}
For a locally compact space $E$, $C_0(E)$ denotes the continuous functions vanishing at infinity, and $C_c^k(\R^m)$ denotes the $k$-times continuously differentiable functions with compact support. We write $\D=C_c^\infty(\R^d)$ for the test-function class and $D(\Lop)$ for the domain of a generator $\Lop$. For a bounded function $f:E\to\R$, $\|f\|_\infty:=\sup_{x\in E}|f(x)|$; for a bounded linear operator $T:C_0(E)\to C_0(E)$, $\|T\|_{\mathrm{op}}:=\sup_{\|f\|_\infty\le1}\|Tf\|_\infty$. Strong operator convergence means $\|T_n f-Tf\|_\infty\to0$ for every fixed $f$. For a path $z$ on a compact interval $J$, $\|z\|_{\infty,J}:=\sup_{u\in J}|z(u)|$. Unless otherwise specified, $|\cdot|$ denotes the Euclidean norm; for matrices we use the Frobenius norm and $A:B:=\operatorname{tr}(A^\top B)$. For random variables, $\|Y\|_{L^p}=(\E|Y|^p)^{1/p}$.

\section{Preliminaries}
\label{sec:preliminaries}

We first describe the random environment and its operational clock. The topology for activity measures and its relation to clock convergence are recalled in Appendix~\ref{app:measure_topology}. We then collect the L\'evy--Poisson calculus and Feller semigroup results used to construct and identify the limiting dynamics.

\subsection{Random environments and operational time}
\label{subsec:random_environment}

We begin by specifying the stationary framework of the random medium. Let $(\Omega, \cF, \Pp)$ be a complete probability space equipped with a one-parameter family of measure-preserving transformations $(\theta_t)_{t\in\R}$ forming a stationary ergodic metric dynamical system. Thus,
\begin{enumerate}[label=(\roman*)]
\item $\theta_0 = \mathrm{id}_\Omega$ and $\theta_{t+s} = \theta_t \circ \theta_s$ for all $s, t \in \R$;
\item the mapping $(t,\omega) \mapsto \theta_t\omega$ is jointly $(\mathcal B(\R)\otimes\cF, \cF)$-measurable;
\item $\Pp(\theta_t^{-1}A) = \Pp(A)$ for all $A \in \cF$ and $t \in \R$ (stationarity);
\item if $\theta_t^{-1}A = A$ for all $t \in \R$, then $\Pp(A) \in \{0,1\}$ (ergodicity).
\end{enumerate}

By stationary ergodic metric dynamical system $(\theta_t)_{t\in\R}$, the nonnegative random environmental observable $Q:\Omega\to[0,\infty)$ determines the activity $Q(\theta_t\omega)$. The following assumptions ensure that the accumulated activity is finite on bounded intervals and specify the regenerative structure used to obtain its stable limit.

\begin{assumption}[Locally finite activity]
\label{ass:local_activity}
The nonnegative observable $Q$ is measurable and satisfies
$\int_0^H Q(\theta_s\omega)\,\diff s<\infty$ almost surely for every finite $H$.
\end{assumption}

\begin{assumption}[Regenerative conditions]
\label{ass:level_C}
The stationary ergodic environment satisfies:
\begin{enumerate}[label=(\alph*)]
\item There exist random regeneration times $0\le\tau_0<\tau_1<\cdots$, with $\tau_0<\infty$ almost surely, so that $\mu_\tau=\E[\tau_k-\tau_{k-1}]\in(0,\infty), k\ge1.$
Moreover, the complete cycle segments 
\[
 \mathcal C_k:=\left(\tau_k-\tau_{k-1},\bigl(Q(\theta_{\tau_{k-1}+s}\omega)\bigr)_{0\le s<(\tau_k-\tau_{k-1})}\right),
 \qquad k\ge1,
\]
are i.i.d.\ and independent of the initial segment $(\tau_0,(Q(\theta_s\omega))_{0\le s<\tau_0})$. 
\item The complete cycle rewards
$R_k=\int_{\tau_{k-1}}^{\tau_k}Q(\theta_s\omega)\,\diff s$, $k\ge1$ satisfy the heavy tail
\begin{equation}\label{eq:regular_variation}
 \Pp(R_1>x)=x^{-\beta}L_R(x),\qquad 0<\beta<1,
\end{equation}
where $L_R$ is a positive measurable function on $(0,\infty)$, so that $L_R(cx)/L_R(x)\to1$ as $x\to\infty$ for every $c>0$. The normalization scale $a_T\to\infty$ so that $T\Pp(R_1>a_T)\to1$.
\end{enumerate}
\end{assumption}

For the complete-cycle reward $R_1$, regular variation and the normalization in Assumption~\ref{ass:level_C}(b) give $T\Pp(R_1>a_T r)\longrightarrow r^{-\beta}, r>0$.
The tail of this integrated reward depends on both the magnitude and persistence of activity within a cycle. The examples in Section~\ref{sec:verification} verify the criterion by computing complete-cycle rewards. Their negligible macroscopic duration follows from the integrability of a complete-cycle length.

The scale $a_T$ in Assumption~\ref{ass:level_C}(b) normalizes the integrated cycle rewards: among order $T$ complete cycles, order one rewards exceed $a_T$. Thus $a_{\varepsilon^{-1}}$ is this reward scale evaluated at the fast-time scale $T=\varepsilon^{-1}$. With this normalization, we define
\begin{equation}
 \mu^\varepsilon(\diff t)
 =r_t^\varepsilon\,\diff t,
 \qquad
 A_t^\varepsilon=\mu^\varepsilon([0,t])
 =\frac1{a_T}\int_0^{Tt}Q(\theta_s\omega)\,\diff s.
 \label{eq:cumulative_clock}
\end{equation}
These expressions are defined by local integrability under Assumption~\ref{ass:local_activity}. Physical time measures the duration of an episode, whereas $A^\varepsilon$ records how much activity-driven motion it produces. A burst can have vanishing physical duration and a nonvanishing increment of operational time. Its limiting effect will be a full activity-semigroup transition, combined in time order with the background motion. The accumulated clock is used directly; it is not an inverse clock describing trapping or waiting times.

\subsection{L\'evy processes, Poisson calculus, and semigroups}
\label{subsec:levy_semigroups}

We now collect the probabilistic and nonlocal operator facts needed to describe the limiting clock and the transitions generated by its bursts.
First, regarding the underlying jump noise and stochastic integration, a process $Z=(Z_t)_{t\ge0}$ in $\R^m$ is a L\'evy process if $Z_0=0$ almost surely, its increments are stationary and independent, and it is stochastically continuous. We use its c\`adl\`ag version. With the truncation $z\ind_{\{|z|\le1\}}$, its characteristic triplet is $(b_L,A_L,\nu_L)$, where $b_L\in\R^m$, $A_L$ is symmetric nonnegative definite, and
\[
 \nu_L(\{0\})=0,\qquad
 \int_{\R^m}(1\wedge|z|^2)\,\nu_L(\diff z)<\infty.
\]
The associated L\'evy--Khintchine convention is
\begin{equation}\label{eq:levy_khintchine_prelim}
  \E e^{i\xi\cdot Z_t}  = \exp\Big\{-t \big( -ib_L\cdot\xi+\tfrac12\xi^\top A_L\xi +\int_{\R^m}\bigl(1-e^{i\xi\cdot z} +i\xi\cdot z\ind_{\{|z|\le1\}}\bigr)\,\nu_L(\diff z) \big)\Big\}.
\end{equation}
A Poisson random measure $N$ on $[0,\infty)\times(\R^m\setminus\{0\})$ with intensity $\diff t\,\nu_L(\diff z)$ assigns independent Poisson counts to disjoint measurable sets of finite intensity. Its compensated version is denoted by $\widetilde N=N-\diff t\,\nu_L$. If $H$ is predictable and square integrable against the intensity, then
\begin{equation}
 \E\left|\int_0^t\int H(s,z)\,\widetilde N(\diff s,\diff z)\right|^2
 =\E\int_0^t\int|H(s,z)|^2\,\nu_L(\diff z)\,\diff s.
 \label{eq:poisson_isometry}
\end{equation}
For nonnegative predictable $H$, the expectation of its integral against $N$ equals its integral against the intensity. This compensation identity also holds when both sides are infinite.

Writing $A_L=\Sigma_L\Sigma_L^\top$, the L\'evy--It\^o representation is
\begin{equation}
 Z_t=b_Lt+\Sigma_LW_t
 +\int_0^t\int_{|z|\le1}z\,\widetilde N(\diff s,\diff z)
 +\int_0^t\int_{|z|>1}z\,N(\diff s,\diff z),
 \label{eq:levy_ito_prelim}
\end{equation}
where $W$ and $N$ are independent. The small-jump integral is the limit in $L^2$ of integrals with a positive lower cutoff; the large-jump integral is a finite sum on bounded time intervals. No first moment of the large jumps is assumed. We use these standard facts with the conventions of \cite{Applebaum2009,Sato1999}.

For $Z_t^x=x+Z_t$ and $f\in C_c^2(\R^m)$, we introduce the generator of the L\'evy process
\begin{equation}\label{eq:levy_generator_prelim}
 \begin{split}
 \Gop_Lf(x)={}&b_L\cdot\nabla f(x)+\tfrac12 A_L:D^2f(x)\\
 &+\int_{\R^m}\bigl[f(x+z)-f(x)
       -\ind_{\{|z|\le1\}}z\cdot\nabla f(x)\bigr]\,\nu_L(\diff z).
 \end{split}
\end{equation}
Taylor's formula bounds the small-jump integrand by a constant times $|z|^2$, and boundedness of $f$ controls the large jumps. 
Then the It\^o's formula reads
\begin{equation}
 \begin{split}
 f(Z_t^x)-f(x)={}&\int_0^t\Gop_L f(Z_{s-}^x)\,\diff s
 +\int_0^t\nabla f(Z_{s-}^x)\Sigma_L\,\diff W_s\\
 &\qquad +\int_0^t\int_{\R^m}
       [f(Z_{s-}^x+z)-f(Z_{s-}^x)]\,\widetilde N(\diff s,\diff z).
 \end{split}
 \label{eq:ito_levy_prelim}
\end{equation}
Both stochastic integrals are square-integrable martingales on bounded time intervals: their integrands are bounded by a constant times $1\wedge|z|$ for the jump part and by a constant for the Brownian part. The formula follows first with jumps truncated away from zero from the continuous It\^o formula and the finite jump increments. Letting the cutoff decrease to zero, the Poisson isometry controls the compensated martingales and Taylor's quadratic remainder controls their compensators. Taking expectations yields Dynkin's identity.

To analyze the resulting Markov evolutions on continuous functions, we turn to Feller semigroups and their generators. We equip $C_0(\R^m)$ with $\|\cdot\|_\infty$. Associated with the L\'evy process $Z=(Z_t)_{t\ge0}$ in $\R^m$, the transition operators
\begin{equation}
 P_t f(x) = \E[f(x + Z_t)], \qquad t\ge0, \quad x\in\R^m,
 \label{eq:levy_semigroup_def}
\end{equation}
define a conservative Feller semigroup on $C_0(\R^m)$. Each $P_t$ is a positive linear contraction, and stationary independent increments give $P_0=I$ and $P_{t+s}=P_tP_s$. Conservativity means $P_t1=1$ for the transition kernel's natural extension to bounded measurable functions; the constant function $1$ is not in $C_0(\R^m)$. Continuity and vanishing at infinity follow by dominated convergence. Uniform continuity of $f\in C_0(\R^m)$ and stochastic continuity of $Z$ give
\begin{equation}
 \|P_t f - f\|_{\infty} \le \E\!\left[\sup_{x\in\R^m}|f(x + Z_t) - f(x)|\right] \longrightarrow 0 \qquad \text{as } t\downarrow0.
 \label{eq:levy_strong_continuity}
\end{equation}

The infinitesimal generator $\Lop$ of a Feller semigroup $(P_t)_{t\ge0}$ on $C_0(\R^m)$ is defined by the limit in $\|\cdot\|_\infty$
\[
 \Lop f = \lim_{t\downarrow0}\frac{P_t f - f}{t}, \qquad D(\Lop) = \left\{f\in C_0(\R^m) : \lim_{t\downarrow0}\frac{P_t f - f}{t} \text{ exists in } C_0(\R^m)\right\}.
\]
For the L\'evy semigroup \eqref{eq:levy_semigroup_def}, taking expectations in \eqref{eq:ito_levy_prelim} gives $P_tf-f=\int_0^tP_s\Gop_Lf\,\diff s$. For $f\in C_c^2(\R^m)$, Taylor's bound on the small jumps and dominated convergence give $\Gop_Lf\in C_0(\R^m)$. Strong continuity then yields $t^{-1}(P_tf-f)\to\Gop_Lf$ in $\|\cdot\|_\infty$. Thus $C_c^2(\R^m)\subset D(\Lop)$, with generator given by \eqref{eq:levy_generator_prelim}.

A fundamental property of any strongly continuous contraction semigroup on $C_0(\R^m)$ (see e.g. \cite[Lemma~II.1.3(iv), p.~50]{EngelNagel2000}) is the following quantitative increment bound, which will be applied repeatedly to control the transitions of the activity diffusion across compressed burst episodes.

\begin{lemma}\label{ass:semigroup_lipschitz}
Let $(P_t)_{t\ge0}$ be a strongly continuous contraction semigroup on $C_0(\R^m)$ with generator $\Lop$. For every $f\in D(\Lop)$ and $s\ge0$, $\|P_sf-f\|_{\infty}\le s\|\Lop f\|_{\infty}.$
\end{lemma}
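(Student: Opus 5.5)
The plan is to derive the bound from the integral representation of the semigroup increment, which is the standard route in \cite{EngelNagel2000}.

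\emph{Step 1: the semigroup is differentiable along the domain.} Fix $f\in D(\Lop)$ and $u\ge0$. For $h>0$ I write
\[
h^{-1}(P_{u+h}f-P_uf)=P_u\bigl(h^{-1}(P_hf-f)\bigr).
\]
Since $P_u$ is a contraction, this converges in $\|\cdot\|_\infty$ to $P_u\Lop f$ as $h\downarrow0$. This shows $P_uf\in D(\Lop)$ and that the right derivative of $u\mapsto P_uf$ equals $P_u\Lop f$.

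For the left derivative, with $0<h\le u$, I write
\[
h^{-1}(P_uf-P_{u-h}f)-P_u\Lop f=P_{u-h}\bigl(h^{-1}(P_hf-f)-\Lop f\bigr)+\bigl(P_{u-h}\Lop f-P_u\Lop f\bigr).
\]
The first term tends to zero by contractivity. The second tends to zero by strong continuity, using $\sup_{s\le u}\|P_s\|_{\mathrm{op}}\le1$. Hence $u\mapsto P_uf$ is continuously differentiable in $C_0(\R^m)$, with derivative $P_u\Lop f$. The derivative is continuous in $u$ by strong continuity applied to $\Lop f$.

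\emph{Step 2: integrate.} The fundamental theorem of calculus for $C^1$ Banach-valued maps, with Riemann integrals, gives
\[
P_sf-f=\int_0^sP_u\Lop f\,\diff u.
\]
This is the same identity the paper already obtained probabilistically for L\'evy semigroups via Dynkin's formula.

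\emph{Step 3: estimate.} I bound the integral by
\[
\int_0^s\|P_u\Lop f\|_\infty\,\diff u\le s\|\Lop f\|_\infty,
\]
again using $\|P_u\|_{\mathrm{op}}\le1$.

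The only delicate point is the two-sided differentiability in Step 1. The left derivative needs both strong continuity and the uniform bound on the operators. If I want to avoid it, I can argue as follows instead. Take any continuous functional $\phi\in C_0(\R^m)^*$ and consider $g(u)=\phi(P_uf)$. The function $g$ is continuous and has a continuous right derivative $\phi(P_u\Lop f)$, so it is $C^1$ by a standard real-analysis lemma. Integrating and applying Hahn--Banach then gives the same identity.
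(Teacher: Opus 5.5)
Your proposal is correct and takes essentially the paper's route: the paper gives no separate argument but cites Engel--Nagel, Lemma~II.1.3(iv), which is precisely the identity $P_sf-f=\int_0^sP_u\Lop f\,\diff u$ for $f\in D(\Lop)$, and the bound then follows from contractivity exactly as in your Step~3. Your Step~1 (two-sided differentiability of $u\mapsto P_uf$ from the semigroup property, contractivity and strong continuity) is the standard justification of that identity, so nothing is missing.
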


Under Assumption~\ref{ass:slow_coefficients}, the background and activity diffusions generate conservative Feller semigroups $P^{(0)}$ and $P^{(1)}$ on $C_0(\R^d)$, as verified in Section~\ref{sec:model}. Even with spatially varying activity coefficients, the estimate $\|P_s^{(1)}f-f\|_{\infty}\le s\|\Lop_1f\|_{\infty}$ holds for $f\in D(\Lop_1)$. It controls changes in expected observables and ensures integrability in the Phillips formula and convergence of truncated generators. Control of the slow paths requires the stability argument in Section~\ref{sec:proofs}.

To model nondecreasing operational clocks, we now focus on subordinators and their Laplace exponents. A subordinator is a L\'evy process with nondecreasing paths, started at zero. Without killing, its Laplace transform has the form $\E[e^{-\lambda S_t}]=e^{-t\Phi(\lambda)}$, where $\Phi(0+)=0$. Its Laplace exponent $\Phi$ is a Bernstein function, defined as follows.

\begin{definition}[Bernstein function]
\label{def:bernstein}
A smooth function $\Phi: (0,\infty) \to [0,\infty)$ is called a \emph{Bernstein function} if $(-1)^{n-1}\Phi^{(n)}(\lambda) \ge 0$ for all $n \in \N$ and $\lambda > 0$. By the L\'evy--Khintchine representation, every Bernstein function with $\Phi(0+) = 0$ can be written as
\begin{equation}
 \Phi(\lambda) = b\,\lambda + \int_0^\infty \bigl(1 - e^{-\lambda r}\bigr)\,\nu(\diff r),
 \label{eq:bernstein_rep}
\end{equation}
where $b \ge 0$ is the drift, and $\nu$ is a L\'evy measure on $(0,\infty)$ satisfying $\int_0^\infty (1 \wedge r)\,\nu(\diff r) < \infty$.
\end{definition}

In \eqref{eq:bernstein_rep}, $b$ is the deterministic drift of the subordinator and $\nu$ is its L\'evy measure. The clock limits considered below have $b=0$.

For a driftless subordinator, the jump representation is $S_t=\int_{(0,t]\times(0,\infty)}r\,N(\diff s,\diff r)$,
where $N$ is a Poisson random measure with intensity $\diff s\,\nu(\diff r)$. For $\eta>0$, let $S_t^{(\eta)}$ retain only jumps of size greater than $\eta$. There are finitely many such jumps on a bounded time interval. The discarded mass satisfies
\begin{equation}
 \E\!\left[\sup_{t\le H}|S_t-S_t^{(\eta)}|\right]
 =H\int_{(0,\eta]}r\,\nu(\diff r)\longrightarrow0.
 \label{eq:prelim_small_jump_mass}
\end{equation}
Indeed, the difference is nonnegative and nondecreasing, so its supremum is its value at $H$, whose expectation is given by the Poisson integral formula. This estimate controls operational time; transferring it to the slow motion requires the stability argument in Section~\ref{sec:proofs}.

In particular, arising as the scaling limits of heavy-tailed rewards, the strictly $\beta$-stable subordinator $S^\beta = (S_t^\beta)_{t\ge0}$ with stability index $\beta\in(0,1)$ and scale parameter $\kappa>0$ is of central importance throughout this paper. Its Laplace transform is $\E[e^{-\lambda S_t^\beta}]=e^{-t\Phi_\beta(\lambda)}$, where the Laplace exponent is given by the Bernstein function
\begin{equation}
 \Phi_\beta(\lambda) = \kappa\lambda^\beta, \qquad \lambda > 0.
 \label{eq:stable_bernstein_exponent}
\end{equation}
By the standard integral identity $\int_0^\infty (1 - e^{-\lambda r})r^{-1-\beta}\,\diff r = \frac{\Gamma(1-\beta)}{\beta}\lambda^\beta$, the drift coefficient of $S^\beta$ vanishes ($b=0$) and its L\'evy measure on $(0,\infty)$ is given explicitly by
\begin{equation}\label{eq:stable_levy_measure_def}
 \nu_\beta(\diff r) = \frac{\kappa\,\beta}{\Gamma(1-\beta)}\,r^{-1-\beta}\,\diff r, \qquad r > 0.
\end{equation}
The dependence of $S^\beta$ and $\nu_\beta$ on $\kappa$ is implicit in the notation. Since the limit is a stable subordinator, Laplace transforms are more natural than characteristic functions, as they connect directly to the regularly varying tail and the limiting exponent \(\kappa\lambda^\beta\).
The strictly $\beta$-stable subordinator satisfies the self-similarity scaling relation $S_{ct}^\beta \stackrel{\mathrm{d}}{=} c^{1/\beta}S_t^\beta$ for all $c>0$, and its sample paths are almost surely strictly increasing pure-jump trajectories with infinite jump activity and finite total variation on bounded time intervals. Indeed, $\nu_\beta((0,1))=\infty$ whereas $\int_0^1r\,\nu_\beta(\diff r)<\infty$, and monotonicity gives $\operatorname{TV}_{[0,H]}(S^\beta)=S_H^\beta<\infty$ almost surely.

Finally, the preceding clock and semigroup constructions are linked by subordination, whose generator is given by the Phillips formula.
Let $(P_t)_{t\ge0}$ be a strongly continuous contraction semigroup with generator $\Lop$. For a Bernstein function $\Phi$ satisfying $\Phi(0+)=0$, let $\rho_t=\Law(S_t)$ be the law of the associated subordinator. The subordinated semigroup is defined by the Bochner integral
\[
 P_t^\Phi f=\int_{[0,\infty)}P_rf\,\rho_t(\diff r).
\]
The Phillips formula on $D(\Lop)$ is
\begin{equation}
 -\Phi(-\Lop)f=b\Lop f+\int_0^\infty(P_rf-f)\,\nu(\diff r).
 \label{eq:phillips_prelim}
\end{equation}
The integral converges in $\|\cdot\|_\infty$: Lemma~\ref{ass:semigroup_lipschitz} bounds the integrand by $r\|\Lop f\|_{\infty}$ for $r\le1$, while contractivity bounds it by $2\|f\|_{\infty}$ for $r>1$. The closure of the operator defined by the right-hand side on $D(\Lop)$ is the generator of $(P_t^\Phi)_{t\ge0}$; see \cite{Phillips1952,SchillingSongVondracek2012}. In particular, for the $\beta$-stable subordinator $S^\beta$ with $b=0$ and $\Phi_\beta(\lambda)=\kappa\lambda^\beta$, formula \eqref{eq:phillips_prelim} identifies the fractional power generator
\begin{equation}
 -\Phi_\beta(-\Lop)f = -\kappa(-\Lop)^\beta f = \int_0^\infty(P_rf-f)\,\nu_\beta(\diff r), \qquad f\in D(\Lop).
 \label{eq:fractional_power_phillips}
\end{equation}
This formula does not assert generation for the sum with a second, noncommuting generator.

\section{Stable Clocks from Random Environments}
\label{sec:environmental_criteria}
\label{subsec:clock_verification}

Building on classical stable-limit theory, this section establishes a self-contained environmental criterion for the homogenization problem studied here and provides explicit verification examples. The criterion is expressed in terms of integrated cycle rewards and gives the precise normalization, allows dependence between reward and duration within a cycle, handles delayed stationary initialization, and provides both activity-measure and Skorokhod $M_1$ convergence.

\subsection{Stable-clock criterion}
\label{subsec:regenerative_conditions}

We use $\cM([0,\infty))$ for nonnegative Radon measures with the vague topology and $M_1$ for the Skorokhod topology of cumulative clocks; definitions and the measure-to-clock lemma are given in Appendix~\ref{app:measure_topology}. We state the regenerative stable-clock criterion for the cumulative operational clocks and the activity measures
\[
A_t^\varepsilon:=\frac1{a_{\varepsilon^{-1}}}\int_0^{t/\varepsilon}Q(\theta_s\omega)\,ds, \qquad \mu^\varepsilon(dt):=dA_t^\varepsilon.
\]
The $\beta$-stable subordinator limits for positive infinite-mean processes and $M_1$ limits for clustered regularly varying sequences are developed in \cite{AaronsonZweimuller2014,BasrakKrizmanicSegers2012}. Melbourne and Zweim\"uller \cite[Theorem~2.2]{MelbourneZweimuller2015} lift induced functional limits to the original discrete-time system using integrable return times and control of within-excursion oscillations. Here nonnegative activity ensures monotone accumulation; we use i.i.d.\ complete cycles with regularly varying rewards and finite mean durations, allowing reward--duration dependence.

Theorem~\ref{thm:clock_limit} supplies the continuous-time regenerative criterion, its normalization, and the treatment of a finite initial segment in the form used by the homogenization theorem. Its proof uses classical regular-variation arguments \cite{Resnick1987}; vague measure convergence and $M_1$ clock convergence are linked by the standard monotone-path criterion \cite{Whitt2002}, as detailed in Appendix~\ref{app:measure_topology}.

\begin{theorem}\label{thm:clock_limit}
Under Assumptions~\ref{ass:local_activity} and~\ref{ass:level_C}, the activity measures converge vaguely to the measure of a strictly $\beta$-stable subordinator, and the cumulative operational clocks converge in the local $M_1$ topology (and hence in finite-dimensional distributions):
\begin{equation}\label{eq:regenerative_clock_limit}
 \mu^\varepsilon\Longrightarrow\diff S^\beta
 \quad\text{in }\cM([0,\infty)),\qquad
 A^\varepsilon
\Longrightarrow S^\beta
\quad\text{in }
\bigl(D([0,\infty)),M_1\bigr), \quad \text{as } \varepsilon\downarrow0,
\end{equation}
where the limiting process $S^\beta$ is the $\beta$-stable subordinator, with scale parameter $\kappa=\Gamma(1-\beta)/\mu_\tau$ determined by the environmental cycle characteristics.
\end{theorem}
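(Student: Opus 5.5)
The plan is to reduce the continuous-time clock to a renewal--reward process built from the i.i.d.\ cycles, prove a stable limit for that process, and then transfer it to $A^\varepsilon$ through the monotone-path criterion of Appendix~\ref{app:measure_topology}. Write $T=\varepsilon^{-1}$ and $N(u):=\max\{k\ge0:\tau_k\le u\}$. Monotonicity of the clock gives the sandwich
\[
 \frac1{a_T}\sum_{k=1}^{N(Tt)}R_k\;\le\;A^\varepsilon_t-\frac{1}{a_T}\int_0^{\tau_0}Q(\theta_s\omega)\,\diff s\;\le\;\frac1{a_T}\sum_{k=1}^{N(Tt)+1}R_k .
\]
The initial-segment term is an a.s.\ finite random variable, finite by Assumption~\ref{ass:local_activity} and $\tau_0<\infty$. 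Dividing it by $a_T\to\infty$ sends it to $0$ almost surely. Its independence from the cycles is not even needed for this step.

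\textbf{Step 1: the discrete partial sums.} Set $V_T(u):=a_T^{-1}\sum_{k\le \lfloor Tu\rfloor}R_k$. Since $T\Pp(R_1>a_Tr)\to r^{-\beta}$, the point processes $\sum_k\delta_{(k/T,\,R_k/a_T)}$ converge to a Poisson random measure with intensity $\diff u\,\beta r^{-1-\beta}\diff r$. This is classical; see \cite{Resnick1987}.

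To pass to sums, I would use the small-jump estimate. By Karamata's theorem, $T\,\E[(R_1/a_T)\ind_{\{R_1\le\eta a_T\}}]\to \frac{\beta}{1-\beta}\eta^{1-\beta}$, which tends to $0$ as $\eta\to0$. This is the discrete analogue of \eqref{eq:prelim_small_jump_mass}, and together with the point-process limit it yields $V_T\Rightarrow S^{\beta,1}$. Here $S^{\beta,1}$ is the driftless subordinator with Lévy measure $\beta r^{-1-\beta}\diff r$. Because $\int_0^\infty(1-e^{-\lambda r})\beta r^{-1-\beta}\diff r=\Gamma(1-\beta)\lambda^\beta$, its Laplace exponent is $\Gamma(1-\beta)\lambda^\beta$. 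Equivalently, I could check this directly from Laplace transforms: $T(1-\E e^{-\lambda R_1/a_T})\to\Gamma(1-\beta)\lambda^\beta$ by the Tauberian theorem, which gives finite-dimensional convergence by independence. Functional convergence in $J_1$, and hence in $M_1$, then follows from monotonicity and continuity of the limit at fixed times.

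\textbf{Step 2: time change.} Because the cycle lengths are i.i.d.\ with finite mean $\mu_\tau$, the renewal theorem gives $N(Tt)/T\to t/\mu_\tau$ almost surely, uniformly on compacts. The extra term $R_{N(Tt)+1}/a_T$ in the upper sandwich bound has to be controlled carefully: it is a single jump of the limit and must not be lost. I would handle it as follows. The composition map $(x,y)\mapsto x\circ y$ is $M_1$-continuous when $y$ is continuous and strictly increasing (Whitt, Thm.~13.2.3). Hence $V_T(N(T\cdot)/T)\Rightarrow S^{\beta,1}(\cdot/\mu_\tau)$, and this limit equals in law a subordinator with exponent $\Gamma(1-\beta)\lambda^\beta/\mu_\tau=\kappa\lambda^\beta$. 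So $\kappa=\Gamma(1-\beta)/\mu_\tau$, as claimed.

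Both sandwich bounds are monotone step functions that differ only by a one-index shift. Their $M_1$ distance is controlled by oscillation within a single cycle, and monotone interpolation keeps $A^\varepsilon$ within the completed graph of the step function. This yields $M_1$ convergence of $A^\varepsilon$ itself. Vague convergence of $\mu^\varepsilon$ then follows from the measure-to-clock lemma in Appendix~\ref{app:measure_topology}.

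\textbf{Main obstacle.} The hardest step is making this last comparison rigorous. $A^\varepsilon$ is continuous and nondecreasing, while its limit jumps. Within one cycle it climbs from the lower bound to the upper bound over physical time $(\tau_k-\tau_{k-1})/T$, and I must show this duration is uniformly small. Cycle lengths have finite mean, so $\max_{k\le CT}(\tau_k-\tau_{k-1})/T\to0$ in probability. Given that, parametric representations of the completed graphs can be matched within $o(1)$ in both time and space. A fallback is to verify the $M_1$ criterion for monotone functions directly, namely convergence at a dense set of times plus convergence at the endpoints, which Whitt gives for nondecreasing paths. This reduces everything to one-dimensional marginals, and those follow from the sandwich together with the fact that $\Pp(S^\beta$ jumps at a fixed $t)=0$.
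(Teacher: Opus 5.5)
Your proposal is correct in substance, but it takes a genuinely different route from the paper. The paper never proves a functional limit for the cycle partial sums. Instead it:
\begin{enumerate}
\item computes $T\E[1-e^{-\lambda R_1/a_T}]\to\Gamma(1-\beta)\lambda^\beta$ directly;
\item uses independent deterministic index blocks $B_{n_T(t_j)}-B_{n_T(t_{j-1})}$, with $n_T(t)=\lfloor Tt/\mu_\tau\rfloor$, to get joint Laplace transforms;
\item replaces $n_T(t)$ by the random count $N_{\mathrm{reg}}(Tt)$ through a band estimate (Markov's inequality on $1-e^{-\lambda(B_{n_++1}-B_{n_-})/a_T}$) and Slutsky;
\item upgrades finite-dimensional convergence to vague convergence of $\mu^\varepsilon$ by step-function approximation and tightness of masses;
\item treats the delay by a separate shift argument;
\item only then obtains $M_1$ convergence from Lemma~\ref{lem:measure_clock_continuity}(b).
\end{enumerate}
You instead prove process-level convergence of $V_T$ and perform the random time change in one continuous-mapping step, using $M_1$-continuity of composition at the continuous, strictly increasing limit $t/\mu_\tau$. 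You also absorb the delay into the sandwich; as you note, independence of the initial segment is then never used. Vague convergence comes last in your scheme. Your route is more structural and makes transparent why $M_1$ is the right topology: the continuous clock agrees with the renewal step process at regeneration epochs, up to a time error $\max_k\ell_k/T$. The paper's route is elementary and self-contained, using only Laplace transforms, Markov's inequality and its own appendix lemma. Yours leans on Resnick's point-process-to-sums theorem, Whitt's composition theorem, and a lifting from finite-dimensional to $M_1$ convergence for monotone processes.

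A few points need tightening.
\begin{itemize}
\item \textbf{Box, not graph.} Write $L_T,U_T$ for your lower and upper bounds. The clock with the initial segment removed does not lie on the completed graph of $L_T$; on cycle $k$ it lies in the box $[\tau_{k-1}/T,\tau_k/T]\times[L_T(\tau_{k-1}/T),L_T(\tau_k/T)]$. This gives $M_1$ proximity with time error $\max_k\ell_k/T$, except at the horizon $H$. There the parametrizations must end at the path values, so the straddling reward $R_{N(TH)+1}/a_T$ enters.
\item \textbf{The straddling reward.} At a fixed $H$ this term is not ``a jump of the limit that must not be lost''; it is negligible. Indeed $0\le U_T(H)-L_T(H)$ and both converge in law to $S_H$ by your composition argument, so the difference tends to zero in probability.
\item \textbf{Fallback needs joint marginals.} Your fallback does not reduce to one-dimensional marginals; you need joint finite-dimensional convergence. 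The sandwich supplies it through the orthant inequalities $\Pp(U_T(t_j)\le x_j\ \forall j)\le\Pp(\widetilde A_{t_j}\le x_j\ \forall j)\le\Pp(L_T(t_j)\le x_j\ \forall j)$, with $\widetilde A$ the clock minus the initial segment.
\item \textbf{Lifting Whitt's criterion.} Whitt's monotone criterion is deterministic. Lifting it to random paths needs a Skorokhod coupling at rational times, or the paper's detour through vague convergence, which is exactly what Lemma~\ref{lem:measure_clock_continuity}(b) packages.
\item \textbf{Direction of the appendix lemma.} Deriving vague convergence from $M_1$ convergence uses the converse of that lemma. The converse is true and easy, but it is not what the lemma states.
\item \textbf{$J_1$ versus $M_1$.} Monotonicity plus finite-dimensional convergence yields $M_1$, not $J_1$. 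The $J_1$ limit for $V_T$ comes from the point-process summation argument, though only $M_1$ is needed.
\end{itemize}
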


\begin{proof}[Proof of Theorem~\ref{thm:clock_limit}]
We first assume a process initiated at a regeneration epoch ($\tau_0=0$), and then extend the result to the general stationary initialization with an initial incomplete cycle allowed by Assumption~\ref{ass:level_C}.

Write $T=\varepsilon^{-1}$ and $n_T(t)=\lfloor Tt/\mu_\tau\rfloor$. 
Let $B_n=\sum_{k=1}^n R_k$ is the total reward accumulated over the first $n$ complete cycles, and $N_{\mathrm{reg}}(u)=\max\{k:\tau_k\le u\}$ is the number of cycles completed by time $u$. By Assumption~\ref{ass:level_C}(a), the cycle durations $\tau_k-\tau_{k-1}$ are i.i.d.\ with finite mean $\mu_\tau$. The strong law of large numbers therefore gives $\tau_n/n\to\mu_\tau$ and hence $N_{\mathrm{reg}}(Tt)/T\to t/\mu_\tau$ almost surely for each $t>0$.

We first evaluate the Laplace transform of the normalized single-cycle reward $R_1/a_T$. 
Define $g_T(\lambda)=\E[e^{-\lambda R_1/a_T}]$ for $\lambda>0$. 
Recall that $T\Pp(R_1>a_Tx)\to x^{-\beta}$ as $T\to\infty$ pointwise for each $x>0$. Integration by parts and the regular-variation tail \eqref{eq:regular_variation} yield
\begin{align} \label{eq:reward_laplace_limit}
 T\E\!\left[1-e^{-\lambda R_1/a_T}\right] &=\int_0^\infty \lambda e^{-\lambda x}\,T\Pp(R_1>a_Tx)\,\diff x
 \notag\\
 &\longrightarrow\int_0^\infty\lambda e^{-\lambda x}x^{-\beta}\,\diff x =\Gamma(1-\beta)\lambda^\beta,
 \quad \text{as } T\to\infty.
\end{align}
Here, the regular-variation bound is $T\Pp(R_1>a_Tx)\le Cx^{-\beta-\rho}$ for $x\le1$ and $\rho \in (0,1-\beta)$ once $a_Tx$ exceeds a fixed threshold; below that threshold the integral is bounded by a constant times $T/a_T\to0$, while on $x\ge1$, $T\Pp(R_1>a_Tx)\le Cx^{-\beta+\rho}$ is integrable against $\lambda e^{-\lambda x}$. 

For any finite collection of observation times $0=t_0<t_1<\cdots<t_m$, the cycle-reward block associated with the $j$th time interval $(t_{j-1},t_j]$ is $B_{n_T(t_j)}-B_{n_T(t_{j-1})}$. These blocks involve disjoint deterministic sets of cycle indices and are therefore independent, since the rewards $R_k$ are i.i.d.\ For Laplace parameters $\lambda_j\ge0$, this independence yields
\begin{align}
 \E\exp\left\{-\sum_{j=1}^m\lambda_j
 \frac{B_{n_T(t_j)}-B_{n_T(t_{j-1})}}{a_T}\right\}
 &=\prod_{j=1}^m  \E[e^{-\lambda_j R_1/a_T}]^{n_T(t_j)-n_T(t_{j-1})}\notag\\
 &\longrightarrow
 \exp\left\{-\frac{\Gamma(1-\beta)}{\mu_\tau}
       \sum_{j=1}^m(t_j-t_{j-1})\lambda_j^\beta\right\},\quad \text{as } T\to\infty.
 \label{eq:deterministic_cycle_fdds}
\end{align}
By \eqref{eq:stable_bernstein_exponent}, this limiting expression is the joint Laplace transform of the increments of the $\beta$-stable subordinator $S^\beta$ with scale parameter $\kappa = \mu_\tau^{-1}\Gamma(1-\beta)$.

It remains to replace the deterministic cycle indices $n_T(t)$ by the random renewal counting process $N_{\mathrm{reg}}(Tt)$. Since the observable $Q \ge 0$, we have
\begin{equation}\label{eq:renewal_sandwich}
 B_{N_{\mathrm{reg}}(Tt)}\le \int_0^{Tt}Q(\theta_s\omega)\,\diff s\le B_{N_{\mathrm{reg}}(Tt)+1}.
\end{equation}
Fix $0<\delta<t/\mu_\tau$ and define the index bounds $n_- =\lfloor T(t/\mu_\tau-\delta)\rfloor$ and $n_+=\lceil T(t/\mu_\tau+\delta)\rceil$. Since $N_{\mathrm{reg}}(Tt)/T\to t/\mu_\tau$ almost surely, we have
\[ \lim_{T \to \infty}\mathbb{P} \Big( n_-\le N_{\mathrm{reg}}(Tt)\le n_+ \Big) = 1. \]
On event $\{n_-\le N_{\mathrm{reg}}(Tt)\le n_+\}$, the operational clock error is bounded by
\[
 \left|A_t^\varepsilon-\frac{B_{n_T(t)}}{a_T}\right|
 \le \frac{B_{n_++1}-B_{n_-}}{a_T}.
\]
For any $z,\lambda>0$, Markov's inequality applied to \(1-e^{-\lambda z}\) gives
\begin{align}\label{eq:boundary_band}
 \limsup_{T\to\infty}\Pp\left( \frac{B_{n_++1}-B_{n_-}}{a_T}>z\right)
 = & \limsup_{T\to\infty}\Pp\left( 1-e^{-\lambda(B_{n_++1}-B_{n_-})/a_T}>1-e^{-\lambda z}\right) \notag\\
 \le & \limsup_{T\to\infty}
 \frac{\E\left[1-e^{-\lambda(B_{n_++1}-B_{n_-})/a_T}\right]}{1-e^{-\lambda z}}
 \notag\\
 = & \limsup_{T\to\infty} \frac{1-g_T(\lambda)^{\,n_++1-n_-}}{1-e^{-\lambda z}}
 \notag\\
 = & \frac{1-e^{-2\delta\Gamma(1-\beta)\lambda^\beta}}
              {1-e^{-\lambda z}}.
\end{align}
Here $(n_++1-n_-)/T\to2\delta$, so \eqref{eq:reward_laplace_limit} yields the last equality.
Letting $\delta\downarrow0$ after $T\to\infty$ shows that, for every $z>0$,
\[
 \lim_{\delta\downarrow0}\limsup_{T\to\infty}
 \Pp\left(\frac{B_{n_++1}-B_{n_-}}{a_T}>z\right)=0.
\]
Applying the same estimate at the finitely many observation times and combining with \eqref{eq:deterministic_cycle_fdds} via Slutsky's theorem proves the claimed finite-dimensional clock limit: $A^\varepsilon\xrightarrow{\mathrm{f.d.d.}}S^\beta$. 

Next we establish vague convergence of the activity measures $\mu^\varepsilon$ in $\cM([0,\infty))$. For a test function $\varphi\in C_c([0,\infty))$ with $\text{supp}\varphi \subset [0,H]$, approximate $\varphi$ uniformly on $[0,H]$ by step functions 
\[ \varphi_m(t) = \varphi(0)\mathbf 1_{\{0\}}(t)+\sum_{j=1}^m c_j \mathbf 1_{(t_{j-1},t_j]}(t), \qquad  0=t_0<t_1<\cdots<t_m=H \]
with deterministic endpoints, at which the limiting clock is almost surely continuous. 

Since $A^\varepsilon\xrightarrow{\mathrm{f.d.d.}}S^\beta$, we have
\[ 
\int\varphi_m\,d\mu^\varepsilon = \sum_{j=1}^m c_j \bigl(A_{t_j}^\varepsilon-A_{t_{j-1}}^\varepsilon\bigr) \Longrightarrow \int\varphi_m\,dS^\beta, \quad \text{as } \varepsilon \to 0. 
\]
Here, the approximation error is bounded:
\[
\left| \int(\varphi-\varphi_m)\,d\mu^\varepsilon \right| \le \|\varphi-\varphi_m\|_{\infty} \mu^\varepsilon([0,H]) = \|\varphi-\varphi_m\|_{\infty} A_H^\varepsilon.
\]
Thus these total masses are tight by $A_H^\varepsilon\Longrightarrow S_H^\beta$ at the endpoint $t = H$. It follows that 
\[
\int^H _0 \varphi\,d\mu^\varepsilon \Longrightarrow \int^H _0 \varphi\,dS^\beta \quad \text{as }\varepsilon \to 0.
\]
The same argument applies jointly to every finite collection of test functions. Tightness of the masses on every $[0,H]$ gives tightness in the vague topology: choose bounds on the masses on $[0,n]$, $n\in\N$, with summable failure probabilities and use relative compactness of locally mass-bounded measures. The test integrals identify every subsequential limit as $\diff S^\beta$. This proves the measure convergence in \eqref{eq:regenerative_clock_limit} for a process started at $\tau_0=0$.

Finally, consider the general stationary initialization with an initial incomplete cycle of duration $\tau_0 <\infty$ almost surely. By Assumption~\ref{ass:local_activity}, whenever $\tau_0<\infty$, we can choose an integer $n> \tau_0$, so that
\[
 R_{\mathrm{init}}:=\int_0^{\tau_0} Q(\theta_s\omega)\,\diff s
 \le\int_0^n Q(\theta_s\omega)\,\diff s<\infty, \quad \text{almost surely}. 
\]
Since $\tau_0<\infty$ almost surely, $\tau_0/T\to0$ almost surely as $T\to\infty$. Furthermore, $R_{\mathrm{init}}/a_T\to0$ almost surely since $a_T\to\infty$. Thus for fixed $t>0$ and $\delta\in(0,t)$, on $\{\tau_0/T\le\delta\}$ the shifted post-regeneration clock at $t$ lies between its clocks at $t-\delta$ and $t$. 
The latter increments converge in law to $S_t^\beta-S_{t-\delta}^\beta$, which tends to zero in probability as $\delta\downarrow0$. The initial reward divided by $a_T$ tends to zero almost surely. This proves the same finite-dimensional limits after the delay, and the preceding test-function argument gives the measure convergence $\mu^\varepsilon\Longrightarrow\diff S^\beta$. 
Since $S^\beta$ has no fixed atoms, Lemma~\ref{lem:measure_clock_continuity}(b) now gives convergence of the cumulative clocks in the local $M_1$ topology and on each deterministic interval $[0,H]$. Thus Theorem~\ref{thm:clock_limit} holds with the stated initialization.
\end{proof}

\begin{remark}[Microscopic origin of the stable clock]
\label{rem:burst_interpretation}
On the scale $T=\varepsilon^{-1}$, order $T$ regeneration cycles occur per unit macroscopic time, whereas only order one have integrated rewards comparable to $a_T$. These rare episodes produce the macroscopic activity atoms. Their physical durations vanish: writing $\ell_k=\tau_k-\tau_{k-1}$, finite mean duration gives, for every $\delta>0$,
\begin{equation}
 T\Pp(\ell_1>\delta T)
 \le\delta^{-1}\E[\ell_1\ind_{\{\ell_1>\delta T\}}]\longrightarrow0.
 \label{eq:cycle_duration_bound}
\end{equation}
A union bound therefore controls the maximal duration among order $T$ cycles. The heavy tail of the integrated reward determines the atom sizes, while the renewal rate $\mu_\tau^{-1}$ determines their frequency; see Figure~\ref{fig:regenerative_burst_clock}. Thus absolutely continuous microscopic activity concentrates into the atomic measure $\diff S^\beta$, while the increasing clocks converge in Skorokhod $M_1$ topology.
\end{remark}

\begin{figure}[htbp]
\centering
\begin{tikzpicture}
\begin{axis}[
    width=0.48\textwidth,
    height=4.8cm,
    title={\textbf{(a) Regenerative activity}},
    title style={font=\footnotesize},
    xlabel={Fast time $s$},
    ylabel={Activity $Q(s)$},
    xlabel style={font=\scriptsize},
    ylabel style={font=\scriptsize},
    tick label style={font=\tiny},
    xmin=0, xmax=50,
    ymin=0, ymax=46,
    xtick={0, 10, 20, 30, 40, 50},
    ytick={0, 10, 20, 30, 40},
    grid=both,
    grid style={line width=.1pt, draw=gray!20}
]
\addplot[line width=0.8pt, RoyalBlue!80!black] coordinates {
    (0,1.2) (2,1.8) (4,0.9) (6,2.1) (7.5,1.1) (8.2,38.5) (8.8,32.0) (9.5,4.2)
    (11,1.5) (14,2.2) (17,0.8) (19.5,1.9) (21.2,42.0) (21.9,28.4) (22.5,2.1)
    (25,1.4) (28,2.0) (32,0.9) (35.4,1.8) (37.1,29.5) (37.8,18.2) (38.5,1.3)
    (42,1.7) (45,2.1) (48,1.0) (50,1.6)
};
\node[font=\tiny, BrickRed, fill=white, inner sep=1pt] at (axis cs: 8.2, 41) {Cycle reward $R_1$};
\node[font=\tiny, BrickRed, fill=white, inner sep=1pt] at (axis cs: 21.2, 44) {Cycle reward $R_2$};
\node[font=\tiny, BrickRed, fill=white, inner sep=1pt] at (axis cs: 37.1, 32) {Cycle reward $R_3$};
\end{axis}
\end{tikzpicture}
\hfill
\begin{tikzpicture}
\begin{axis}[
    width=0.48\textwidth,
    height=4.8cm,
    title={\textbf{(b) Clock concentration}},
    title style={font=\footnotesize},
    xlabel={Macroscopic Time $t$},
    ylabel={Operational time},
    xlabel style={font=\scriptsize},
    ylabel style={font=\scriptsize},
    tick label style={font=\tiny},
    xmin=0, xmax=1.0,
    ymin=0, ymax=10.5,
    xtick={0.0, 0.2, 0.4, 0.6, 0.8, 1.0},
    ytick={0, 2, 4, 6, 8, 10},
    grid=both,
    grid style={line width=.1pt, draw=gray!20},
    legend style={font=\tiny, at={(0.03,0.95)}, anchor=north west, fill opacity=0.9}
]
\addplot[line width=0.8pt, RoyalBlue!60!black, dotted] coordinates {
    (0.00,0.0) (0.16,0.2) (0.17,2.8) (0.42,3.1) (0.43,6.5) (0.74,6.8) (0.75,9.4) (1.00,9.7)
};
\addlegendentry{Continuous $A_t^\varepsilon$}

\addplot[line width=1.4pt, BrickRed] coordinates {
    (0.00,0.0) (0.17,0.0) (0.17,2.8) (0.43,2.8) (0.43,6.5) (0.75,6.5) (0.75,9.4) (1.00,9.4)
};
\addlegendentry{Stable $S_t^\beta$}
\end{axis}
\end{tikzpicture}

\caption{Illustration of the regenerative burst mechanism and clock concentration. 
The areas accumulated during prominent activity cycles correspond to large rewards $R_k$ (left); after rescaling, their short time spans produce jumps of the limiting operational clock (right). The plotted curves are illustrative, not sample paths from a specified model.}
\label{fig:regenerative_burst_clock}
\end{figure}

\subsection{Examples: resetting pulses and an ergodic diffusion}
\label{sec:verification}

We verify Assumptions~\ref{ass:local_activity} and~\ref{ass:level_C} for two complementary environments. Poisson resetting pulses give arbitrary stable indices $\beta\in(0,1)$ through heavy-tailed external amplitudes and finite-mean cycle durations. A stationary Ornstein--Uhlenbeck diffusion, with a nonlinear activity observable, gives an explicit $1/2$-stable cycle reward. The latter environment will drive the stochastic-volatility application in Section~\ref{subsec:ou_volatility}.

\paragraph{Model 1: Poisson resetting pulses with exponential decay.}
This environment describes external shocks that replace the current activity by a new random amplitude, followed by exponential relaxation. It is a resetting mechanism, rather than additive shot noise: the latest shock replaces the residual activity of earlier shocks. Its explicit renewal structure provides a tractable description of intermittent excitation, transient stress, or event-driven volatility; the Pareto amplitudes below are a modeling assumption.
Let $N=\sum_{j\in\Z}\delta_{T_j}$ be a stationary Poisson point process on $\R$ with intensity $\lambda>0$, indexed so that $T_0\le0<T_1$. Independently of $N$, let $(E_j)_{j\in\Z}$ be i.i.d.\ marks with
$\Pp(E_0>x)=x^{-\beta}$ for $x\ge1$, where $0<\beta<1$. These Pareto amplitudes have lower cutoff $1$ and tail index $\beta$: their power-law tail allows occasional large shocks, and $\beta<1$ implies an infinite mean. For $\gamma>0$, define
\begin{equation}
 Q(t)=E_j e^{-\gamma(t-T_j)},\qquad T_j\le t<T_{j+1}.
 \label{eq:shot_noise_burst}
\end{equation}
This resetting mechanism is particularly suited to settings involving changes in operating conditions, preparation of a fresh environment, or the termination of one activity episode followed by the start of another.
At each arrival, the activity resets to the new mark. This reset makes the subsequent cycle independent of the preceding cycles. {
On the marked configuration space, write $\omega=\sum_{j\in\Z}\delta_{(T_j,E_j)}$ and define time translation by
\[
 \theta_t\omega=\sum_{j\in\Z}\delta_{(T_j-t,E_j)},\qquad t\in\R.
\]
Let $F_E$ be the mark distribution. The configuration law is preserved by this shift because its intensity $\lambda\,\diff s\,F_E(\diff e)$ is translation invariant. Events determined by bounded time windows become independent when their windows are shifted far enough apart. Approximation by such events gives mixing, hence ergodicity, of the translation flow. If $Q(\omega)$ denotes the activity at time $0$, then $Q(\theta_t\omega)=Q(t)$ from \eqref{eq:shot_noise_burst}. The activity process therefore inherits stationarity and ergodicity as a measurable function of this flow.
}

Take $\tau_0=T_1$ and $\tau_k=T_{k+1}$ for $k\ge1$. The complete cycle durations
$\ell_k=T_{k+1}-T_k$ are i.i.d.\ exponential random variables of rate $\lambda$, independent of the marks, with $\mu_\tau=\lambda^{-1}$. The complete cycle segments are functions of the i.i.d.\ pairs $(E_k,\ell_k)$ and are independent of the initial segment. Moreover, $\tau_0<\infty$ almost surely. A pulse with initial amplitude $E_j$ contributes at most the area obtained by allowing it to decay indefinitely:
\[
 \int_0^\infty E_j e^{-\gamma s}\,\diff s=\frac{E_j}{\gamma}.
\]
Since $T_0\le0<T_1$, the initial segment $[0,T_1)$ is a portion of the pulse started at $T_0$. Since each Pareto mark is finite almost surely, extending its exponential decay beyond $T_1$ therefore gives
\[
 R_{\mathrm{init}}:=\int_0^{T_1}Q(s)\,\diff s
\le\int_{T_0}^\infty E_0e^{-\gamma(s-T_0)}\,\diff s
 =\frac{E_0}{\gamma}<\infty
 \quad\text{almost surely}.
\]
Similarly, summing these pulse-area bounds over the arrivals on $[0,H]$ yields
\[
 \int_0^H Q(s)\,\diff s
 \le\frac1\gamma\left(E_0+\sum_{j:\,0<T_j\le H}E_j\right)<\infty
 \quad\text{almost surely}.
\]
The last sum is finite because there are finitely many arrivals on $[0,H]$. Thus the local-finiteness and initialization requirements hold.

For each complete cycle,
\[
 R_k=E_kV_k,\qquad
 V_k=\frac{1-e^{-\gamma\ell_k}}{\gamma}\in(0,\gamma^{-1}).
\]
Although $R_k$ and $\ell_k$ are dependent, $E_k$ and $V_k$ are independent. Conditioning on $V_1$ and using the exact Pareto tail gives, for $x\ge\gamma^{-1}$,
\begin{equation}
 \Pp(R_1>x)
 =\E\!\left[\Pp(E_1>x/V_1\mid V_1)\right]
 =C_\beta x^{-\beta},
 \qquad C_\beta=\E[V_1^\beta]\in(0,\infty).
 \label{eq:breiman_shot_noise}
\end{equation}
This is the bounded-factor Pareto case of the product-tail principle in \cite{Breiman1965}. Explicitly,
\[
 C_\beta=\int_0^\infty
 \left(\frac{1-e^{-\gamma s}}{\gamma}\right)^\beta
 \lambda e^{-\lambda s}\,\diff s
 =\gamma^{-\beta}
 \frac{\Gamma(1+\lambda/\gamma)\Gamma(1+\beta)}
      {\Gamma(1+\lambda/\gamma+\beta)}.
\]
Consequently, Assumption~\ref{ass:level_C}(b) holds and one may take
$a_T=(C_\beta T)^{1/\beta}$. Theorem~\ref{thm:clock_limit} yields
$\mu^\varepsilon\Rightarrow\diff S^\beta$ and
$A^\varepsilon\mathrel{\xRightarrow{M_1}}S^\beta$, now with
$\kappa=\lambda\Gamma(1-\beta)$. The factor $C_\beta$, which contains the effect of the decay rate $\gamma$, is absorbed into the reward normalization $a_T$.

{
\paragraph{Model 2: An Ornstein--Uhlenbeck diffusion environment.}
We consider the random environment given by a stationary Ornstein--Uhlenbeck diffusion. The stable additive-functional limit in this example falls within the theory of B\'ethencourt \cite{Bethencourt2023}.

Let $\vartheta>0$ and let $Y$ be the two-sided stationary solution of
\begin{equation}
 \diff Y_s=-\vartheta Y_s\,\diff s+\sqrt{2\vartheta}\,\diff B_s^Y,
 \qquad
 Y_s=\sqrt{2\vartheta}\int_{-\infty}^s
       e^{-\vartheta(s-u)}\,\diff B_u^Y.
 \label{eq:ou_environment}
\end{equation}
Its invariant law is $\pi(\diff y)=(2\pi)^{-1/2}e^{-y^2/2}\,\diff y$, and
$\E[Y_sY_t]=e^{-\vartheta|t-s|}$. On $\Omega=C(\R;\R)$, write $Y_s(\omega)=\omega(s)$ and define
\[
 (\theta_t\omega)(s)=\omega(s+t),\qquad s,t\in\R.
\]
The stationary OU law on this path space is preserved by $\theta_t$. The decaying covariance makes Gaussian cylinder events asymptotically independent under widely separated shifts; approximation by cylinder events then gives mixing, hence ergodicity. Define
\begin{equation}
 Q(\omega)=e^{\omega(0)^2},\qquad Q(\theta_s\omega)=e^{Y_s^2}.
 \label{eq:ou_activity}
\end{equation}
Although $\E_\pi[Q]=\infty$, continuity of $Y$ gives, for every finite $H$,
\[
 \int_0^H e^{Y_s^2}\,\diff s
 \le H\exp\!\left(\sup_{0\le s\le H}|Y_s|^2\right)<\infty
 \qquad\text{almost surely}.
\]
Thus Assumption~\ref{ass:local_activity} holds. The boundedness requirements in Assumption~\ref{ass:slow_coefficients} concern the slow coefficients, not the environmental observable or the OU drift.

Fix $h>0$ and define complete cycles by successive passages $0\to h\to0$:
\begin{equation}
 \begin{split}
 \tau_0&=\inf\{s\ge0:Y_s=0\},\\
 \rho_k&=\inf\{s>\tau_{k-1}:Y_s=h\},\qquad
 \tau_k=\inf\{s>\rho_k:Y_s=0\},\quad k\ge1,\\
 \ell_k&=\tau_k-\tau_{k-1},\qquad
 R_k=\int_{\tau_{k-1}}^{\tau_k}e^{Y_s^2}\,\diff s.
 \end{split}
 \label{eq:ou_regeneration}
\end{equation}
Recurrence of the OU diffusion makes these times finite almost surely. The strong Markov property at the returns to $0$ makes the complete cycle segments i.i.d.\ and independent of the initial segment. Also,
$R_{\mathrm{init}}=\int_0^{\tau_0}e^{Y_s^2}\,\diff s<\infty$ almost surely by continuity on the finite initial interval.

Set
\[
 s(y)=\int_0^y e^{z^2/2}\,\diff z,\qquad s_h=s(h).
\]
For $H_a=\inf\{t\ge0:Y_t=a\}$, the hitting-time equations for
$\mathcal A_Y=\vartheta(\partial_{yy}-y\partial_y)$ give
\begin{align}
 \E_0H_h&=\frac1\vartheta\int_0^h e^{y^2/2}
                    \int_{-\infty}^y e^{-z^2/2}\,\diff z\,\diff y,\notag\\
 \E_hH_0&=\frac1\vartheta\int_0^h e^{y^2/2}
                    \int_y^\infty e^{-z^2/2}\,\diff z\,\diff y,\notag\\
 \mu_h:=\E\ell_1&=\E_0H_h+\E_hH_0
                =\frac{\sqrt{2\pi}}{\vartheta}s_h\in(0,\infty).
 \label{eq:ou_mean_cycle}
\end{align}
In particular, the mean cycle-duration requirement in Assumption~\ref{ass:level_C}(a) is satisfied.

The choice of activity in \eqref{eq:ou_activity} permits an exact reward calculation. Since $\mathcal A_Ys=0$, It\^o's formula, for a cycle started at $Y_0=0$, gives
\begin{equation}
 \diff s(Y_t)=\sqrt{2\vartheta}\,e^{Y_t^2/2}\,\diff B_t^Y,
 \qquad
 \langle s(Y)\rangle_t=2\vartheta\int_0^t e^{Y_u^2}\,\diff u.
 \label{eq:ou_scale_clock}
\end{equation}
This quadratic-variation clock is continuous, strictly increasing, and unbounded. The Dambis--Dubins--Schwarz representation therefore turns the passages $0\to h\to0$ into Brownian passages $0\to s_h\to0$. By the Brownian strong Markov property,
\[
 2\vartheta R_1\overset{\mathrm{law}}{=}H_{s_h}^B+\widetilde H_{s_h}^B,
\]
where the right-hand terms are independent copies of the first time a standard Brownian motion started at $0$ reaches $s_h$. Independence of the Brownian motion in the time-change representation from its clock is neither asserted nor needed. The first-passage transform
$\E[e^{-qH_{s_h}^B}]=e^{-s_h\sqrt{2q}}$ now yields
\begin{equation}
 \E[e^{-zR_1}]=\exp\!\left(-\frac{2s_h}{\sqrt\vartheta}\sqrt z\right),
 \qquad z\ge0.
 \label{eq:ou_reward_transform}
\end{equation}
Thus the complete-cycle reward itself has a positive $1/2$-stable law, with
\begin{equation}
 f_R(r)=\frac{s_h}{\sqrt{\pi\vartheta}}r^{-3/2}
             e^{-s_h^2/(\vartheta r)},\qquad
 \Pp(R_1>r)\sim C_h r^{-1/2},\qquad
 C_h=\frac{2s_h}{\sqrt{\pi\vartheta}}.
 \label{eq:ou_reward_tail}
\end{equation}
Consequently, Assumption~\ref{ass:level_C}(b) holds with $\beta=1/2$ and $a_T=(C_hT)^2$. Theorem~\ref{thm:clock_limit} gives
\begin{equation}
 \mu^\varepsilon\Longrightarrow\diff S,\qquad
 A^\varepsilon\xRightarrow{M_1}S,\qquad
 \E[e^{-zS_t}]=e^{-t\kappa_h\sqrt z},\qquad
 \kappa_h=\frac{\sqrt\pi}{\mu_h}=\frac{\vartheta}{\sqrt2\,s_h}.
 \label{eq:ou_clock_limit}
\end{equation}
Here $\beta=1/2$, $\mu_\tau=\mu_h$, $\kappa=\kappa_h$, and $S=S^{1/2}$. The corresponding rate and clock are
\begin{equation}
 r_t^\varepsilon=\frac{\varepsilon}{C_h^2}e^{Y_{t/\varepsilon}^2},
 \qquad
 A_t^\varepsilon=\frac{\varepsilon^2}{C_h^2}
                      \int_0^{t/\varepsilon}e^{Y_s^2}\,\diff s.
 \label{eq:ou_activity_scaling}
\end{equation}
The auxiliary level $h$ fixes the reward normalization. Equivalently, the unnormalized rate $\widehat r_t^\varepsilon=\varepsilon e^{Y_{t/\varepsilon}^2}$ gives $\widehat A^\varepsilon=C_h^2A^\varepsilon$ and a limiting clock with Laplace exponent $\sqrt{2\vartheta/\pi}\sqrt z$, independent of $h$.

This additive-functional limit is also a specialization of B\'ethencourt's diffusion theorem \cite{Bethencourt2023}. In his notation, take $\sigma(y)=\sqrt{2\vartheta}$, $f(y)=e^{y^2}$, stable index $1/2$, and slowly varying function identically one. The power of $|s(y)|$ in his scale-function condition is then zero, and
\[
 [\sigma(y)s'(y)]^{-2}f(y)
 =\frac{1}{2\vartheta}.
\]
After removal of the finite initial segment, that theorem gives the finite-dimensional limit of $\widehat A^\varepsilon$ with the same normalization and Laplace exponent. Nondecreasing paths allow the usual upgrade to $M_1$ convergence \cite{Whitt2002}, with the corresponding measure convergence obtained as above. The regeneration calculation here also identifies the exact complete-cycle transform \eqref{eq:ou_reward_transform} and its tail constant \eqref{eq:ou_reward_tail}.

The stationary marginal and complete-cycle tails have different slowly varying factors:
\[
 \Pp(e^{Y_0^2}>r)\sim\sqrt{\frac2\pi}\,
                  r^{-1/2}(\log r)^{-1/2},
 \qquad
 \Pp(R_1>r)\sim C_h r^{-1/2}.
\]
The first equivalence follows from the Gaussian tail at $\sqrt{\log r}$. It cannot replace the integrated-reward calculation in \eqref{eq:ou_reward_tail}.

Both environments satisfy the environmental hypotheses of Theorem~\ref{thm:full_homogenization}, with $\kappa=\lambda\Gamma(1-\beta)$ for resetting pulses and $\kappa=\kappa_h$ for the OU environment under their respective reward normalizations. The first produces arbitrary $\beta\in(0,1)$ from external reset amplitudes; the second produces $\beta=1/2$ from excursions of a continuous Gaussian environment.
}

\section{Heavy-tailed homogenization and effective dynamics}
\label{sec:model}
\label{sec:assumptions}
\label{sec:main_theorems}

We first establish abstract semigroup homogenization given the stable environmental clock. When the activity diffusion is uniformly elliptic, the second main theorem identifies the limiting dynamics through a concrete stable-like jump kernel. We also give the effective evolution equation and compare the result with finite-mean averaging.

\subsection{Assumptions and abstract semigroup homogenization}
\label{sec:consequences}

Recall the microscopic SDE \eqref{eq:micro_sde}: the background diffusion evolves in physical time, while the activity diffusion is accelerated at the rate
\[
 r_t^\varepsilon
 =\frac{Q(\theta_{t/\varepsilon}\omega)}{\varepsilon a_{\varepsilon^{-1}}}.
\]
Its accumulated activity is $A^\varepsilon$ from \eqref{eq:cumulative_clock}. The environmental assumptions in Section~\ref{subsec:random_environment} yield its stable limit through Theorem~\ref{thm:clock_limit}; we need only supplement these environmental assumptions with conditions on the slow coefficients and the diffusion noises.

\begin{assumption}[Bounded Lipschitz coefficients]
\label{ass:slow_coefficients}
Let $b_i:\R^d\to\R^d$ and $\sigma_i:\R^d\to\R^{d\times d_i}$ for $i=0,1$.
We assume that the functions $b_0,b_1,\sigma_0,\sigma_1$ are globally bounded and globally Lipschitz. The two Brownian motions have dimensions $d_0,d_1$ and are mutually independent.
\end{assumption}
Set $a_i:=\sigma_i\sigma_i^\top$. The background and activity operators are
\begin{equation}
 \Lop_i f=b_i\cdot\nabla f+\tfrac12 a_i:D^2f,
 \qquad i=0,1.
 \label{eq:elliptic_activity}
\end{equation}
The autonomous diffusions with coefficients $b_i,\sigma_i$ generate conservative Feller semigroups $P^{(i)}$ on $C_0(\R^d)$. It\^o's formula gives $\D=C_c^\infty(\R^d)\subset D(\Lop_i)$ with the actions $\Lop_0,\Lop_1$. The semigroups $P^{(0)}$ and $P^{(1)}$ are not assumed to commute.

\begin{assumption}[Independence of the noises]
\label{ass:decoupling}
The entire environmental process is independent of $(W^0,W^1)$. This makes the conditional microscopic equation an SDE with a prescribed, locally integrable activity rate.
\end{assumption}

Under Assumptions~\ref{ass:slow_coefficients}, \ref{ass:local_activity}, \ref{ass:decoupling}, and~\ref{ass:level_C}, the microscopic SDE \eqref{eq:micro_sde} has a unique nonexplosive strong solution with continuous paths for every deterministic initial state $x\in\R^d$.
Define the extended-time  parametrization of the microscopic path by
\[
 F^\varepsilon(t)=t+A_t^\varepsilon,\qquad
 \tau^\varepsilon(u)=\inf\{t\ge0:F^\varepsilon(t)>u\},\qquad
 Z_u^{\varepsilon,x}=X_{\tau^\varepsilon(u)}^{\varepsilon,x}.
\]
\begin{figure}[H]
\centering
\begin{tikzpicture}[font=\scriptsize,>=Stealth]
\begin{scope}[x=1.25cm,y=0.85cm]
 \node[font=\small] at (1.6,4.85) {(a) Physical to extended time};
 \fill[Goldenrod!12] (1.45,0) rectangle (1.65,4.35);
 \draw[->] (0,0)--(3.15,0) node[right] {$t$};
 \draw[->] (0,0)--(0,4.5) node[left] {$u$};
 \draw[gray,densely dotted] (0,1.55)--(1.55,1.55);
 \draw[gray,densely dotted] (0,2.95)--(1.55,2.95);
 \node[left] at (0,1.55) {$u_-$};
 \node[left] at (0,2.95) {$u_+$};
 \draw[BrickRed,dashed,line width=0.9pt]
   (0,0)--(1.55,1.55)--(1.55,2.95)--(2.75,4.15);
 \draw[RoyalBlue,line width=1.2pt]
   (0,0)--(1.45,1.45)--(1.65,3.05)--(2.75,4.15);
 \draw (1.55,0.06)--(1.55,-0.06) node[below] {$t_0$};
 \node[RoyalBlue,anchor=west] at (1.95,2.8) {$F^\varepsilon(t)$};
 \node[BrickRed,anchor=west] at (1.95,2.15) {$F(t)=t+S_t^\beta$};
 \draw[decorate,decoration={brace,amplitude=3pt},Goldenrod!70!black]
   (1.65,-0.6)--(1.45,-0.6);
 \node[below,align=center] at (1.55,-0.7) {Short physical burst};
\end{scope}
\begin{scope}[shift={(7.1cm,0)},x=1.1cm,y=1.15cm]
 \node[font=\small] at (2.05,3.59) {(b) Extended to physical time};
 \fill[Goldenrod!12] (1.55,0) rectangle (2.95,3.15);
 \draw[->] (0,0)--(4.55,0) node[right] {$u$};
 \draw[->] (0,0)--(0,3.3) node[left] {$t$};
 \draw[gray,densely dotted] (0,1.55)--(1.55,1.55);
 \node[left] at (0,1.55) {$t_0$};
 \draw[gray,densely dotted] (1.55,0)--(1.55,1.55);
 \draw[gray,densely dotted] (2.95,0)--(2.95,1.55);
 \node[below] at (1.55,0) {$u_-$};
 \node[below] at (2.95,0) {$u_+$};
 \draw[RoyalBlue,line width=1.2pt]
   (0,0)--(1.45,1.45)--(3.05,1.65)--(4.15,2.75);
 \draw[BrickRed,dashed,line width=0.9pt]
   (0,0)--(1.55,1.55)--(2.95,1.55)--(4.15,2.75);
 \node[RoyalBlue] at (2.2,2.15) {$\tau^\varepsilon(u)$};
 \node[BrickRed,align=center] at (2.25,0.8) {$\tau(u)=t_0$\\Physical time frozen};
 \draw[decorate,decoration={brace,amplitude=3pt},Goldenrod!70!black]
   (2.95,-0.45)--(1.55,-0.45);
 \node[below,align=center] at (2.25,-0.52) {Expanded activity interval};
\end{scope}
\end{tikzpicture}
\caption{Schematic of the extended-time parametrization around a single burst.
Solid blue curves show the continuous microscopic clock and its inverse;
dashed red curves show the limiting clock's completed graph and its inverse.
Here $u_-=t_0+S_{t_0-}^\beta$ and $u_+=t_0+S_{t_0}^\beta$.
On $[u_-,u_+]$, physical time is frozen while $Z^x$ follows the activity
diffusion for duration $u_+-u_-=\Delta S_{t_0}^\beta$.
Microscopically, $Z_u^{\varepsilon,x}=X_{\tau^\varepsilon(u)}^{\varepsilon,x}$
traverses the same spatial path with a new time parameter.}
\label{fig:extended_time_parametrization}
\end{figure}
Let $\tau$ be the inverse clock in \eqref{eq:extended_clocks} associated with $S^\beta$, and let $Z^x$ solve
\begin{equation}\label{eq:extended_sde}
    Z_u^{x}=x +\int_0^u b_0(Z_v^{x})\,\diff\tau(v)  +\int_0^u \sigma_0(Z_v^{x})\,\diff B^0_{\tau(v)}
     +\int_0^u b_1(Z_v^{x})\,\diff c(v)
     +\int_0^u \sigma_1(Z_v^{x})\,\diff B^1_{c(v)} .
\end{equation}
with Brownian motions independent of $S^\beta$. 
Define the candidate physical-time limit by $\bar X_t^x=Z_{t+S_t^\beta}^x$. We equip $C_{\mathrm{loc}}([0,\infty);\R_+\times\R^d)$ with uniform convergence on bounded extended-time intervals. Set $P_t^\varepsilon f(x)=\E[f(X_t^{\varepsilon,x})]$, where the expectation averages over the environment and the diffusion noises. The parameters $\beta$, $\nu_\beta$, and $\kappa=\Gamma(1-\beta)/\mu_\tau$ are supplied by Theorem~\ref{thm:clock_limit}; fractional powers on $\D$ are defined by the Phillips formula.

{
Let $V^x$ be the autonomous activity diffusion with generator $\Lop_1$:
\[
 V_r^x=x+\int_0^r b_1(V_s^x)\,\diff s
          +\int_0^r\sigma_1(V_s^x)\,\diff W_s^1,
 \qquad r\ge0.
\]
Assumption~\ref{ass:slow_coefficients} guarantees a unique nonexplosive strong solution, with transition probabilities and semigroup $P_r^{(1)}f(x)=\E[f(V_r^x)], f\in C_0(\R^d).$
}
An activity atom of mass $r$ induces the diffusion transition $P_r^{(1)}$, which is combined in time order with the background evolution; this mechanism underlies the effective generator in the following theorem.

\begin{theorem}[Homogenization limit]
\label{thm:full_homogenization}
Under Assumptions~\ref{ass:slow_coefficients}, \ref{ass:local_activity}, \ref{ass:decoupling}, and~\ref{ass:level_C}, for every $x\in\R^d$, the process $\bar X^x$ defined above is a Markov process with Feller semigroup $\bar P$ and generator $\overline{\Lop}$. As $\varepsilon\downarrow0$:
\begin{enumerate}[label=(\roman*)]
\item Under the annealed law $\Pp\otimes\Pp_W$,
\begin{equation}\label{eq:main_fdd_limit}
 X^{\varepsilon,x}\xrightarrow{\mathrm{f.d.d.}}\bar X^x.
\end{equation}
\item For every $t\ge0$ and $f\in C_0(\R^d)$,
\begin{equation} \label{eq:main_operator_limits}
 \|P_t^\varepsilon f-\bar P_tf\|_\infty\longrightarrow0.
\end{equation}
\item The generator satisfies $C_c^\infty(\R^d)\subset D(\overline{\Lop})$ and, for every $f$ in this class,
\begin{equation}\label{eq:full_effective_gen}
 \overline{\Lop}f  =\Lop_0f-\kappa(-\Lop_1)^\beta f.
\end{equation}
\end{enumerate}
\end{theorem}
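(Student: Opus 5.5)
The plan is to work entirely in extended time $u=t+A_t^\varepsilon$, where all clocks are $1$-Lipschitz, and return to physical time only at the end.

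\textbf{Step 1: the extended-time SDE.} Set $\tau^\varepsilon$ as in the definition and $c^\varepsilon(u)=u-\tau^\varepsilon(u)$, so that $c^\varepsilon(u)=A^\varepsilon_{\tau^\varepsilon(u)}$. Both $\tau^\varepsilon$ and $c^\varepsilon$ are nondecreasing and $1$-Lipschitz. A time change of \eqref{eq:micro_sde} by the continuous strictly increasing map $F^\varepsilon$ shows that $Z^{\varepsilon,x}$ solves \eqref{eq:extended_sde} with $(\tau,c)$ replaced by $(\tau^\varepsilon,c^\varepsilon)$. The Brownian motions are also replaced by $\widetilde B^0,\widetilde B^1$. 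These are again independent Brownian motions independent of the environment, by Assumption~\ref{ass:decoupling}: conditionally on $\omega$, $\int \sqrt{r^\varepsilon}\,\diff W^1$ has quadratic variation $A^\varepsilon$, which gives a Dambis--Dubins--Schwarz representation $\widetilde B^1_{A^\varepsilon_t}$, and similarly for $W^0$.

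\textbf{Step 2: convergence of the clocks.} Theorem~\ref{thm:clock_limit} gives $A^\varepsilon\Rightarrow S^\beta$ in $M_1$. The map from $A$ to the inverse of $t+A_t$ is continuous from $M_1$ into locally uniform convergence, because the completed graphs converge. Hence $(\tau^\varepsilon,c^\varepsilon)\Rightarrow(\tau,c)$ in $C_{\mathrm{loc}}$. By Skorokhod representation I will take this convergence almost surely, with the Brownian motions fixed.

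\textbf{Step 3: stability of the SDE in its clocks (Khasminskii discretization).} This is the main obstacle. I have to show that the solution map $(\tau,c)\mapsto Z$ of \eqref{eq:extended_sde} is continuous in probability, uniformly on $[0,U]$, under uniform convergence of $1$-Lipschitz clocks. The plan is as follows.
\begin{itemize}
\item Take a mesh of size $h$ and freeze the coefficients at mesh points, giving an Euler scheme. Its error from the true solution is $O(h^{1/2})$ in $L^2$ uniformly in the clocks. This uses BDG and Gronwall, with $d\langle B_{\tau}\rangle=\diff\tau\le \diff u$, and Lipschitz bounded coefficients.
\item The Euler scheme is an explicit continuous functional of $(\tau,c,B^0,B^1)$ evaluated at finitely many points.
\item Hence the schemes converge for fixed $h$, and letting $h\to0$ gives $Z^{\varepsilon,x}\to Z^x$ locally uniformly in probability.
\end{itemize}
The same estimates give Lipschitz and Feller continuity in $x$, with $\E|Z^x_u-Z^y_u|^2\le C_Ue^{C_U}|x-y|^2$.

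\textbf{Step 4: back to physical time, part (i).} Evaluate at the random times $t+A^\varepsilon_t$, which converge to $t+S^\beta_t$ for fixed $t$, since $S^\beta$ has no fixed atoms. Locally uniform convergence of $Z$ together with continuity of $Z$ then gives the f.d.d.\ convergence (i).

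\textbf{Step 5: Markov property and semigroup convergence, part (ii).} The Markov property of $\bar X$ follows from independent increments of $S^\beta$ and the strong Markov property of the Brownian drivers at the stopping time $t+S_t^\beta$ in extended time. Conditionally on $S^\beta$ the equation decouples: on jump intervals $\tau$ is frozen and $Z$ runs $P^{(1)}$ for the jump size, and between jumps one gets the interleaved products. Restarting at $t$ gives $\bar P_{t+s}=\bar P_t\bar P_s$.

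For the Feller property, the $L^2$-continuity in $x$ gives $\bar P_tf\in C_b$. Vanishing at infinity comes from bounded coefficients: $\sup_x\Pp(|\bar X^x_t-x|>R)\to0$, using $\E\, (S_t^\beta\wedge K)$-type control together with the tail of $S_t^\beta$. Strong continuity at $0$ follows similarly. The uniform-in-$x$ version of these estimates upgrades Step 4 to $\|P^\varepsilon_tf-\bar P_tf\|_\infty\to0$. I will do this by covering a large compact set of $x$ with finitely many points, using equicontinuity in $x$ uniform in $\varepsilon$, and using uniform tightness of displacement outside the compact.

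\textbf{Step 6: identifying the generator, part (iii).} For $f\in\D$, apply Itô's formula to $f(Z_u)$:
\[
\E f(\bar X_t)-f(x)=\E\int_0^{t+S_t}\bigl(\Lop_0f\,\diff\tau+\Lop_1f\,\diff c\bigr).
\]
The $\diff\tau$ part contributes $t\,\Lop_0f(x)+o(t)$.

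For the $\diff c$ part, decompose by jumps of $S^\beta$ up to time $t$ using the Poisson structure: each jump of size $r$ contributes $\E[P^{(1)}_rf-f]$ at the pre-jump position. The compensation formula gives
\[
t\int_0^\infty (P^{(1)}_rf-f)(x)\,\nu_\beta(\diff r)+o(t).
\]
The integrand is bounded by $r\|\Lop_1f\|_\infty\wedge2\|f\|_\infty$ via Lemma~\ref{ass:semigroup_lipschitz}, and uniformity in $x$ uses the $C_0$-continuity of $P^{(1)}_r f$. Cross effects between the two parts are $o(t)$, by the small-jump mass estimate \eqref{eq:prelim_small_jump_mass} and the Lipschitz bounds.

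Dividing by $t$ and applying \eqref{eq:fractional_power_phillips} yields \eqref{eq:full_effective_gen}, with the limit uniform in $x$, so that $\D\subset D(\overline{\Lop})$.
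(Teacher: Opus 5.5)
Your proposal is correct. Steps 1--4 are essentially the paper's argument: the extended-time representation (Lemma~\ref{lem:physical_extended_representation}), convergence of the inverse clocks, Euler/Khasminskii stability uniform over $1$-Lipschitz clocks (Proposition~\ref{prop:extended_stability}), and evaluation at the endpoints $t+A_t^\varepsilon$. The paper's stability estimate already carries $\sup_x$, so your covering argument for (ii) is harmless but unnecessary.

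The genuine difference is in how you obtain the Markov property and the generator. The paper never computes with $\bar X$ directly. It builds the truncated semigroups $\bar P^\eta$ of Proposition~\ref{thm:truncated_evolution}, with the time-ordered products $\cP_{\mathbf T,\mathbf J}(t)$ and the bounded-perturbation generator $\Lop_\eta$. It then proves $\bar P^\eta_t\to\bar P_t$ strongly by reapplying the stability proposition to $S^\eta\to S$. From this it inherits the semigroup law, the Markov property, and the identity $\bar P_tf-f=\int_0^t\bar P_sLf\,\diff s$ from $\bar P^\eta_tf-f=\int_0^t\bar P^\eta_s\Lop_\eta f\,\diff s$, using $\|\Lop_\eta f-Lf\|_\infty\to0$.

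You instead restart at the extended time $t+S_t$: the future clock increments and the Brownian increments after $t$ and after $S_t$ are independent of the past. You then identify the generator by It\^o's formula in extended time plus the compensation formula for the jump measure of $S$. Your route is shorter and bypasses the truncated semigroup. Your ``interleaved products between jumps'' only make sense after such a truncation, but your restart argument does not need them. The paper's route, in exchange, never has to place the infinite-mean clock inside an expectation.

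That is the one point in your Step 6 that needs care. The displayed identity $\E f(\bar X_t)-f(x)=\E\int_0^{t+S_t}(\Lop_0f\,\diff\tau+\Lop_1f\,\diff c)$ is not justified as written. The $\diff c$-integral is only bounded by $\|\Lop_1f\|_\infty S_t$, and $\E S_t=\infty$. The fix has three parts:
\begin{enumerate}
\item First condition on the clock. Each jump interval then contributes the conditional expectation of $(P^{(1)}_{\Delta S_s}f-f)(\bar X_{s-})$, and the countable sum is dominated by the clock-measurable quantity $\|\Lop_1f\|_\infty S_t$.
\item Only then take expectations, using $|P^{(1)}_rf-f|\le r\|\Lop_1f\|_\infty\wedge 2\|f\|_\infty$.
\item Apply the compensation formula in a physical-time filtration such as $\sigma(S_r,B^0_r:r\le s)\vee\sigma(B^1_{r\wedge S_s}:r\ge0)$, where the jump measure of $S$ is still Poisson. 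Do not use the clock-augmented filtration from the It\^o step.
\end{enumerate}
This yields exactly $\bar P_tf-f=\int_0^t\bar P_sLf\,\diff s$, with no cross terms to estimate. Strong continuity of $\bar P$ and $Lf\in C_0(\R^d)$ then finish (iii).
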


\begin{remark}
The operators $P_t^\varepsilon$ need not form a semigroup, so their Laplace-time integrals are not asserted to be resolvents; the limiting Laplace transform is the resolvent of $\overline{\Lop}$. Furthermore, \eqref{eq:main_operator_limits}
$\|P_t^\varepsilon f-\bar P_tf\|_{\infty}\le2\|f\|_{\infty}$ yield the Laplace-time convergence,
\begin{equation}
 \left\|\int_0^\infty e^{-\lambda t}P_t^\varepsilon f\,\diff t
       -(\lambda I-\overline{\Lop})^{-1}f\right\|_{\infty}
 \le\int_0^\infty e^{-\lambda t}
          \|P_t^\varepsilon f-\bar P_tf\|_{\infty}\,\diff t\longrightarrow0,
 \label{eq:proved_resolvent_convergence}
\end{equation}
for every $\lambda>0$ and $f\in C_0(\R^d)$.
\end{remark}

\begin{remark}[Path-space convergence]
\label{sec:topology}
The proof of Theorem~\ref{thm:full_homogenization} also gives extended-time functional convergence:
\begin{equation}
 (\tau^\varepsilon,Z^{\varepsilon,x})\Longrightarrow(\tau,Z^x)
 \quad\text{in }C_{\mathrm{loc}}([0,\infty);\R_+\times\R^d).
 \label{eq:main_extended_path_limit}
\end{equation}
However, this does not assert $J_1$ or $M_1$ convergence of the ordinary physical-time paths. To see this, consider the regenerative Brownian motion specialization $\Lop_0=0$, $\Lop_1=\tfrac12\Delta$, a burst with operational duration bounded away from zero occurs with probability bounded away from zero, while its physical duration vanishes. Conditional on such a burst, a Brownian coordinate has a fixed positive probability of making an internal excursion a fixed distance outside the segment joining two surrounding values, so the $M_1$ oscillation modulus remains nonzero on arbitrarily short physical-time intervals. This violates the $M_1$ tightness criterion \cite{Whitt2002} and consequently rules out both $M_1$ and $J_1$ convergence of the physical-time paths. This latter obstruction concerns the stated Brownian realization, not every activity semigroup. The nondecreasing clocks themselves do converge in $M_1$; their monotone-path criterion does not extend to the slow state. The parametrized limit retains the internal diffusion trajectory: on an interval $[t+S_{t-}^\beta,t+S_t^\beta]$ corresponding to a clock jump, $\tau$ is constant while $Z^x$ evolves through the activity diffusion. Thus \eqref{eq:main_extended_path_limit} is compatible with the physical-time $t$ obstruction.
\end{remark}

\subsection{Homogenization to generalized stable-like dynamics}
\label{subsec:stable_like_generator}

We now describe the spatial structure of the effective generator. Throughout this subsection, set $\alpha:=2\beta\in(0,2)$ and retain $\kappa=\Gamma(1-\beta)/\mu_\tau$ from Theorem~\ref{thm:clock_limit}.

The following assumption ensures that the activity diffusion has a transition density with Gaussian two-sided bounds on finite time intervals, yielding the local stable-like bounds for the effective jump kernel.

\begin{assumption}[Uniform ellipticity]
\label{ass:uniform_ellipticity}
There exist constants $0<\lambda_*\le\Lambda_*<\infty$ such that
\begin{equation}
 \lambda_*|\xi|^2\le \xi^\top a_1(x)\xi\le\Lambda_*|\xi|^2,
 \qquad x,\xi\in\R^d.
 \label{eq:uniform_elliptic_activity}
\end{equation}
\end{assumption}

Under Assumptions~\ref{ass:slow_coefficients} and~\ref{ass:uniform_ellipticity}, the transition probabilities of the activity diffusion $V^x$ defined above admit a density $p_r^{(1)}(x,y)$ with respect to Lebesgue measure, so that
\[
 P_r^{(1)}f(x) =\int_{\R^d}f(y)p_r^{(1)}(x,y)\,\diff y, \qquad f\in C_0(\R^d).
\]
The notation $\mathcal A_{\alpha}$, $j_{\alpha}$, and $\ell_{\alpha}$ below suppresses the fixed dependence on $\Lop_1$ and $\kappa$. We define the nonlocal activity operator
\begin{equation}\label{eq:generalized_stable_definition}
 \mathcal A_{\alpha}f:=-\kappa(-\Lop_1)^{\alpha/2}f
 =\int_0^\infty(P_r^{(1)}f-f)\,\nu_\beta(\diff r),
 \qquad f\in\D.
\end{equation} 
The limit operator $\mathcal A_{\alpha}$ is an $\alpha$-stable-like activity operator, and has the following representation:
\begin{equation}\label{eq:generalized_stable_spatial}
\begin{aligned}
 \mathcal A_{\alpha}f(x) =\ell_{\alpha}(x)\cdot\nabla f(x) +\int_{\R^d\setminus\{x\}}\bigl[f(y)-f(x)- (y-x)\ind_{\{|y-x|\le1\}}\cdot\nabla f(x)\bigr]j_{\alpha}(x,y)\,\diff y,
\end{aligned}
\end{equation}
The drift is defined by the iterated integral
\begin{equation}\label{eq:generalized_stable_drift}
 \ell_{\alpha}(x) =\int_0^\infty \left[\int_{\R^d\setminus\{x\}} (y-x)\ind_{\{|y-x|\le1\}}p_r^{(1)}(x,y)\,\diff y\right] \nu_\beta(\diff r)\textcolor{red}{,}
\end{equation}
and its jump density is given by
\begin{equation}\label{eq:generalized_stable_kernel}
 j_{\alpha}(x,y)
 =\int_0^\infty p_r^{(1)}(x,y)\,\nu_\beta(\diff r),
 \qquad x\ne y.
\end{equation}
The inner integral over $y$ is evaluated first; cancellation in this integral ensures convergence, as verified in Section~\ref{subsec:fractional_topology}.

\begin{theorem}[Stable-like homogenization limit]
\label{cor:fractional_homog}
Under Assumptions~\ref{ass:slow_coefficients}, \ref{ass:local_activity}, \ref{ass:decoupling}, \ref{ass:level_C}, and~\ref{ass:uniform_ellipticity}, the microscopic diffusion homogenizes to a conservative Feller process with generalized $\alpha$-stable-like activity, where $\alpha=2\beta$. For every deterministic initial state, the convergence in Theorem~\ref{thm:full_homogenization}(i)--(ii) and the extended-time convergence \eqref{eq:main_extended_path_limit} hold. The effective generator is
\begin{equation}
 \overline{\Lop}f=\Lop_0f+\mathcal A_{\alpha}f
 =\Lop_0f-\kappa(-\Lop_1)^{\alpha/2}f,\qquad f\in\D.
 \label{eq:fractional_generator}
\end{equation}
The activity operator admits \eqref{eq:generalized_stable_spatial}, and its jump density satisfies, for constants $c,C>0$ independent of $x,y$,
\begin{equation}
 c|x-y|^{-d-\alpha}\le j_{\alpha}(x,y)
 \le C|x-y|^{-d-\alpha},\qquad 0<|x-y|\le1,
 \label{eq:local_stable_kernel_bounds}
\end{equation}
and $\sup_x\int_{\R^d\setminus\{x\}}(1\wedge|y-x|^2)j_{\alpha}(x,y)\,\diff y<\infty$.
\end{theorem}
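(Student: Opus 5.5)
Theorem~\ref{thm:full_homogenization} already supplies the Feller limit, the convergence statements (i)--(ii), the extended-time convergence \eqref{eq:main_extended_path_limit}, and the identity $\overline{\Lop}f=\Lop_0f-\kappa(-\Lop_1)^\beta f$ on $\D$. The new content is therefore the spatial representation \eqref{eq:generalized_stable_spatial} of the Phillips integral \eqref{eq:generalized_stable_definition}, together with the kernel estimates. The main analytic input is the classical Aronson-type two-sided Gaussian bound for the uniformly elliptic, bounded Lipschitz activity diffusion, valid on finite time intervals $0<r\le 1$:
\[
 c_1r^{-d/2}e^{-C_1|x-y|^2/r}\le p_r^{(1)}(x,y)\le C_2r^{-d/2}e^{-c_2|x-y|^2/r}.
\]
For $r\ge1$ I would use only that $p_r^{(1)}(x,\cdot)$ is a probability density. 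I would cite this bound rather than reprove it.

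\textbf{Kernel bounds.} Split $j_\alpha(x,y)=\int_0^1+\int_1^\infty$. The tail part is controlled for $|x-y|\le1$ as follows. Chapman--Kolmogorov gives $p_r^{(1)}(x,y)=\int p_{r-1/2}^{(1)}(x,z)p_{1/2}^{(1)}(z,y)\,\diff z\le\sup p_{1/2}^{(1)}\le C$ for $r\ge1$, and $\nu_\beta((1,\infty))<\infty$. Hence this part is bounded by a constant, which is $\le C|x-y|^{-d-\alpha}$ when $|x-y|\le1$.

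For the short-time part, substitute $r=|x-y|^2s$ in the Gaussian upper bound to get
\[
 \int_0^1 r^{-d/2-1-\beta}e^{-c_2|x-y|^2/r}\,\diff r\le|x-y|^{-d-2\beta}\int_0^\infty s^{-d/2-1-\beta}e^{-c_2/s}\,\diff s,
\]
which is finite. For the lower bound, restrict to $r\in[|x-y|^2,1]$, a nonempty range since $|x-y|\le 1$, and use the Gaussian lower bound. This produces $c\int_1^{|x-y|^{-2}}s^{-d/2-1-\beta}\,\diff s\,|x-y|^{-d-\alpha}$. The integral over $s$ is bounded below because $|x-y|^{-2}\ge1$; to get a strictly positive constant, I would instead integrate over $r\in[|x-y|^2/2,|x-y|^2]$ and use $r\le1$.

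\textbf{Integrability.} The moment bound $\sup_x\int(1\wedge|y-x|^2)j_\alpha(x,y)\,\diff y<\infty$ follows from Tonelli. The Gaussian upper bound gives $\int(1\wedge|y-x|^2)p_r^{(1)}(x,y)\,\diff y\le C(r\wedge1)$ for $r\le1$, and the same quantity is $\le1$ trivially for $r>1$. Since $\int(r\wedge1)\,\nu_\beta(\diff r)<\infty$, the claim follows.

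\textbf{Spatial representation.} Fix $f\in\D$ and write
\[
 P_r^{(1)}f(x)-f(x)=\int\bigl[f(y)-f(x)-(y-x)\ind_{\{|y-x|\le1\}}\cdot\nabla f(x)\bigr]p_r^{(1)}(x,y)\,\diff y+m_r(x)\cdot\nabla f(x),
\]
where $m_r(x)=\int(y-x)\ind_{\{|y-x|\le1\}}p_r^{(1)}(x,y)\,\diff y$.

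\begin{itemize}
\item The bracket is bounded by $C(1\wedge|y-x|^2)$, so by the integrability estimate the first term is absolutely integrable in $(r,y)$ against $\nu_\beta(\diff r)\,\diff y$. Fubini then yields the nonlocal term with kernel $j_\alpha$.
\item The drift $\ell_\alpha(x)=\int m_r(x)\,\nu_\beta(\diff r)$ requires $|m_r(x)|\le Cr$ for $r\le1$. This is the cancellation step, and I expect it to be the main obstacle, since the crude Gaussian bound only gives $|m_r|\le C\sqrt r$, which fails to be integrable against $r^{-1-\beta}$ when $\beta\ge1/2$.
\item I would obtain the cancellation from It\^o's formula for $V^x$ rather than from the density. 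Write $\E[V_r^x-x]=\E\int_0^rb_1(V_s^x)\,\diff s$, which is $O(r)$ by boundedness of $b_1$. Then estimate the truncation correction $\E[|V_r^x-x|\ind_{\{|V_r^x-x|>1\}}]$ by Gaussian tail moments of the diffusion increment, for example BDG with bounded $\sigma_1$, which gives $O(r^k)$ for any $k$. Together these give $|m_r(x)|\le Cr$ uniformly in $x$.
\end{itemize}

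For $r>1$, $|m_r|\le1$ is integrable against $\nu_\beta$. Combining the two pieces gives \eqref{eq:generalized_stable_spatial}, and hence \eqref{eq:fractional_generator} via Theorem~\ref{thm:full_homogenization}(iii).
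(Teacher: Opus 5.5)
Your proposal is correct and follows essentially the same route as the paper: Gaussian bounds split at $r=1$ with Chapman--Kolmogorov for large $r$, the scaling $r\sim|x-y|^2$ for both kernel bounds, Tonelli for the moment bound, and an It\^o-based $O(r)$ drift cancellation followed by Fubini. The differences are cosmetic. You get the cancellation from $\E[V_r^x-x]=\E\int_0^r b_1(V_s^x)\,\diff s$ plus a tail estimate, and the second-moment bound from the Gaussian upper bound. The paper instead applies It\^o's formula to a smooth compactly supported truncation $h_1$ and compares truncations via $C(1\wedge|z|^2)$, and it uses It\^o's isometry for the second-moment bound.
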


The microscopic dynamics is local and continuous, while the nonlocal activity term emerges from the concentration of diffusive activity into vanishing time intervals.

\begin{remark}[Stable subordination and the Brownian case]
\label{rem:stable_brownian_specialization}
For $\Lop_1=\tfrac12\Delta$, the effective generator is $\Lop_0-\kappa2^{-\beta}(-\Delta)^\beta$. When $\Lop_0=0$, the limit is the symmetric $\alpha$-stable process $x+B_{S_t^\beta}$, with
\begin{equation}
 \overline{\Lop}=-\kappa\,2^{-\alpha/2}(-\Delta)^{\alpha/2},
 \qquad
 \E_0[e^{i\xi\cdot\bar X_t}]
 =\exp(-t\kappa2^{-\alpha/2}|\xi|^\alpha).
 \label{eq:fractional_laplacian}
\end{equation}
More generally, when $\Lop_0=0$, the limit is the activity diffusion at an independent stable clock; it need not be a symmetric stable L\'evy process.
\end{remark}

The following comparison situates the effective generator within the broader dichotomy between deterministic and heavy-tailed activity clocks.

\begin{remark}[Finite-mean versus heavy-tailed activity]
\label{thm:phase_transition}
Table~\ref{tab:phase_transition} compares the effective operators associated with different clock limits. The finite-mean row uses $a_T=T$, whereas the stable row uses the normalization in Assumption~\ref{ass:level_C}. The path descriptions refer to diffusion realizations with nontrivial activity; the jump limits need not have stationary independent increments.

\begin{table}[htbp]
\centering
\small
\renewcommand{\arraystretch}{1.3}
\resizebox{\linewidth}{!}{%
\begin{tabular}{lcccc}
\toprule
\textbf{Activity Regime} & \textbf{Activity Condition} & \textbf{Operational Clock} $A^\varepsilon$ & \textbf{Effective Generator} $\Lop_{\mathrm{eff}}$ & \textbf{Path Class} \\
\midrule
\textbf{Deterministic Averaging} & $Q \in L^1(\Pp)$, $\bar q = \E[Q] < \infty$ & $\bar q\,t$ (Deterministic) & $\Lop_0 + \bar q\,\Lop_1$ & Continuous Diffusion \\
\textbf{Heavy-Tailed Homogenization} & $R_1 \in RV_{-\beta}$, $0 < \beta < 1$ & $S_t^\beta$ (L\'evy Subordinator) & $\Lop_0 - \kappa(-\Lop_1)^\beta$ & Markov process with jumps \\
\bottomrule
\end{tabular}%
}
\caption{Effective operators for deterministic and random operational clocks.}
\label{tab:phase_transition}
\end{table}
\end{remark}

\subsection{Application to stochastic volatility}
\label{subsec:ou_volatility}

To illustrate the homogenization theory in financial mathematics, we examine a multiscale stochastic volatility model driven by the continuous Ornstein--Uhlenbeck environment developed in Section~\ref{sec:verification}.
In multiscale financial modeling (see e.g.~Pavliotis and Stuart~\cite{PavliotisStuart2008}), asset price volatility typically exhibits multiple timescales: a slow baseline scale reflecting macroeconomic conditions, and a fast scale driven by rapid order flow, news arrivals, or trading bursts.
Under this multiscale framework, consider an asset price $\mathsf S_t^\varepsilon$ governed by the Black--Scholes equation
\begin{equation}
 \frac{\diff\mathsf S_t^\varepsilon}{\mathsf S_t^\varepsilon}
 =\mu\,\diff t+\sqrt{v_0}\,\diff W_t^0
                  +\sqrt{v_1r_t^\varepsilon}\,\diff W_t^1,
 \qquad \mathsf S_0^\varepsilon=s_0>0,
 \label{eq:ou_volatility_price}
\end{equation}
where $\mu\in\R$ is the expected rate of return, $v_0\ge0$ represents a baseline variance, $v_1>0$ scales the fast volatility fluctuations, and $W^0,W^1$ are standard Brownian motions.
The instantaneous variance of the asset is $V_t^\varepsilon=v_0+v_1r_t^\varepsilon$, and its integrated variance is $v_0t+v_1A_t^\varepsilon$.

Applying It\^o's formula to the log-price $X_t^\varepsilon=\log\mathsf S_t^\varepsilon$ with initial state $x=\log s_0\in\R$, we obtain the microscopic log-price SDE
\begin{equation}
 \diff X_t^\varepsilon
 =\left(\mu-\frac{v_0}{2}-\frac{v_1}{2}r_t^\varepsilon\right)\diff t
   +\sqrt{v_0}\,\diff W_t^0
   +\sqrt{v_1r_t^\varepsilon}\,\diff W_t^1,
 \qquad X_0^\varepsilon=x.
 \label{eq:ou_volatility_micro}
\end{equation}
In classical multiscale models, the fast volatility factor is typically assumed to satisfy an $L^1$ ergodic theorem, so that $A_t^\varepsilon\to\bar q t$ deterministically as $\varepsilon\to0$, and the price homogenizes to a continuous diffusion with constant averaged volatility. In contrast, empirical financial returns display heavy-tailed volatility clustering and jump discontinuities.
Here, we model the fast rate by $r_\varepsilon$ in \eqref{eq:ou_activity_scaling}, driven by the stationary OU environment of Section~\ref{sec:verification}. As shown in \eqref{eq:ou_clock_limit}, its excursion rewards have heavy tails with index $\beta=1/2$, leading to the stable subordinator clock $A^\varepsilon\xRightarrow{M_1}S$ rather than deterministic averaging.
The main theorem applies to the log-price process \eqref{eq:ou_volatility_micro}, whose four slow coefficients are constant and satisfy Assumption~\ref{ass:slow_coefficients}; the Brownian motions $W^0,W^1$ are mutually independent and independent of the entire OU environment $Y$,  and satisfy Assumption~\ref{ass:decoupling}; the activity variance $v_1$ also satisfies Assumption~\ref{ass:uniform_ellipticity}.

Here the background and activity generators are
\begin{equation}
 \Lop_0=\left(\mu-\frac{v_0}{2}\right)\partial_x
                         +\frac{v_0}{2}\partial_{xx},\qquad
 \Lop_1=-\frac{v_1}{2}\partial_x+\frac{v_1}{2}\partial_{xx}.
 \label{eq:ou_volatility_generators}
\end{equation}
Theorems~\ref{thm:full_homogenization} and~\ref{cor:fractional_homog} give the homogenization limit with effective generator
\begin{equation}
 \overline{\Lop}f=\Lop_0f-\kappa_h(-\Lop_1)^{1/2}f,
 \qquad f\in C_c^\infty(\R).
 \label{eq:ou_volatility_effective}
\end{equation}
Because the two generators have constant coefficients, the limiting log-price has an explicit representation. Let $B^0,B^1,S$ be mutually independent, with $S$ as in \eqref{eq:ou_clock_limit}, and set $L_t^{\mathrm C}:=B_{S_t}^1$. This is a symmetric $\alpha$-stable L\'evy process with $\alpha=2\beta=1$ (a Cauchy process), since
\[
 \E[e^{i\xi L_t^{\mathrm C}}]
 =\E[e^{-\xi^2S_t/2}]
 =\exp\!\left(-t\frac{\kappa_h}{\sqrt2}|\xi|\right),
 \qquad \xi\in\R.
\]
The limiting log-price can therefore be written as
\begin{equation}
 \bar X_t=x+\left(\mu-\frac{v_0}{2}\right)t
       +\sqrt{v_0}\,B_t^0-\frac{v_1}{2}S_t
       +\sqrt{v_1}\,L_t^{\mathrm C},
 \label{eq:ou_volatility_limit}
\end{equation}
with macroscopic price limit $\bar{\mathsf S}_t=e^{\bar X_t}$ in finite-dimensional distributions.
Here $B^0$ is independent of $(L^{\mathrm C},S)$, while $L^{\mathrm C}$ and $S$ are coupled through the common subordinator $S$. Conditioning on $S$ yields the explicit characteristic function
\begin{equation}
 \E[e^{i\xi(\bar X_t-x)}]
 =\exp\left\{t\left[
 i\xi\left(\mu-\frac{v_0}{2}\right)-\frac{v_0\xi^2}{2}
 -\kappa_h\left(\frac{v_1}{2}(\xi^2+i\xi)\right)^{1/2}
 \right]\right\},\qquad \xi\in\R,
 \label{eq:ou_volatility_characteristic}
\end{equation}
where the square root is the principal branch.

This limit reveals two key features: First, although each microscopic path $t\mapsto \mathsf S_t^\varepsilon$ is strictly continuous, the macroscopic limit $\bar{\mathsf S}_t$ exhibits genuine jump discontinuities, showing that price jumps can emerge endogenously from continuous multiscale diffusion via heavy-tailed volatility bursts. Second, the operational drift $-v_1S_t/2$ originates from the microscopic It\^o correction $-\frac12 v_1 r_t^\varepsilon\diff t$; because it shares the heavy-tailed clock with the activity diffusion, it accumulates along operational time rather than physical time, coupling drift and dispersion into $(\xi^2+i\xi)^{1/2}$ and preserving geometric martingale compensation.

A key advantage of our semigroup homogenization framework is that it also applies when the background and activity generators do not commute, a setting where classical Bochner subordination and additive L\'evy decompositions break down.
To illustrate this non-commuting setting, keep the same environment, take $v_0>0$ and $0<\delta<1$, and replace the constant activity coefficients by
\begin{equation}
 \sigma_1(x)=\sqrt{v_1}(1+\delta\tanh x),\qquad
 b_1(x)=-\tfrac12\sigma_1(x)^2.
 \label{eq:ou_local_volatility_coefficients}
\end{equation}
The microscopic log-price now solves
\begin{equation}\label{eq:ou_local_volatility_micro}
 \diff X_t^\varepsilon=
 \left(\mu-\frac{v_0}{2}
       -\frac12r_t^\varepsilon\sigma_1(X_t^\varepsilon)^2\right)\diff t+\sqrt{v_0}\,\diff W_t^0
   +\sqrt{r_t^\varepsilon}\,\sigma_1(X_t^\varepsilon)\,\diff W_t^1.
\end{equation}
These slow coefficients are bounded and globally Lipschitz. Writing $a_1(x)=\sigma_1(x)^2$ gives
\[
 v_1(1-\delta)^2\le a_1(x)\le v_1(1+\delta)^2,\qquad
 \Lop_1=\frac{a_1(x)}2(\partial_{xx}-\partial_x).
\]
The coefficient of $\partial_{xxx}$ in $[\Lop_0,\Lop_1]$ is $v_0a_1'(x)/2$, which is not identically zero. Hence the background and activity generators do not commute. The same environmental verification and main theorems still yield
$\overline{\Lop}f=\Lop_0f-\kappa_h(-\Lop_1)^{1/2}f$ on $C_c^\infty(\R)$, with the generalized $1$-stable-like activity kernel of Theorem~\ref{cor:fractional_homog}. The limit is constructed by the extended-time evolution and semigroup composition in time order; the additive L\'evy representation \eqref{eq:ou_volatility_limit} applies only to the constant-coefficient case.

\begin{remark}[Noise decoupling and the leverage effect]
\label{rem:volatility_discussion}
Assumption~\ref{ass:decoupling} enforces independence between the diffusion noises $(W^0,W^1)$ and the environment $Y$, corresponding to a baseline market model without leverage effects. This decoupling isolates the purely temporal mechanism by which heavy-tailed intermittency generates price jumps. Incorporating correlation between return shocks and volatility innovations would require joint limit theorems for the clock and the driving stochastic integrals, representing an interesting direction for future research.
\end{remark}

\section{Proof of the homogenization theorem}
\label{sec:proofs}

Starting from the environmental clock limit in Theorem~\ref{thm:clock_limit}, we first construct the evolution in time order for finitely many bursts (Section~\ref{subsec:finite_bursts}). Extended-time representation and stability then provide the clock-to-state transfer (Section~\ref{subsec:extended_stability}). Section~\ref{subsec:main_proof} applies this transfer and restores the small bursts to identify the limiting semigroup and generator. Section~\ref{subsec:fractional_topology} proves the stable-like kernel estimates and checks the Brownian case; Throughout the proof, write $S=S^\beta$, $\nu=\nu_\beta$, and $\Phi(\lambda)=\kappa\lambda^\beta$.

\subsection{Finite-burst dynamics and ordered semigroup evolutions}\label{subsec:finite_bursts}

We first fix $\eta>0$, and retain the activity bursts whose sizes exceed
$\eta$. Since $\Lambda_\eta:=\nu((\eta,\infty))<\infty$,
these retained bursts form a compound-Poisson mechanism on every
finite physical-time interval. Between successive burst times, the
state evolves according to the background diffusion with semigroup
$P^{(0)}$. At a burst time $T_j$, physical time does not advance,
whereas the activity operational clock makes an instantaneous jump of
size $J_j>\eta$. During this operational-time increment, the state is
transported by the activity diffusion for time $J_j$, and hence the
burst acts on the state through the transition kernel
$P_{J_j}^{(1)}$. After the burst, the background evolution resumes
until the next burst.

It is useful to average over the burst size first. Define the Markov
operator
\begin{equation}
    K_\eta f
    :=
    \frac{1}{\Lambda_\eta}
    \int_{(\eta,\infty)} P_r^{(1)}f\,\nu(\diff r).
    \label{eq:K_eta}
\end{equation}
Thus, at rate $\Lambda_\eta$, one samples a burst size $J$ from the
probability $\Lambda_\eta^{-1}\nu(\diff r)\mathbf 1_{\{r>\eta\}}$ and then applies the activity transition $P_J^{(1)}$. In particular, the corresponding
jump part of the generator will be
\[
    \Lambda_\eta(K_\eta-I) = \int_{(\eta,\infty)}
        (P_r^{(1)}-I)\,\nu(\diff r).
\]
Let $(T_j,J_j)_{j=1}^m$ be the Poisson burst times and sizes in $[0,t]$ with $J_j>\eta$ and intensity $\diff s\,\nu(\diff r)$.
Conditional on the marked Poisson atoms, the evolution is obtained by an ordered composition of the background and activity semigroups:
\begin{equation}\label{eq:chronological_product}
    \cP_{\mathbf T,\mathbf J}(t)
    := P_{T_1}^{(0)}P_{J_1}^{(1)}P_{T_2-T_1}^{(0)}P_{J_2}^{(1)}\cdots
    P_{T_m-T_{m-1}}^{(0)}P_{J_m}^{(1)}P_{t-T_m}^{(0)},
\end{equation}
with $\cP_{\mathbf T,\mathbf J}(t)=P_t^{(0)}$ when $m=0$.
This representation is important because no commutation between semigroups $P^{(0)}$ and $P^{(1)}$ is assumed. Averaging $\cP_{\mathbf T,\mathbf J}(t)$ over the marked Poisson configuration produces the finite-burst semigroup studied below Proposition. The entire construction preserves the actual ordering of the
background and burst evolutions; at no point is commutativity of $P^{(0)}$ and $P^{(1)}$ used.

\begin{proposition}[Finite-burst semigroup and generator]
\label{thm:truncated_evolution}
Under Assumption~\ref{ass:slow_coefficients}, fix $\eta>0$, and let
$\Lambda_\eta$, $K_\eta$, and
$\cP_{\mathbf T,\mathbf J}(t)$ be defined as above. Then the averaging 
\begin{equation}\label{eq:finite_burst_semigroup}
    \bar P_t^\eta f := \E\!\left[\cP_{\mathbf T,\mathbf J}(t)f\right],
    \qquad f\in C_0(\R^d),
\end{equation}
defines a Feller semigroup on $C_0(\R^d)$ with conservative transition kernels. Its generator has domain $D(\Lop_\eta)=D(\Lop_0)$. Moreover, for $f\in D(\Lop_\eta)$, $\Lop_\eta$ is given by
\begin{equation}\label{eq:gen_eta}
    \Lop_\eta f = \Lop_0 f+\Lambda_\eta(K_\eta-I)f =\Lop_0 f
      +\int_{(\eta,\infty)}
        \bigl(P_r^{(1)}f-f\bigr)\,\nu(\diff r).
\end{equation}
\end{proposition}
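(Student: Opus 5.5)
The plan is to recognise $\bar P^\eta$ as the semigroup of a bounded perturbation of $\Lop_0$, and then identify it with the Poisson average \eqref{eq:finite_burst_semigroup} through a Dyson--Phillips expansion. The jump part of \eqref{eq:gen_eta} is a bounded operator. Indeed, $K_\eta$ is a positive contraction on $C_0(\R^d)$, being a probability mixture of the Feller operators $P^{(1)}_r$. The Bochner integral in \eqref{eq:K_eta} converges in $\|\cdot\|_\infty$ because $r\mapsto P^{(1)}_rf$ is norm continuous and bounded by $\|f\|_\infty$, while $\Lambda_\eta^{-1}\nu|_{(\eta,\infty)}$ is a probability measure. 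Hence $B_\eta:=\Lambda_\eta(K_\eta-I)$ is bounded with $\|B_\eta\|_{\mathrm{op}}\le 2\Lambda_\eta$. By the bounded perturbation theorem, $\Lop_0+B_\eta$ with domain $D(\Lop_0)$ generates a strongly continuous semigroup $(U_t)$ on $C_0(\R^d)$.

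Next I identify $U_t$ with $\bar P_t^\eta$. I write $\Lop_0+B_\eta=(\Lop_0-\Lambda_\eta)+\Lambda_\eta K_\eta$, so that $U_t$ is given by the norm-convergent Dyson--Phillips series
\[
U_t=\sum_{m\ge0}\Lambda_\eta^m e^{-\Lambda_\eta t}\int_{0<s_1<\cdots<s_m<t}P^{(0)}_{s_1}K_\eta P^{(0)}_{s_2-s_1}K_\eta\cdots K_\eta P^{(0)}_{t-s_m}\,\diff s_1\cdots\diff s_m .
\]
On the other side, I condition on the number $m$ of marked atoms in $[0,t]$, which is Poisson$(\Lambda_\eta t)$. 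Given $m$, the times are uniform order statistics with density $m!/t^m$ on the simplex, and the marks are i.i.d.\ with law $\Lambda_\eta^{-1}\nu|_{(\eta,\infty)}$, independent of the times. Integrating out each mark $J_j$ in \eqref{eq:chronological_product} replaces $P^{(1)}_{J_j}$ by $K_\eta$; this is legitimate because the composition is multilinear and every factor is a contraction, so Fubini applies to the Bochner integrals. The result reproduces the series term by term. Hence $\bar P^\eta_t=U_t$, and the semigroup property, strong continuity, generator formula and domain $D(\Lop_\eta)=D(\Lop_0)$ all follow at once. I can also see the semigroup property directly from the independent increments of the Poisson measure, since the product \eqref{eq:chronological_product} factorises at time $s$; this gives a useful consistency check.

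It remains to prove the Feller and conservativity properties. Positivity and contractivity on $C_0(\R^d)$ hold because each $\cP_{\mathbf T,\mathbf J}(t)$ is a composition of positive contractions mapping $C_0$ into $C_0$, and averaging preserves this. Conservativity follows by extending everything to bounded measurable functions: $P^{(0)}1=P^{(1)}_r1=1$ gives $\cP_{\mathbf T,\mathbf J}(t)1=1$, and therefore $\bar P^\eta_t1=1$. Here I must check that the kernel extension is consistent with the $C_0$ semigroup, and I will do this by monotone approximation of $1$ by functions in $C_c$.

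The main obstacle I expect is the measurability and Bochner-integrability needed to interchange the expectation with the ordered product. The map $(\mathbf T,\mathbf J)\mapsto\cP_{\mathbf T,\mathbf J}(t)f$ must be strongly measurable, and I will obtain this from the joint strong continuity of $(s,r)\mapsto P^{(0)}_sP^{(1)}_r g$. This joint continuity follows from strong continuity together with the uniform contraction bounds.
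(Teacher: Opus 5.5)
Your proof is correct. It shares the paper's first step: conditioning on the number, times and marks of the retained bursts to write $\bar P_t^\eta f$ as a Poisson-weighted series of ordered compositions. The two arguments then diverge.

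You integrate out the marks, turning each $P^{(1)}_{J_j}$ into $K_\eta$. You then recognise the resulting series as the Dyson--Phillips expansion of the semigroup generated by $(\Lop_0-\Lambda_\eta I)+\Lambda_\eta K_\eta$ on $D(\Lop_0)$. The bounded perturbation theorem then gives the semigroup law, strong continuity, the generator formula and $D(\Lop_\eta)=D(\Lop_0)$ all at once.

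The paper instead works directly with the probabilistic object:
\begin{itemize}
\item It proves the semigroup law from independent Poisson increments, and strong continuity from an explicit estimate.
\item It obtains the generator action on $D(\Lop_0)$ by splitting $[0,h]$ into zero, one, or at least two bursts. This shows only that $\Lop_0+B_\eta$ is contained in the generator $G_\eta$ of $\bar P^\eta$.
\item It closes the domain gap with a resolvent argument: $\lambda I-\Lop_0-B_\eta$ maps $D(\Lop_0)$ onto $C_0(\R^d)$ by a Neumann series for $\lambda>2\Lambda_\eta$, and $\lambda I-G_\eta$ is injective. This is in effect a hand-made proof of the domain part of the perturbation theorem you cite.
\end{itemize}

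Your route is shorter and identifies the two objects exactly at the level of semigroups rather than generators. Your direct check of the semigroup law via Poisson increments is therefore redundant. The paper's route is more self-contained, avoiding the Dyson--Phillips machinery. It also makes the ``at most one burst per short interval'' mechanism behind $\Lop_0+\Lambda_\eta(K_\eta-I)$ explicit, at the cost of verifying the semigroup law and strong continuity separately.
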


\begin{proof}
Conditioning on the number, times, and marks of the retained Poisson
bursts gives
\begin{align*}
    \bar P_t^\eta f
    =
    e^{-\Lambda_\eta t}\sum_{m=0}^{\infty}&\int_{0<s_1<\cdots<s_m<t}\int_{(\eta,\infty)^m}
    P_{s_1}^{(0)}P_{r_1}^{(1)}
    P_{s_2-s_1}^{(0)}P_{r_2}^{(1)}\cdots P_{r_m}^{(1)} P_{t-s_m}^{(0)}f \prod_{j=1}^m \nu(\diff r_j)\,\diff s_1\cdots\diff s_m.
\end{align*}
Since both $P^{(0)}$ and $P^{(1)}$ are contractions, the
$m$th term is bounded by $e^{-\Lambda_\eta t}(\Lambda_\eta t)^m\|f\|_{\infty}/m!$.
Hence the series converges in $C_0(\R^d)$ and gives a positive
contraction.
Independent Poisson increments and concatenation of ordered semigroup evolutions give $\bar P_{t+s}^\eta=\bar P_t^\eta\bar P_s^\eta$. Moreover,
\[
 \|\bar P_h^\eta f-f\|_{\infty}
 \le\|P_h^{(0)}f-f\|_{\infty}+2\|f\|_{\infty}(1-e^{-\Lambda_\eta h})\longrightarrow0  \qquad \text{as } h \to 0.
\]
Thus this construction gives a Feller semigroup.
Moreover, each factor $P_s^{(0)}$ and $P_r^{(1)}$ has conservative transition kernels. Hence every ordered composition satisfies $\cP_{\mathbf T,\mathbf J}(t)(x,\R^d)=1$.
Averaging over the marked Poisson configuration preserves this
identity, so the transition kernel $\bar p_t^\eta$ of $\bar P_t^\eta$
satisfies $ \bar p_t^\eta(x,\R^d)=1$, which implies the conservative property of $\bar p_t^\eta$.

Let $G_\eta$ denote its generator and take
$f\in D(\Lop_0)$.  To compute $G_\eta f$, we decompose the short-time
evolution according to whether there are zero, one, or at least two
bursts in $[0,h]$. 
For zero bursts, we have
\[
     \frac{e^{-\Lambda_\eta h}P_h^{(0)}f-f}{h} = e^{-\Lambda_\eta h}
    \frac{P_h^{(0)}f-f}{h} + \frac{e^{-\Lambda_\eta h}-1}{h}\,f \longrightarrow
    \Lop_0 f-\Lambda_\eta f  \qquad \text{as } h \to 0.
\]
Similarly, the one-burst contribution in $[0,h]$ is, after setting the burst time equal to $hu$,
\[
    R_h^{(1)}f:=h\,e^{-\Lambda_\eta h}\int_0^1\int_{(\eta,\infty)}P_{hu}^{(0)}P_r^{(1)}P_{h(1-u)}^{(0)}f\, \nu(\diff r)\,\diff u .
\]
The contractivity of $P_t^{(0)}$ and $P_t^{(1)}$ gives the uniform
bound $\bigl\|P_{hu}^{(0)}P_r^{(1)}P_{h(1-u)}^{(0)}f\bigr\|_\infty \le \|f\|_\infty $.
Since $\nu((\eta,\infty))=\Lambda_\eta<\infty$, strong continuity of the semigroups and dominated convergence
therefore yields 
\begin{align*}
    \frac{R_h^{(1)}f}{h} \to
    \int_{(\eta,\infty)}P_r^{(1)}f\,\nu(\diff r)
    = \Lambda_\eta K_\eta f, \quad \text{as } h \to 0.
\end{align*}
Finally, let $N_h^\eta$ denote the number of retained bursts in $[0,h]$. Since the retained marked Poisson
random measure has intensity $\diff s\,\nu(\diff r)$ on $ [0,\infty)\times(\eta,\infty)$,
the number of its atoms in $[0,h]\times(\eta,\infty)$ is Poisson distributed with parameter $ \Lambda_\eta h$.
Hence
\begin{align*}
    \Pp(N_h^\eta\ge2)
    =
    1-\Pp(N_h^\eta=0)-\Pp(N_h^\eta=1) =
    1-e^{-\Lambda_\eta h}
      -\Lambda_\eta h\,e^{-\Lambda_\eta h}
    =
    O(h^2)
    \qquad\text{as }h\downarrow0.
\end{align*}
Since every ordered semigroup composition is contractive on
$C_0(\R^d)$, the contribution of trajectories containing at least two
bursts satisfies
\[
    \|R_h^{(\ge2)}f\|_\infty
    \le
    \|f\|_\infty\,\Pp(N_h^\eta\ge2)
    =
    O(h^2).
\]
Therefore, after division by $h$, the contribution of two or more bursts vanishes as $h\downarrow0$.
Consequently, we obtain $D(\Lop_0)\subset D(G_\eta)$ and the generator action in \eqref{eq:gen_eta} for $f\in D(\Lop_0)$. 

It remains only to identify the generator domain. Since $K_\eta$ is
a Markov operator, its contraction property implies $\|K_\eta\|_{\mathrm{op}}\le1$. The bounded perturbation appearing in the generator is
\[
 B_\eta:=\Lambda_\eta(K_\eta-I),\qquad
 \|B_\eta\|_{\mathrm{op}}\le2\Lambda_\eta.
\]
The preceding calculation therefore shows that $\Lop_0+B_\eta\subset G_\eta$, with $\Lop_0+B_\eta$ defined on $D(\Lop_0)$.
For $\lambda>2\Lambda_\eta$ and
$R_\lambda^{(0)}=(\lambda I-\Lop_0)^{-1}$,
contractivity of $P^{(0)}$ gives $\|R_\lambda^{(0)}\|_{\mathrm{op}}\le\lambda^{-1}$, and hence
\[
 \|B_\eta R_\lambda^{(0)}\|_{\mathrm{op}}
 \le\frac{2\Lambda_\eta}{\lambda}<1.
\]
The factorization
\[
 \lambda I-\Lop_0-B_\eta
 =(I-B_\eta R_\lambda^{(0)})(\lambda I-\Lop_0)
 \quad\hbox{on }D(\Lop_0)
\]
shows that $\lambda I-\Lop_0-B_\eta$ maps $D(\Lop_0)$ onto $C_0(\R^d)$: the first factor is invertible by the Neumann series, and the second has inverse $R_\lambda^{(0)}$. Explicitly, for every $g\in C_0(\R^d)$,
\[
 h=R_\lambda^{(0)}(I-B_\eta R_\lambda^{(0)})^{-1}g\in D(\Lop_0),
 \qquad (\lambda I-\Lop_0-B_\eta)h=g.
\]
To prove the reverse domain inclusion, take $f\in D(G_\eta)$ and put $g=(\lambda I-G_\eta)f$. Choose $h$ as above. Since $G_\eta$ extends $\Lop_0+B_\eta$,
\[
 (\lambda I-G_\eta)h
 =(\lambda I-\Lop_0-B_\eta)h
 =g=(\lambda I-G_\eta)f.
\]
As the generator of a contraction semigroup, $G_\eta$ has $(0,\infty)$ in its resolvent set. Thus $\lambda I-G_\eta$ is injective, which gives $f=h\in D(\Lop_0)$. Together with the first inclusion, this proves $D(G_\eta)=D(\Lop_0)$. Identifying $\Lop_\eta=G_\eta$ completes \eqref{eq:gen_eta}.
\end{proof}

\subsection{Extended-time representation and stability}\label{subsec:extended_stability}

Note that the microscopic equation is driven by two different notions of time: physical time $t$, which governs the background diffusion, and the accumulated activity clock $A_t^\varepsilon$, which governs the activity diffusion.
The extended-time construction places the background and activity-driven parts of the dynamics on a common continuous time scale. In the microscopic system, $A^\varepsilon$ is continuous and absolutely continuous, whereas its limiting counterpart is the c\`adl\`ag subordinator $S$ and may therefore have jumps. A direct stability argument for the original SDE is inconvenient, since the corresponding activity rates may become arbitrarily large as activity concentrates into bursts. We instead absorb these rates into a reparametrization of time. We first show that convergence of $A^\varepsilon$ at continuity points of $S$ yields locally uniform convergence of the extended clocks, and then prove stability of the extended-time equation.

Fix paths of $A^\varepsilon$ and $S$. Both extended clocks are nondecreasing, c\`adl\`ag, start at zero, and are finite on compact intervals. Define the total evolution times
\[
 F^\varepsilon(t):=t+A_t^\varepsilon,\qquad F(t):=t+S_t.
\]
The two components of this total time are recovered through
\begin{equation}
\begin{aligned}
    \tau^\varepsilon(u)
    &:=\inf\{t\ge0:F^\varepsilon(t)>u\},
    &c^\varepsilon(u)&:=u-\tau^\varepsilon(u),\\
    \tau(u)
    &:=
    \inf\{t\ge0:F(t)>u\},
    &c(u)&:=u-\tau(u).
\end{aligned}
    \label{eq:extended_clocks}
\end{equation}
Here $\tau(u)$ records the amount of physical time elapsed by
extended time $u$, and $c(u)$ records the amount of activity
operational time already consumed. The same interpretation applies to $(\tau^\varepsilon,c^\varepsilon)$.

\begin{lemma}\label{prop:extended_clocks}
The amount of physical time $\tau$, and the activity operational time $c$ are continuous, nondecreasing, and $1$-Lipschitz, with $\tau(u)+c(u)=u$. On $[t+S_{t-},t+S_t]$, physical time is frozen at $\tau(u)=t$ and activity increases by $\Delta S_t$. At the right endpoint, $\tau(F(t))=t$ and $c(F(t))=S_t$. The same statements hold for $(F^\varepsilon,\tau^\varepsilon,c^\varepsilon)$ with $A^\varepsilon$ in place of $S$.
\end{lemma}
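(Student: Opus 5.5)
The plan is to treat $F$ as a strictly increasing càdlàg function and $\tau$ as its generalized right-continuous inverse, and then derive all the listed properties from elementary inequalities.

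\emph{Monotonicity and finiteness.} First, $F(t)=t+S_t$ is strictly increasing, because $t\mapsto t$ is strictly increasing and $S$ is nondecreasing. It is càdlàg, $F(0)=S_0=0$, and $F(t)\ge t\to\infty$. Hence the set $\{t\ge 0:F(t)>u\}$ is nonempty for every $u\ge0$. So $\tau(u)$ is finite, nondecreasing in $u$, and satisfies $\tau(u)\le u$, since $F(t)\ge t>u$ for every $t>u$.

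\emph{The $1$-Lipschitz bounds.} This is the key step: for $u<u'$ we want $0\le \tau(u')-\tau(u)\le u'-u$. The lower bound is monotonicity. For the upper bound, put $t=\tau(u)$. Right-continuity of $F$ gives $F(t)\ge u$, because $F(s)>u$ for every $s>t$. Then for any $h>u'-u$,
\[
F(t+h)=t+h+S_{t+h}\ge t+h+S_t= F(t)+h>u',
\]
so $\tau(u')\le t+h$. Letting $h\downarrow u'-u$ yields $\tau(u')\le\tau(u)+u'-u$. This is exactly the statement that $c(u)=u-\tau(u)$ is nondecreasing. Together with the bound on $\tau$, it shows that both $\tau$ and $c$ are nondecreasing and $1$-Lipschitz, hence continuous. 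The identity $\tau+c=u$ holds by definition.

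\emph{Behaviour across a jump.} Fix $t$ and let $u\in[F(t-),F(t)]=[t+S_{t-},\,t+S_t]$. For $s<t$ we have $F(s)\le F(t-)\le u$, because $F$ is increasing. This is where the left limit enters: any $s<t$ lies strictly below $t$, so $F(s)\le F(t-)$. Hence $\tau(u)\ge t$. For $s>t$ we have $F(s)>F(t)\ge u$ by strict monotonicity, so $\tau(u)\le t$. Therefore $\tau\equiv t$ on this interval, and consequently $c(u)=u-t$ increases from $S_{t-}$ to $S_t$, that is, by $\Delta S_t$. At the right endpoint $u=F(t)$ this gives $\tau(F(t))=t$ and $c(F(t))=F(t)-t=S_t$.

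\emph{The microscopic clocks.} The argument used only three facts: $A$ is nondecreasing, càdlàg, and starts at $0$. The same reasoning therefore applies verbatim to $F^\varepsilon=t+A_t^\varepsilon$. There $A^\varepsilon$ is even continuous by Assumption~\ref{ass:local_activity}, so the frozen intervals degenerate to single points.

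The only delicate point I expect is the endpoint case, namely justifying $F(\tau(u))\ge u$ from right-continuity and $F(s)\le F(t-)\le u$ for $s<t$. Both follow from the infimum definition together with the càdlàg property.
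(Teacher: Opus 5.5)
Your proof is correct and follows the paper's route. The Lipschitz bound comes from $F(t+h)\ge F(t)+h$ (the paper's inequality $F(t)-F(s)\ge t-s$) together with $F(\tau(u))\ge u$, and the frozen-interval and endpoint identities come from the infimum definition of the generalized inverse; you just write out the details the paper leaves implicit.
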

\begin{proof}
Strict increase of $F$ gives a nondecreasing inverse. The inequality $F(t)-F(s)\ge t-s$ implies $0\le\tau(v)-\tau(u)\le v-u$. Hence $c(u)=u-\tau(u)$ is also nondecreasing and $1$-Lipschitz. The description of jump intervals and the endpoint identities follow directly from the definition of the generalized inverse.
\end{proof}

Thus the extended time $u$ is continuously allocated between physical evolution and activity evolution.
This parametrization also resolves the jumps of the limiting activity clock. If $S$ has a jump of size $r=\Delta S_t$ at physical time $t$, then $F$ jumps across the interval $[\,t+S_{t-},\,t+S_t\,]$.
On this interval, $\tau$ remains constant and equal to $t$, while $c$ increases by exactly $r$.  Hence physical time $t$ is frozen while the activity dynamics evolves for operational duration $r$.  In this way, an activity burst of vanishing physical duration is expanded into an ordinary interval of extended time.  This is the mechanism by which a limiting clock jump of size $r$ produces the activity transition $P_r^{(1)}$.

The next lemma supplies the clock convergence needed for SDE stability. Its proof is deterministic: convergence at continuity points must first be transferred to the inverse clocks, since $\tau(u)$ may itself be a jump time of $S$.

\begin{lemma}[Convergence of inverse clocks]
\label{lem:inverse_clock_convergence}
Let $\varepsilon_n\downarrow0$ and fix clock paths as above. If $A_t^{\varepsilon_n}\to S_t$ at every continuity point of $S$, then the corresponding inverse clocks satisfy $\tau^{\varepsilon_n}\to\tau$ and $c^{\varepsilon_n} \to c$ locally uniformly. This implication remains valid for any nondecreasing c\`adl\`ag approximating and limiting clocks that start at zero and are finite on compact intervals.
\end{lemma}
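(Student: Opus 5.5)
The plan is to work entirely with the total-time maps $F_n:=F^{\varepsilon_n}$ and $F$ and their generalized inverses. This is a deterministic statement about monotone functions, so no probability enters. I will prove pointwise convergence $\tau^{\varepsilon_n}(u)\to\tau(u)$ for every $u\ge0$ and then upgrade it to local uniformity by equicontinuity. The upgrade is cheap: by Lemma~\ref{prop:extended_clocks}, which holds verbatim for any nondecreasing c\`adl\`ag clock starting at zero, every $\tau^{\varepsilon_n}$ is nondecreasing and $1$-Lipschitz. A family of uniformly $1$-Lipschitz functions converging pointwise to a continuous limit converges uniformly on compacts (Arzel\`a--Ascoli, or a direct $\delta/3$ argument on a finite grid). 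Since $c^{\varepsilon_n}(u)=u-\tau^{\varepsilon_n}(u)$ and $c(u)=u-\tau(u)$, convergence of $c^{\varepsilon_n}$ to $c$ is immediate once $\tau^{\varepsilon_n}\to\tau$.

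For the pointwise step, I fix $u\ge0$, put $t_0:=\tau(u)$, and take $\delta>0$. The set of discontinuity points of $S$ is countable, so I can choose continuity points $s_-\in(t_0-\delta,t_0)$ (only when $t_0>0$) and $s_+\in(t_0,t_0+\delta)$. The hypothesis gives $F_n(s_\pm)\to F(s_\pm)$.

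\textbf{Upper bound.} Since $s_+>t_0=\inf\{t:F(t)>u\}$ and $F$ is nondecreasing, some $t<s_+$ has $F(t)>u$. Because $F$ is strictly increasing, $F(s_+)>u$. Therefore $F_n(s_+)>u$ for large $n$, which forces $\tau^{\varepsilon_n}(u)\le s_+<t_0+\delta$.

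\textbf{Lower bound.} This is the only delicate point, because $t_0$ may itself be a jump time of $S$. Since $s_-<t_0$, the definition of the infimum gives $F(s_-)\le u$. I need a strict inequality to transfer it to $F_n$. Strict increase supplies it: $F(s_-)<F(s_-')\le u$ for any continuity point $s_-'\in(s_-,t_0)$, so $F(s_-)<u$. Hence $F_n(s_-)<u$ for large $n$. Since $F_n$ is nondecreasing, $F_n(t)<u$ for all $t\le s_-$, so $\tau^{\varepsilon_n}(u)\ge s_->t_0-\delta$. When $t_0=0$ the lower bound is trivial.

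Combining the two bounds, $|\tau^{\varepsilon_n}(u)-\tau(u)|<\delta$ eventually. The proof used only monotonicity, the c\`adl\`ag property, the value zero at the origin, local finiteness, and the strict increase coming from the added identity term $t$. This justifies the final sentence of the lemma about general nondecreasing c\`adl\`ag clocks. The main obstacle I expect is the lower bound at jump times, and it is resolved by the strict-inequality argument using the $+t$ component together with the density of continuity points.
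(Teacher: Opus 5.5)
Your proposal is correct and follows the paper's proof essentially step for step. You take continuity points $s_-<\tau(u)<s_+$ of $S$, use strict increase of $F$ to get $F(s_-)<u<F(s_+)$, and transfer these inequalities to $F^{\varepsilon_n}$ to trap $\tau^{\varepsilon_n}(u)$; you then upgrade to local uniform convergence via the common $1$-Lipschitz bound and recover $c^{\varepsilon_n}=u-\tau^{\varepsilon_n}$. Your explicit argument that $F(s_-)\le u$ becomes strict even when $\tau(u)$ is a jump time is a slightly more careful version of the paper's one-line appeal to strict increase.
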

\begin{proof}
For fixed $u>0$ with $\tau(u)>0$, choose continuity points $t_-<\tau(u)<t_+$ of $S$ arbitrarily close to $\tau(u)$. Strict increase of $F$ gives $F(t_-)<u<F(t_+)$. Convergence at these two continuity points yields
\[
 F^{\varepsilon_n}(t_-)<u<F^{\varepsilon_n}(t_+)
 \quad\text{for all sufficiently large }n,
\]
so eventually $\tau^{\varepsilon_n}(u)\in[t_-,t_+]$. When $\tau(u)=0$, the upper bound and nonnegativity suffice; at $u=0$ all inverses vanish. The common $1$-Lipschitz bound upgrades pointwise convergence to uniform convergence on compact intervals by a finite grid argument. Indeed, for a grid $0=u_0<\cdots<u_m=U$ of mesh at most $\delta$,
\[
 \sup_{0\le u\le U}|\tau^{\varepsilon_n}(u)-\tau(u)|
 \le 2\delta+\max_{0\le k\le m}|\tau^{\varepsilon_n}(u_k)-\tau(u_k)|.
\]
First let $n\to\infty$ and then $\delta\downarrow0$. Finally, $c^{\varepsilon_n}(u)=u-\tau^{\varepsilon_n}(u)$. The proof uses only the stated path properties of the clocks.
\end{proof}

\paragraph{Extended-time equation and uniform estimates.}
Given independent Brownian motions $B^0$ and $B^1$, independent of the activity clocks, we therefore
associate with the clock $S$ the extended-time process
$Z^{x}=(Z_u^{x})_{u\ge0}$ defined by \eqref{eq:extended_sde}.
For $A^\varepsilon$, denote the solution by $Z^{\varepsilon,x}$ and replace $(\tau,c)$ by $(\tau^\varepsilon,c^\varepsilon)$ in \eqref{eq:extended_sde}. The following estimates apply to both equations and to the same equation driven by any extended clocks with the properties of Lemma~\ref{prop:extended_clocks}.
The background coefficients are driven by the physical clock
$\tau$, whereas the activity coefficients are driven by the
operational clock $c$.  Since $\diff\tau+\diff c=\diff u$ as measures on the extended-time axis, both clocks are dominated by ordinary time.  Consequently, the possibly large
microscopic activity rate no longer appears explicitly in the
coefficients of \eqref{eq:extended_sde}; it is encoded instead in how the unit amount of extended time is split between $\tau$ and $c$.
This yields moment and stability estimates whose constants are uniform over the admissible clocks.

\begin{lemma}[Uniform estimates for the extended-time equation]
\label{lem:extended_estimates}
Under Assumption~\ref{ass:slow_coefficients}, equation~\eqref{eq:extended_sde} has a unique nonexplosive strong solution. For $0\le u\le v\le U$, it satisfies
\begin{equation}
 \sup_x\E\sup_{u\le U}|Z_u^x-x|^2\le C(U+U^2),
 \qquad
 \sup_x\E|Z_v^x-Z_u^x|^4\le C_U|v-u|^2.
 \label{eq:extended_moments}
\end{equation}
Moreover, for solutions with the same clock and Brownian motions,
\[
 \sup_x\E|Z_v^x-Z_u^x|^2\le C_U|v-u|,
 \qquad
 \E\sup_{u\le U}|Z_u^x-Z_u^y|^2\le C_U|x-y|^2.
\]
The constants depend on the coefficient bounds, Lipschitz constants, and $U$, not on the clock. The same estimates hold conditionally on the clock and after averaging over random clocks independent of the Brownian motions.
\end{lemma}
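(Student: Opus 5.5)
The plan is to treat \eqref{eq:extended_sde} as an ordinary SDE driven by continuous semimartingales: the two Lebesgue--Stieltjes integrators $\tau,c$ and the two time-changed Brownian motions $M^0_u:=B^0_{\tau(u)}$ and $M^1_u:=B^1_{c(u)}$. I will argue conditionally on a fixed clock path and then average.

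\textbf{Step 1 (martingale structure).} Fix the clock pair $(\tau,c)$; it is deterministic, continuous, nondecreasing and $1$-Lipschitz by Lemma~\ref{prop:extended_clocks}. Let $\mathcal G_u:=\sigma(B^0_s,s\le\tau(u))\vee\sigma(B^1_s,s\le c(u))$. Because $\tau(u)$ and $c(u)$ are deterministic stopping times, optional sampling shows that $M^0,M^1$ are continuous $\mathcal G_u$-martingales. Their brackets are $\langle M^0\rangle_u=\tau(u)$ and $\langle M^1\rangle_u=c(u)$. Independence of $B^0$ and $B^1$ gives $\langle M^0,M^1\rangle=0$, and each component is independent of the other given the clock. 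Consequently every stochastic integral $\int H\,\diff M^i$ satisfies It\^o's isometry and BDG with $\diff\langle M^i\rangle\le\diff u$. Similarly, $|\int_u^v H\,\diff\tau|\le\|H\|_\infty(v-u)$, and the same bound holds with $c$ in place of $\tau$. For random clocks independent of $(B^0,B^1)$, I condition on the clock and integrate the resulting estimates, since their constants will not depend on it.

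\textbf{Step 2 (well-posedness).} Existence and uniqueness follow from the standard Picard iteration for SDEs driven by continuous semimartingales with globally Lipschitz coefficients, e.g.\ Protter's theorem for Lipschitz functional SDEs. Here the control measure of all four drivers is dominated by $\diff u$. Concretely, set $\Delta_u=\E\sup_{s\le u}|Z^{(n+1)}_s-Z^{(n)}_s|^2$. Cauchy--Schwarz on the $\diff\tau,\diff c$ integrals (lengths $\le U$) and Doob/BDG on the martingale parts give $\Delta_u\le C_U\int_0^u\Delta_s^{(n-1)}\,\diff s$. Iterating this yields geometric convergence, and Gronwall gives uniqueness. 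Nonexplosion is automatic because the coefficients are bounded.

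\textbf{Step 3 (estimates).} Boundedness of $b_i$ gives a drift contribution $|\int_u^v b_0\,\diff\tau+\int_u^v b_1\,\diff c|\le \|b\|_\infty(v-u)$, using $\diff\tau+\diff c=\diff u$. BDG with bounded $\sigma_i$ controls the martingale contribution by $\E\sup|\cdot|^p\le C_p\|\sigma\|_\infty^p(v-u)^{p/2}$. Taking $p=2$ over $[0,U]$ yields $C(U^2+U)$. Taking $p=2$ and $p=4$ on $[u,v]$ yields $C(v-u)^2+C(v-u)\le C_U(v-u)$ and $C(v-u)^4+C(v-u)^2\le C_U(v-u)^2$, with $C_U$ absorbing the powers of $U$. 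For the initial-condition estimate, I take differences of the two solutions and use the Lipschitz bounds. Cauchy--Schwarz on the Stieltjes parts and Doob on the martingale parts then give $\E\sup_{s\le u}|Z^x_s-Z^y_s|^2\le 3|x-y|^2+C_U\int_0^u\E\sup_{r\le s}|Z^x_r-Z^y_r|^2\,\diff s$. Here I use $\diff\tau,\diff c\le\diff u$ to convert the integrals against $\diff\tau$ and $\diff c$ into $\diff s$-integrals. Gronwall then finishes the bound.

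\textbf{Main obstacle.} The only delicate point is Step 1: checking that the time-changed Brownian motions are martingales in a common filtration that makes the equation adapted, and that the clock enters only through $\diff\langle M^i\rangle\le\diff u$. The claimed uniformity in the clock hinges on exactly this. It needs $\tau,c$ to be independent of $B$, so that I can condition on them, and it needs their $1$-Lipschitz property. Every constant above depends only on the bounds $\|b_i\|_\infty$, $\|\sigma_i\|_\infty$, the Lipschitz constants, and $U$.
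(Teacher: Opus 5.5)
Your proposal is correct and follows essentially the same route as the paper: conditionally on the clock, $B^0\circ\tau$ and $B^1\circ c$ are orthogonal continuous martingales with brackets $\tau(u)$ and $c(u)$, so Picard iteration and Gronwall run with the control $\diff\tau+\diff c=\diff u$. The moment bounds then follow from bounded coefficients together with Doob/BDG. The only differences are cosmetic: the paper puts the clock $\sigma$-field $\mathcal C$ into $\mathcal G_0$ so that random clocks are handled directly on the product space, which is what your ``condition and integrate'' step amounts to, and your Picard bound is in fact factorial rather than merely geometric.
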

\begin{proof}
Consider the usual augmentation of the filtration $\mathcal G_u =\mathcal C\vee\sigma\bigl(B^0_{\tau(v)},B^1_{c(v)}:0\le v\le u\bigr)$, where $\mathcal C:=\sigma\{\tau(v),c(v):v\ge0\}$ is the activity-clock $\sigma$-field.
Conditional on the clock, the two time-changed Brownian motions are $\mathcal G_u$ continuous independent martingales with brackets
\[
 \langle B^0\circ\tau\rangle_u=\tau(u)I_{d_0},\qquad
 \langle B^1\circ c\rangle_u=c(u)I_{d_1},\qquad
 \langle B^0\circ\tau,B^1\circ c\rangle_u=0.
\]
Here independence is conditional on $(\tau,c)$.
Let $M$ and $L$ be common bounds and Lipschitz constants for the four coefficients. All estimates below are conditional on $\mathcal C$.
Since both $\tau, c$ are $1$-Lipschitz and $\diff\tau+\diff c=\diff u$, the usual Picard iteration applies: start with $Z^{(0)}\equiv x$ and, for $k\ge0$, define
\begin{equation*}
 Z_u^{(k+1)} =x+\int_0^u b_0(Z_v^{(k)})\,\diff\tau(v)
       +\int_0^u b_1(Z_v^{(k)})\,\diff c(v) +\int_0^u\sigma_0(Z_v^{(k)})\,\diff B^0_{\tau(v)}
       +\int_0^u\sigma_1(Z_v^{(k)})\,\diff B^1_{c(v)}.
\end{equation*}
For $k\ge1$, set $D_k(u):=\E\!\left[\sup_{s\le u}|Z_s^{(k)}-Z_s^{(k-1)}|^2\,\middle|\,\mathcal C\right].$
Using Cauchy--Schwarz's inequality and the martingale maximal inequality, for $u\le U$ we have
\begin{align*}
 &\E\!\left[\sup_{s\le u}\left|
   \int_0^s[b_0(Z_v^{(k)})-b_0(Z_v^{(k-1)})]\,\diff\tau(v)
   \right|^2\,\middle|\,\mathcal C\right] \le UL^2\int_0^uD_k(v)\,\diff\tau(v),\\
 &\E\!\left[\sup_{s\le u}\left|
   \int_0^s[\sigma_0(Z_v^{(k)})-\sigma_0(Z_v^{(k-1)})]\,\diff B^0_{\tau(v)}
   \right|^2\,\middle|\,\mathcal C\right]\le4L^2\int_0^uD_k(v)\,\diff\tau(v).
\end{align*}
For the activity drift and martingale terms,
\begin{align*}
 &\E\!\left[\sup_{s\le u}\left|
   \int_0^s[b_1(Z_v^{(k)})-b_1(Z_v^{(k-1)})]\,\diff c(v)
   \right|^2\,\middle|\,\mathcal C\right] \le UL^2\int_0^uD_k(v)\,\diff c(v),\\
 &\E\!\left[\sup_{s\le u}\left|
   \int_0^s[\sigma_1(Z_v^{(k)})-\sigma_1(Z_v^{(k-1)})]\,\diff B^1_{c(v)}
   \right|^2\,\middle|\,\mathcal C\right] \le4L^2\int_0^uD_k(v)\,\diff c(v).
\end{align*}
Summing the four terms and using $\diff\tau+\diff c=\diff v$ yields
\[
 D_{k+1}(u) \le C_U\int_0^uD_k(v)\,[\diff\tau(v)+\diff c(v)] =C_U\int_0^uD_k(v)\,\diff v.
\]
Since $D_1(U)\le C M^2(U+U^2)$, after iteration we have a factorially summable bound on each finite interval
\[
 D_{k+1}(U)\le\frac{(C_UU)^k}{k!}D_1(U), \qquad\sum_{k\ge0}D_{k+1}(U)^{1/2}<\infty.
\]
Thus the iterates converge in $L^2(\Omega;C([0,U];\R^d))$ and uniformly almost surely. Using the Cauchy--Schwarz's inequality and the martingale isometry, we can pass the limit, and obtain the existence of the strong solutions.

To show the uniqueness and stability, for two solutions driven by the same clock and Brownian motions, set $H_{x,y}(u) :=\E\!\left[\sup_{s\le u}|Z_s^x-Z_s^y|^2\,\middle|\,\mathcal C\right]$. Then
\begin{align*}
 H_{x,y}(u) \le C|x-y|^2 +C_U\int_0^uH_{x,y}(v)\,[\diff\tau(v)+\diff c(v)] =C|x-y|^2+C_U\int_0^uH_{x,y}(v)\,\diff v.
\end{align*}
Gronwall's inequality gives $H_{x,y}(U)\le C_U|x-y|^2$; taking $x=y$ proves pathwise uniqueness.

Bounded coefficients and the martingale maximal inequalities give \eqref{eq:extended_moments} . For the displacement,
\begin{align*}
 \E\!\left[\sup_{s\le U}|Z_s^x-x|^2\,\middle|\,\mathcal C\right] \le4M^2[\tau(U)^2+c(U)^2] +16M^2[\tau(U)+c(U)] \le C M^2(U^2+U).
\end{align*}
For increments, write $\Delta\tau=\tau(v)-\tau(u)$ and $\Delta c=c(v)-c(u)$. The fourth-moment martingale inequality and $\Delta\tau+\Delta c=v-u$ give
\begin{align*}
 \E\!\left[|Z_v^x-Z_u^x|^4\,\middle|\,\mathcal C\right]
 &\le C M^4[(\Delta\tau)^4+(\Delta c)^4
                   +(\Delta\tau)^2+(\Delta c)^2]\\
 &\le C_U|v-u|^2,\\
 \E\!\left[|Z_v^x-Z_u^x|^2\,\middle|\,\mathcal C\right]
 &\le\left(\E\!\left[|Z_v^x-Z_u^x|^4\,\middle|\,\mathcal C\right]\right)^{1/2}
 \le C_U|v-u|.
\end{align*}
The uniformity in the clocks permits averaging over random clocks and prevents explosion on finite extended-time intervals.
\end{proof}

For later evaluation at random endpoints, the fourth-moment bound also gives a modulus estimate uniform in $x$ and in the clock. For $0<\gamma<1/4$, $0<\delta\le U$, and $\rho>0$,
\[
 \omega_{Z^x,U}(\delta)
 :=\sup_{\substack{u,v\in[0,U]\\|v-u|\le\delta}}|Z_v^x-Z_u^x|,
 \qquad
 \sup_x\Pp\bigl(\omega_{Z^x,U}(\delta)>\rho\bigr)
 \le C_{U,\gamma}\rho^{-4}\delta^{4\gamma}.
\]
Indeed, on the dyadic grid of $[0,U]$ with $2^j$ intervals, the largest adjacent increment has fourth moment at most $C_U2^{-j}$. Summing its $L^4$ norm with weights $2^{j\gamma}$ gives a finite bound on the $\gamma$-H\"older seminorm, since $\sum_j2^{-j(1/4-\gamma)}<\infty$. Chebyshev's inequality yields the displayed estimate, also after averaging over independent random clocks.

\begin{proposition}[Stability under convergence of extended clocks]
\label{prop:extended_stability}
Under Assumption~\ref{ass:slow_coefficients}, let $(\tau_n,c_n)$ and $(\tau,c)$ be deterministic extended clocks as in Lemma~\ref{prop:extended_clocks}, with $\tau_n\to\tau$ locally uniformly and $c_n(u)=u-\tau_n(u)$. In particular, Lemma~\ref{lem:inverse_clock_convergence} provides this convergence for $(\tau_n,c_n)=(\tau^{\varepsilon_n},c^{\varepsilon_n})$ whenever $A_t^{\varepsilon_n}\to S_t$ at every continuity point of $S$. Couple the solutions of \eqref{eq:extended_sde} using the same Brownian motions. Then, for every $U<\infty$,
\begin{equation}
 \sup_x\E\sup_{u\le U}|Z_u^{n,x}-Z_u^x|^2\longrightarrow0.
 \label{eq:extended_stability}
\end{equation}
The conclusion also holds for random clocks converging almost surely locally uniformly, when the entire family of clocks is independent of the Brownian motions.
\end{proposition}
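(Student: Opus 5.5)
Conditionally on the clocks, I subtract the two equations \eqref{eq:extended_sde} for $Z^{n,x}$ and $Z^x$, which are driven by the same Brownian motions $B^0,B^1$. The difference splits into two groups of terms. The first group is the \emph{Lipschitz terms}: $\int_0^u[b_0(Z^{n}_v)-b_0(Z_v)]\,\diff\tau_n(v)$ and the three analogous integrals against $\diff B^0_{\tau_n}$, $\diff c_n$, $\diff B^1_{c_n}$. The second group is the \emph{clock-mismatch terms}: $\int_0^u b_0(Z_v)\,\diff(\tau_n-\tau)(v)$, $\int_0^u\sigma_0(Z_v)\,\diff(B^0_{\tau_n(v)}-B^0_{\tau(v)})$, and the two analogous activity terms. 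The Lipschitz group is handled exactly as in the proof of Lemma~\ref{lem:extended_estimates}: since $\diff\tau_n+\diff c_n=\diff u$, it contributes at most $C_U\int_0^u H_n(v)\,\diff v$, where $H_n(u)=\E\sup_{s\le u}|Z^{n,x}_s-Z^x_s|^2$. Gronwall's inequality then reduces \eqref{eq:extended_stability} to showing that $\sup_x\E\sup_{u\le U}|\mathcal E_n(u)|^2\to0$, where $\mathcal E_n$ is the sum of the mismatch terms.

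For the mismatch terms I would use a Khasminskii-type discretization of extended time. Fix a mesh $\delta$ and set $u_k=k\delta$. I replace $Z_v$ by the frozen value $Z_{u_k}$ on $[u_k,u_{k+1})$. The frozen drift term becomes $\sum_k b_0(Z_{u_k})[(\tau_n-\tau)(u_{k+1}\wedge u)-(\tau_n-\tau)(u_k)]$, which is bounded by $2(U/\delta)M\|\tau_n-\tau\|_{\infty,[0,U]}\to0$ for fixed $\delta$. The frozen noise term is $\sum_k\sigma_0(Z_{u_k})[B^0_{\tau_n(u_{k+1})}-B^0_{\tau(u_{k+1})}-(B^0_{\tau_n(u_k)}-B^0_{\tau(u_k)})]$ plus the analogous partial increment. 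Since the clocks are deterministic, or independent of $B$, each Brownian difference has second moment $|\tau_n(u)-\tau(u)|$ uniformly, and a Brownian modulus-of-continuity bound gives $\E\sup_{u\le U}|B^0_{\tau_n(u)}-B^0_{\tau(u)}|^2\to0$. The one delicate point here is measurability: $Z_{u_k}$ is not independent of these increments. I would avoid this by bounding $|\sigma_0(Z_{u_k})|\le M$ and pulling the absolute value inside the finite sum. That costs a factor $U/\delta$, which is harmless because $\delta$ is fixed while $n\to\infty$.

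The freezing errors are $\int[b_0(Z_v)-b_0(Z_{u_k})]\,\diff\tau_n$ and the corresponding stochastic integrals against $\diff B^0_{\tau_n}$, together with the same expressions with $\tau$ in place of $\tau_n$; each is a genuine integral against a single clock. Using the Lipschitz property, the $1$-Lipschitz bounds on the clocks, the martingale isometry, and the increment bound $\sup_x\E|Z_v-Z_{u_k}|^2\le C_U\delta$ from Lemma~\ref{lem:extended_estimates}, these errors are at most $C_U\delta$ uniformly in $n$ and $x$. Letting $n\to\infty$ first and then $\delta\downarrow0$ proves \eqref{eq:extended_stability}. The bounds depend only on $M$, $L$ and $U$, so they are uniform in $x$. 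For random clocks independent of $(B^0,B^1)$ that converge almost surely locally uniformly, I apply the deterministic result conditionally on the clock σ-field $\mathcal C$ and then use dominated convergence. This is legitimate because the conditional quantity is bounded by $C(U+U^2)$, by \eqref{eq:extended_moments}.

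I expect the main obstacle to be the stochastic mismatch term $\int\sigma_0(Z_v)\,\diff(B^0_{\tau_n}-B^0_\tau)$. The difference $B^0_{\tau_n}-B^0_\tau$ is not a martingale in a common filtration, because $\tau_n$ and $\tau$ interleave differently. This is precisely why the discretization together with the crude supremum bound is needed rather than an isometry argument. If the modulus estimate for $B^0\circ\tau_n-B^0\circ\tau$ turns out to be awkward, I would fall back on Lévy's modulus of continuity for $B^0$ on $[0,U]$, combined with $\|\tau_n-\tau\|_\infty\to0$.
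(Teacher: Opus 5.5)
There is a genuine gap in your Gronwall step. Your Lipschitz group contains $\int_0^u[\sigma_0(Z^{n,x}_v)-\sigma_0(Z^x_v)]\,\diff B^0_{\tau_n(v)}$ and the analogous term against $B^1\circ c_n$. You bound these by the martingale maximal inequality ``exactly as in Lemma~\ref{lem:extended_estimates}''. In that lemma, however, both solutions were driven by the \emph{same} clock, so the integrand was adapted to the integrator's filtration.

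Here $Z^x_v$ is a functional of $B^0$ on $[0,\tau(v)]$ and of $B^1$ on $[0,c(v)]$. By contrast, $B^0\circ\tau_n$ and $B^1\circ c_n$ are martingales only for the filtration generated by $B^0$ on $[0,\tau_n(v)]$ and $B^1$ on $[0,c_n(v)]$. Since $\tau_n(v)+c_n(v)=\tau(v)+c(v)=v$, at every $v$ where the clocks differ either $\tau(v)>\tau_n(v)$ or $c(v)>c_n(v)$. So $Z^x_v$ anticipates the future increments of one of the two $n$th-clock integrators.

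Your splitting into Lipschitz and mismatch terms presupposes $\int\sigma_0(Z^x_v)\,\diff B^0_{\tau_n(v)}$ and its $B^1$ analogue. These are in general anticipating integrals rather than It\^o integrals. The isometry-type bound $4L^2\int_0^u\E|Z^{n,x}_v-Z^x_v|^2\,\diff\tau_n(v)$ is not available for such integrands.

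You spotted exactly this obstruction for the mismatch term, but it equally affects two other places. It affects the Lipschitz group. It also affects the freezing error $\int[\sigma_0(Z^x_v)-\sigma_0(Z^x_{u_k})]\,\diff B^0_{\tau_n(v)}$. That error is not ``a genuine integral against a single clock'': the integrator comes from the $n$th clock and the integrand from the limit clock. So its claimed $C_U\delta$ bound is unjustified too; its companion against $B^0\circ\tau$ is fine.

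The way out, which is the paper's route, is never to pair a solution with the other clock's Brownian integrator.
\begin{itemize}
\item Compare $Z^{n,x}$ with its own Euler scheme $Z^{n,x,h}$, and $Z^x$ with $Z^{x,h}$, each by Gronwall in its own clock filtration. This is legitimate and gives $C_Uh$ uniformly in the clock and in $x$.
\item Compare the two Euler schemes \emph{pathwise}. For fixed $h$, each scheme is a recursion, Lipschitz in the state, driven by finitely many increments of $(\tau_n,c_n,B^0\circ\tau_n,B^1\circ c_n)$. A discrete Gronwall with random factors $(1+2L|\Delta_kV_n|)$ then yields almost sure convergence uniformly in $x$.
\item Upgrade to $L^2$ by dominated convergence, using an envelope built from $\sup_{s\le U}|B^0_s|+\sup_{s\le U}|B^1_s|$.
\item Let $n\to\infty$ and then $h\downarrow0$.
\end{itemize}

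Your pathwise treatment of the frozen mismatch sums is sound. The crude bound $M\sum_k|\cdot|$, with the harmless $U/\delta$ factor and the Brownian modulus of continuity, is essentially this fixed-mesh comparison. Your conditioning argument for random clocks is also fine. What must be abandoned is the continuous-time Gronwall applied directly to $Z^{n,x}-Z^x$.
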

\begin{proof}
We compare each solution with its Euler approximation, compare the two Euler recursions at fixed mesh, and finally remove the mesh. This avoids relying on uniform convergence of Brownian paths to pass directly through stochastic integrals.

For a mesh of size $0<h\le U$, we define the Euler approximation for the $n$th clock by
\[
 \begin{aligned}
 Z_u^{n,x,h}
 &=x+\int_0^u b_0(Z_{h\lfloor v/h\rfloor}^{n,x,h})\,\diff\tau_n(v)
       +\int_0^u b_1(Z_{h\lfloor v/h\rfloor}^{n,x,h})\,\diff c_n(v)\\
 &\quad+\int_0^u\sigma_0(Z_{h\lfloor v/h\rfloor}^{n,x,h})\,\diff B^0_{\tau_n(v)}
       +\int_0^u\sigma_1(Z_{h\lfloor v/h\rfloor}^{n,x,h})\,\diff B^1_{c_n(v)}.
 \end{aligned}
\]
We also define the limit-clock approximation $Z^{x,h}$ by the same formula without $n$. Thus $Z^{x,h}$ uses the coefficient value at the left endpoint $h\lfloor v/h\rfloor$ in each of its four integrals. The second-moment increment estimate of Lemma~\ref{lem:extended_estimates} gives
\[
 \sup_x\E|Z_v^x-Z_{h\lfloor v/h\rfloor}^x|^2\le C_Uh.
\]
For the $n$th equation the same estimate holds conditionally on $\mathcal C_n:=\sigma(\tau_n,c_n)$. To subtract the equations, set $e_u^{n,x,h}:=Z_u^{n,x}-Z_u^{n,x,h}$ and split the coefficient difference as
\[
  b_i(Z_v^{n,x})-b_i(Z_{h\lfloor v/h\rfloor}^{n,x,h})=[b_i(Z_v^{n,x})-b_i(Z_{h\lfloor v/h\rfloor}^{n,x})]  +[b_i(Z_{h\lfloor v/h\rfloor}^{n,x})-b_i(Z_{h\lfloor v/h\rfloor}^{n,x,h})],
\]
and similarly for $\sigma_i$, $i=0,1$. The first difference is the local increment error, while the second is bounded by $L|e_{h\lfloor v/h\rfloor}^{n,x,h}|$. 
Subtracting the Euler equation from \eqref{eq:extended_sde}, applying the martingale maximal inequality and Lipschitz continuity yields, as in the Picard estimate,
\begin{align*}
 &\E\!\left[\sup_{s\le u}|Z_s^{n,x}-Z_s^{n,x,h}|^2\,\middle|\,\mathcal C_n\right] \\
 \le & C_U\int_0^u
  \left\{\E\!\left[|Z_v^{n,x}-Z_{h\lfloor v/h\rfloor}^{n,x}|^2\,\middle|\,\mathcal C_n\right] +\E\!\left[\sup_{s\le v}|e_s^{n,x,h}|^2\,\middle|\,\mathcal C_n\right]\right\}
    [\diff\tau_n(v)+\diff c_n(v)] \\
 \le & C_U\int_0^u \E\!\left[\sup_{s\le  v}|e_s^{n,x,h}|^2\,\middle|\,\mathcal C_n\right]\,\diff v+C_Uh.
\end{align*}
Gronwall inequality therefore gives $\E\!\left[\sup_{s\le U}|e_s^{n,x,h}|^2\,\middle|\,\mathcal C_n\right]\le C_Uh$. Thus
\begin{equation}\label{eq:uniform_euler_error1}
 \sup_x\E\sup_{u\le U}|Z_u^{n,x}-Z_u^{n,x,h}|^2\le 2C_Uh.
\end{equation}
The same calculation applies to the limit clock, so that
\begin{equation}\label{eq:uniform_euler_error2}
 \sup_x\E\sup_{u\le U}|Z_u^x-Z_u^{x,h}|^2\le C_Uh.
\end{equation}
The same estimate holds for every $n$, with the same constant.

Let $N=\lceil U/h\rceil$ and $u_k=(kh)\wedge U$, $0\le k\le N$. With $\Delta_k\tau_n=\tau_n(u_{k+1})-\tau_n(u_k)$, and similarly for $c_n$, the grid recursion is
 \begin{align*}
  Z_{u_{k+1}}^{n,x,h} = & Z_{u_k}^{n,x,h} +b_0(Z_{u_k}^{n,x,h})\Delta_k\tau_n +b_1(Z_{u_k}^{n,x,h})\Delta_k c_n \\ 
    & +\sigma_0(Z_{u_k}^{n,x,h})[B^0_{\tau_n(u_{k+1})}-B^0_{\tau_n(u_k)}] +\sigma_1(Z_{u_k}^{n,x,h})
       [B^1_{c_n(u_{k+1})}-B^1_{c_n(u_k)}].
 \end{align*}
For fixed $h$, the Euler recursion is a finite composition of continuous functions of the increments of $\tau,c,B^0\circ\tau,B^1\circ c$. Uniform convergence of $\tau_n,c_n$ and continuity of the Brownian paths imply convergence of these increments, since all Brownian time arguments lie in the fixed interval $[0,U]$. Boundedness and Lipschitz continuity of the coefficients make this convergence uniform in the starting point $x$. Denote
\[
 V_n=(\tau_n,c_n,B^0_{\tau_n(\cdot)},B^1_{c_n(\cdot)}),\qquad
 V=(\tau,c,B^0_{\tau(\cdot)},B^1_{c(\cdot)}),
\]
For these grid points, let $\Delta_kV_n=V_n(u_{k+1})-V_n(u_k)$, and similarly for $V$. With common coefficient bound $M$ and Lipschitz constant $L$, the recursion gives
\[
 \sup_x|Z_{u_{k+1}}^{n,x,h}-Z_{u_{k+1}}^{x,h}| \le(1+2L|\Delta_kV_n|)\sup_x|Z_{u_k}^{n,x,h}-Z_{u_k}^{x,h}|+M\varepsilon_k^n,
\]
where $\varepsilon_k^n:=2|\Delta_kV_n-\Delta_kV|\longrightarrow0$ almost surely.
Iteration over finitely many grid points gives a bound independent of $x$ and tending to zero. The same argument controls the interpolation within each mesh interval, replacing $u_{k+1}$ by $u\in[u_k,u_{k+1}]$ and using uniform convergence of $V_n$ to $V$.
For domination, bounded coefficients give the simpler bound
\[
 \sup_x\sup_{u\le U}|Z_u^{n,x,h}-Z_u^{x,h}|
 \le G_h:=2M\Big(U+2N \Big(\sup_{s\le U}|B_s^0|+\sup_{s\le U}|B_s^1|\Big)\Big).
\]
Here $\E G_h^2<\infty$, uniformly in $n$ and in the clocks, for fixed $h$. Dominated convergence gives
\[
 \sup_x\E\sup_{u\le U}|Z_u^{n,x,h}-Z_u^{x,h}|^2\longrightarrow0.
\]
Then 
\begin{align*}
 &\sup_x\E\sup_{u\le U}|Z_u^{n,x}-Z_u^x|^2 \\
 \le & 3\sup_x\E\sup_{u\le U}|Z_u^{n,x}-Z_u^{n,x,h}|^2+3\sup_x\E\sup_{u\le U}|Z_u^{n,x,h}-Z_u^{x,h}|^2 + 3\sup_x\E\sup_{u\le U}|Z_u^{x,h}-Z_u^x|^2.
\end{align*}
Combining with \eqref{eq:uniform_euler_error1} and \eqref{eq:uniform_euler_error2}, letting $n\to\infty$ at fixed $h$ gives
\[
 \limsup_{n\to\infty}\sup_x\E\sup_{u\le U}|Z_u^{n,x}-Z_u^x|^2\le C_Uh.
\]
Then let $h\downarrow0$ to obtain \eqref{eq:extended_stability}. 
The proof remains valid for random clocks converging almost surely and independent of the Brownian motions, because the Euler bounds and the dominating Brownian moments are uniform in the clocks. Specifically, condition on the entire clock family to obtain the Euler error for each equation in its own clock filtration; the fixed-mesh comparison is pathwise, and its integrable bound permits averaging over the clocks.
\end{proof}

Now we recover $Z_{t+A_t^\varepsilon}^{\varepsilon,x}$ to the physical-time dynamics $X_t^{\varepsilon,x}$.

\begin{lemma}\label{lem:physical_extended_representation}
Under Assumptions~\ref{ass:slow_coefficients}, \ref{ass:local_activity}, and~\ref{ass:decoupling}, the microscopic SDE has a unique nonexplosive strong solution, and $X_t^{\varepsilon,x} \overset{\mathrm{law}}{=} Z_{t+A_t^\varepsilon}^{\varepsilon,x}$, where $Z^{\varepsilon,x}$ solves \eqref{eq:extended_sde} with clocks $(\tau^\varepsilon,c^\varepsilon)$. 
Conditionally on the microscopic clock $\mathcal C^\varepsilon:=\sigma(A_t^\varepsilon:t\ge0)$, for every $T<\infty$,
\begin{equation}
    \Law\!\left( (X_t^{\varepsilon,x})_{0\le t\le T}\,\middle|\,\mathcal C^\varepsilon \right)
    =
    \Law\!\left( (Z_{t+A_t^\varepsilon}^{\varepsilon,x})_{0\le t\le T} \,\middle|\,\mathcal C^\varepsilon \right)
\end{equation}
as probability measures on $C([0,T];\R^d)$.
\end{lemma}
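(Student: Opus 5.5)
Existence and uniqueness come first, and I will work conditionally on the environment. By Assumption~\ref{ass:decoupling}, conditionally on $\omega$ the rate $t\mapsto r_t^\varepsilon(\omega)$ is a deterministic, nonnegative, locally integrable function (Assumption~\ref{ass:local_activity}). The microscopic equation is then an SDE with time-dependent coefficients $b_0+r_tb_1$ and $(\sigma_0,\sqrt{r_t}\sigma_1)$. These are Lipschitz in $x$ with time-dependent constant $L(1+r_t)$ for the drift and $L^2(1+r_t)$ for the squared diffusion coefficient, and both constants are locally integrable in $t$. The standard Picard/Gronwall argument therefore runs with $\diff t$ replaced by the measure $(1+r_t)\,\diff t = \diff F^\varepsilon(t)$. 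This is exactly the Picard estimate of Lemma~\ref{lem:extended_estimates} written in physical time, and it gives a unique nonexplosive strong solution with continuous paths. Joint measurability in $(\omega,\omega_W)$ follows because the Picard iterates are measurable functionals.

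For the representation, I fix the environment and define the extended clocks $(\tau^\varepsilon,c^\varepsilon)$. Since $A^\varepsilon$ is continuous and absolutely continuous, $F^\varepsilon$ is a continuous, strictly increasing bijection of $[0,\infty)$ with inverse $\tau^\varepsilon$. Moreover, $c^\varepsilon(F^\varepsilon(t))=A^\varepsilon_t$. I then set
\[
B^0_s:=W^0_s,\qquad B^1_{A}:=\int_0^{\tau(A)} \sqrt{r_t^\varepsilon}\,\diff W^1_t \text{ suitably,}
\]
that is, $B^1$ is the Dambis--Dubins--Schwarz Brownian motion of the continuous martingale $M_t=\int_0^t\sqrt{r^\varepsilon_s}\,\diff W^1_s$. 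The bracket of $M$ is $A^\varepsilon_t$, so $M_t=B^1_{A_t^\varepsilon}$. If $A^\varepsilon$ is bounded, or constant on intervals where $r=0$, I enlarge the space with an independent Brownian motion, as is standard. The cross bracket of $W^0$ and $M$ is zero, so $B^0$ and $B^1$ are independent Brownian motions (Knight's theorem). Conditionally on the clock they are independent of $\mathcal C^\varepsilon$, because the environment is independent of $(W^0,W^1)$.

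Next I define $\tilde Z_u:=X_{\tau^\varepsilon(u)}$ and verify that it solves \eqref{eq:extended_sde} with clocks $(\tau^\varepsilon,c^\varepsilon)$ and drivers $(B^0,B^1)$. The drift terms follow from the deterministic change of variables $t=\tau^\varepsilon(v)$:
\[
\int_0^{\tau^\varepsilon(u)} g(X_t)\,\diff t=\int_0^u g(\tilde Z_v)\,\diff\tau^\varepsilon(v),\qquad \int_0^{\tau^\varepsilon(u)} g(X_t)r_t\,\diff t=\int_0^u g(\tilde Z_v)\,\diff c^\varepsilon(v).
\]
The stochastic integrals follow from the time-change formula for stochastic integrals under the continuous deterministic time change $\tau^\varepsilon$: $\int_0^{\tau^\varepsilon(u)}\sigma_1(X_t)\,\diff M_t=\int_0^u\sigma_1(\tilde Z_v)\,\diff (M\circ\tau^\varepsilon)_v$, and $M\circ\tau^\varepsilon=B^1\circ c^\varepsilon$. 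The $\sigma_0$ term is handled in the same way. Pathwise uniqueness for \eqref{eq:extended_sde}, from Lemma~\ref{lem:extended_estimates}, then combines with Yamada--Watanabe. Conditionally on $\mathcal C^\varepsilon$, the law of $\tilde Z$ equals the law of the strong solution $Z^{\varepsilon,x}$ driven by any pair of independent Brownian motions. Evaluating at $u=F^\varepsilon(t)=t+A^\varepsilon_t$ gives $X_t=\tilde Z_{t+A_t^\varepsilon}$ pathwise. Since $t\mapsto F^\varepsilon(t)$ is a $\mathcal C^\varepsilon$-measurable continuous map, the conditional laws on $C([0,T];\R^d)$ coincide. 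Integrating over the environment gives the unconditional identity in law.

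I expect the main obstacle to be the filtration bookkeeping in the time-change step. $B^1$ is built from the same noise, and for the time-change formula $\tilde Z$ must be adapted to the filtration generated by $(B^0\circ\tau^\varepsilon, B^1\circ c^\varepsilon)$ together with $\mathcal C^\varepsilon$. I plan to handle this by working throughout under the conditional (quenched) law, where the clocks are deterministic. There the time-changed filtration $\mathcal F^W_{\tau^\varepsilon(u)}$ is a legitimate filtration, $B^0\circ\tau^\varepsilon$ and $B^1\circ c^\varepsilon$ are continuous martingales in it with brackets $\tau^\varepsilon I$ and $c^\varepsilon I$, and Yamada--Watanabe applies to the equation driven by these martingales. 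The degenerate case where $A^\varepsilon$ is eventually constant is covered by the auxiliary independent Brownian extension.
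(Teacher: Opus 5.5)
Your proof is correct, but it runs the paper's argument in the opposite direction.

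\textbf{The paper's route.} It starts from the extended solution $Z^{\varepsilon,x}$ and changes variables along $F^\varepsilon$. This shows that $t\mapsto Z^{\varepsilon,x}_{F^\varepsilon(t)}$ solves the physical equation driven by $B^0$ and the martingale $B^1\circ A^\varepsilon$, whose conditional law is that of $\int_0^t\sqrt{r_s^\varepsilon}\,\diff W_s^1$. It then invokes uniqueness in law for the physical SDE. That uniqueness comes from the same Picard argument with control $\diff t+\diff A_t^\varepsilon$ that gives existence.

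\textbf{Your route.} You pull the given solution back along $\tau^\varepsilon$ and build the extended drivers by Dambis--Dubins--Schwarz. You then invoke pathwise uniqueness for \eqref{eq:extended_sde} from Lemma~\ref{lem:extended_estimates}, followed by a Yamada--Watanabe transfer.

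\textbf{What each buys.} The paper's version is shorter, because one uniqueness statement serves both well-posedness and the law identity. Yours costs the DDS construction and a second uniqueness result, but it yields more. You get a pathwise identity $X^{\varepsilon,x}_t=\tilde Z_{t+A^\varepsilon_t}$ on the original space, where $\tilde Z=X^{\varepsilon,x}\circ\tau^\varepsilon$ is itself a solution of \eqref{eq:extended_sde} with explicit drivers. This is exactly what reconciles, jointly with the clock, the paper's two uses of $Z^{\varepsilon,x}$: as $X^{\varepsilon,x}\circ\tau^\varepsilon$ before Theorem~\ref{thm:full_homogenization}, and as the solution of \eqref{eq:extended_sde}. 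The paper only asserts this identification in its final sentence. Both routes need the same Yamada--Watanabe-type step for equations driven by time-changed Brownian motions with deterministic quenched brackets, and you identify it correctly.

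\textbf{Minor slips.}
\begin{itemize}
\item In your displayed definition of $B^1$, the inverse should be the right-continuous inverse of $A^\varepsilon$, not $\tau^\varepsilon$ (which inverts $F^\varepsilon$).
\item The auxiliary Brownian motion is needed only when $A^\varepsilon_\infty<\infty$. Flat stretches of $A^\varepsilon$ are harmless for DDS, since $M$ is almost surely constant on them.
\item To pass from conditioning on the environment to conditioning on $\mathcal C^\varepsilon$, note that the quenched law depends on $\omega$ only through the path $A^\varepsilon$. Indeed, $r^\varepsilon$ enters only via $\diff A^\varepsilon$ and the bracket of $M$.
\end{itemize}
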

\begin{proof}
Since $A^\varepsilon$ is absolutely continuous, change variables in the finite-variation and martingale integrals of \eqref{eq:extended_sde}. The physical martingale has quadratic variation $t$, the activity martingale has quadratic variation $A_t^\varepsilon$, and their cross variation is zero. Conditional on the environment, $B^1_{A_t^\varepsilon}$ has the law of the integral of $\sqrt{r_s^\varepsilon}$ against an independent Brownian motion, giving \eqref{eq:micro_sde}. The Picard argument of Lemma~\ref{lem:extended_estimates}, applied with control $\diff t+\diff A_t^\varepsilon$ and the prescribed microscopic Brownian motions, gives strong existence and pathwise uniqueness. Local finiteness of $t+A_t^\varepsilon$ prevents explosion. Conditional uniqueness then gives $X_t^{\varepsilon,x} \overset{\mathrm{law}}{=} Z_{t+A_t^\varepsilon}^{\varepsilon,x}$. Since $F^\varepsilon$ is continuous and strictly increasing onto $[0,\infty)$, $F^\varepsilon(\tau^\varepsilon(u))=u$. Composing the conditional path-law identity with $\tau^\varepsilon$ therefore identifies the canonical process $X^{\varepsilon,x}\circ\tau^\varepsilon$ with the solution $Z^{\varepsilon,x}$ of \eqref{eq:extended_sde}, jointly with the inverse clock.
\end{proof}

Define the limiting process by $\bar X_t^x=Z_{t+S_t}^x$.
It has c\`adl\`ag paths because $Z$ is continuous and $t+S_t$ is c\`adl\`ag and increasing. It is finite at finite times because $S_t<\infty$ almost surely.

\subsection{Passage to the limit and generator identification}
\label{subsec:main_proof}

We couple the activity measures, transfer their convergence through Lemma~\ref{lem:inverse_clock_convergence} and Proposition~\ref{prop:extended_stability}, and then identify the Feller limit by finite-burst approximation. The extended-time representation and the deterministic clock approximation needed for this transfer were established in Section~\ref{subsec:extended_stability}.

\begin{proof}[Proof of Theorem~\ref{thm:full_homogenization}]
Lemma~\ref{lem:physical_extended_representation} gives existence and uniqueness of the microscopic solution. We prove the convergence and identify the limiting dynamics in the following steps.

Apply the preceding construction to $A^\varepsilon$ and $S$. The random-measure space $\cM([0,\infty))$ is Polish under its vague topology. Along any sequence $\varepsilon_n\downarrow0$, a Skorokhod representation couples the activity measures with $\diff S$ so that they converge almost surely. At every deterministic time $t$ at which $S$ has no jump, this gives
$A_t^{\varepsilon_n}\to S_t$ almost surely. In particular this holds simultaneously at any rational times, and at any prescribed finite collection of observation times. 
On the same event, monotonicity extends the convergence from rational times to every continuity point of $S$. Lemma~\ref{lem:inverse_clock_convergence} therefore gives locally uniform convergence of the inverse clocks, and Proposition~\ref{prop:extended_stability} applies after adding independent Brownian motions.

Now we show the convergence of the extended-time parametrized paths.
By Lemma~\ref{lem:physical_extended_representation}, the coupled extended solutions have the joint laws of the canonical parametrizations defined before Theorem~\ref{thm:full_homogenization}. For every $U<\infty$, the inverse-clock convergence and Proposition~\ref{prop:extended_stability} give
\begin{equation}\label{eq:coupled_extended_path_convergence}
 \E\|\tau^{\varepsilon_n}-\tau\|_{\infty,[0,U]}^2
 +\sup_x\E\|Z^{\varepsilon_n,x}-Z^x\|_{\infty,[0,U]}^2
 \longrightarrow0.
\end{equation}
The first term tends to zero by almost-sure inverse-clock convergence and dominated convergence, since $0\le\tau^{\varepsilon_n}(u),\tau(u)\le u$. Thus the pairs $(\tau^{\varepsilon_n},Z^{\varepsilon_n,x})$ converge in probability on the coupling uniform metric 
\[d_U = \sup_{0\le u\le U}|\tau(u)-\widetilde\tau(u)| + \sup_{0\le u\le U}|Z_u-\widetilde Z_u|.\] 
Note that the metric $d_{\mathrm{loc}}=\sum_{m=1}^\infty2^{-m}(1\wedge d_m)$ induces the locally uniform topology. 
Truncating this series and using the preceding convergence for each integer $U=m$ gives convergence in probability under $d_{\mathrm{loc}}$, hence weak convergence of the original joint laws. Every null sequence admits this argument, and the limiting joint law is determined by the law of $S$ and the diffusion coefficients. This proves \eqref{eq:main_extended_path_limit} as $\varepsilon\downarrow0$.

Now we show the finite-dimensional convergence \eqref{eq:main_fdd_limit} and strong operator convergence.
Fix $t_1,\ldots,t_m$. The random endpoints $t_j+A_{t_j}^{\varepsilon_n}$ are tight, and converge to $t_j+S_{t_j}$. Localize these endpoints to $[0,U]$. On that event 
\[E_{n,U,\delta} := \left\{\max_{j\le m} (t_j+A_{t_j}^{\varepsilon_n})\vee (t_j+S_{t_j}) \le U\right\} \cap \left\{\max_{j\le m} \vert{}A_{t_j}^{\epsilon_n} - S_{t_j}\vert{} \le \delta \right\},\]
we have
\[
\max_{j\le m} |Z_{t_j+A_{t_j}^{\varepsilon_n}}^{\varepsilon_n,x}-Z_{t_j+S_{t_j}}^x| \le \|Z^{\varepsilon_n,x}-Z^x\|_{\infty,[0,U]} + \sup_{\substack{u,v\le U\\|u-v|\le\delta}}|Z_u^x-Z_v^x|,
\]
The first term in above inequality is bounded by the stability estimate \eqref{eq:extended_stability}. The fourth-moment estimate in \eqref{eq:extended_moments} makes the modulus of continuity $\sup_{\substack{u,v\le U\\|u-v|\le\delta}}|Z_u^x-Z_v^x|$ tight uniformly in $x$. Letting $n\to\infty$, then $U\to\infty$, gives
\begin{equation}
 \sup_x\E\left[1\wedge
 \max_{j\le m}|X_{t_j}^{\varepsilon_n,x}-\bar X_{t_j}^x|\right]\longrightarrow0
 \label{eq:coupled_fdd_convergence}
\end{equation}
on the coupling. This proves \eqref{eq:main_fdd_limit}.
Moreover, for every $f\in C_0(\R^d)$, $t \ge 0$, and $\bar P_tf(x)=\E[f(\bar X_t^x)]$, the convergence \eqref{eq:coupled_fdd_convergence} also gives
\begin{equation}
 \|P_t^\varepsilon f-\bar P_tf\|_{\infty}\longrightarrow0.
 \label{eq:strong_annealed_convergence}
\end{equation}
Every subsequence $\{(\tau^{\varepsilon_n},Z^{\varepsilon_n,x})\}_{n \ge 1}$ admits the same argument, and the limiting construction has a law determined only by the stable subordinator $S$, Brownian motions $B^0_t, B^1_t$, and the coefficients. Therefore these limits hold as $\varepsilon\downarrow0$, without subsequence extraction in the conclusion.

Now we show the limit operator $(\bar P_t)_{t\ge0}$ is a Markov semigroup with Feller property.
Let $S_t^\eta=\sum_{s\le t}\Delta S_s\ind_{\{\Delta S_s>\eta\}}$. Since the stable subordinator $S$ has finite jump variation on compact intervals without drift,
\[
 \sup_{t\le H}|S_t-S_t^\eta|
 =\sum_{s\le H}\Delta S_s\ind_{\{\Delta S_s\le\eta\}}
 \longrightarrow0\quad\hbox{almost surely}.
\]
Lemma~\ref{lem:inverse_clock_convergence} and Proposition~\ref{prop:extended_stability} thus prove
$\|\bar P_t^\eta f-\bar P_tf\|_{\infty}\to0$ for each $t \ge 0$ and $f\in C_0(\R^d)$.
For this finite-jump clock, denote its extended clocks by $(\tau^\eta,c^\eta)$, let $Z^{\eta,x}$ solve \eqref{eq:extended_sde} with these clocks, and set
$\bar X_t^{\eta,x}=Z_{t+S_t^\eta}^{\eta,x}$.
Fix $t>0$ and condition on the entire clock $S^\eta$. Almost surely, its jumps in $(0,t)$ can be listed as
$0<T_1<\cdots<T_m<t$, with sizes $J_j=\Delta S_{T_j}^\eta>\eta$; there is no jump at the fixed time $t$.
Thus $S_t^\eta=\sum_{i=1}^mJ_i$.

Between the $j$th and $(j+1)$st jumps, for $1\le j<m$, the corresponding extended-time interval is $\left[T_j+\sum_{i=1}^jJ_i,\ T_{j+1}+\sum_{i=1}^jJ_i\right]$.
On this interval,
\[
 \tau^\eta(u)=u-\sum_{i=1}^jJ_i,\qquad
 c^\eta(u)=\sum_{i=1}^jJ_i,\qquad \diff\tau^\eta(u)=\diff u,\qquad \diff c^\eta(u)=0.
\]
Consequently, with
$Y_v^{0,j}=Z_{T_j+\sum_{i=1}^jJ_i+v}^{\eta,x}$ and
$\widetilde B_v^{0,j}=B^0_{T_j+v}-B^0_{T_j}$, equation~\eqref{eq:extended_sde} reduces to
\[
 Y_v^{0,j}=Y_0^{0,j}
 +\int_0^v b_0(Y_w^{0,j})\,\diff w
 +\int_0^v\sigma_0(Y_w^{0,j})\,\diff\widetilde B_w^{0,j},
 \qquad 0\le v\le T_{j+1}-T_j.
\]
The conditional transition of this diffusion across interval $[0,T_{j+1}-T_j]$ is $P_{T_{j+1}-T_j}^{(0)}$.
When $m\ge1$, the initial background interval is $[0,T_1]$, where
$\tau^\eta(u)=u$ and $c^\eta(u)=0$; its transition is $P_{T_1}^{(0)}$.
The final background interval is $\left[T_m+\sum_{i=1}^mJ_i,\ t+\sum_{i=1}^mJ_i\right]$,
where $\tau^\eta(u)=u-\sum_{i=1}^mJ_i$ and $c^\eta(u)=\sum_{i=1}^mJ_i$; the same reduction gives the transition $P_{t-T_m}^{(0)}$.
If $m=0$, then $\tau^\eta(u)=u$ and $c^\eta(u)=0$ throughout $[0,t]$, and the whole evolution is $P_t^{(0)}$.

At the $j$th jump, for $1\le j\le m$, the extended-time interval is $\left[T_j+\sum_{i=1}^{j-1}J_i,\ T_j+\sum_{i=1}^jJ_i\right]$.
Here
\[
 \tau^\eta(u)=T_j,\qquad c^\eta(u)=u-T_j,
 \qquad \diff\tau^\eta(u)=0,\qquad \diff c^\eta(u)=\diff u.
\]
Hence, with
$Y_v^{1,j}=Z_{T_j+\sum_{i=1}^{j-1}J_i+v}^{\eta,x}$ and
$\widetilde B_v^{1,j}=B^1_{\sum_{i=1}^{j-1}J_i+v}-B^1_{\sum_{i=1}^{j-1}J_i}$, the same equation reduces to
\[
 Y_v^{1,j}=Y_0^{1,j}
 +\int_0^v b_1(Y_w^{1,j})\,\diff w
 +\int_0^v\sigma_1(Y_w^{1,j})\,\diff\widetilde B_w^{1,j},
 \qquad 0\le v\le J_j.
\]
Thus the activity diffusion runs for operational duration $J_j$, from
$Y_0^{1,j}=\bar X_{T_j-}^{\eta,x}$ to
$Y_{J_j}^{1,j}=\bar X_{T_j}^{\eta,x}$, and its conditional transition is $P_{J_j}^{(1)}$.

Conditionally on $S^\eta$, when $m\ge1$, the background Brownian time intervals
$[0,T_1]$, $[T_j,T_{j+1}]$ for $1\le j<m$, and $[T_m,t]$, as well as the activity Brownian time intervals $\left[\sum_{i=1}^{j-1}J_i,\ \sum_{i=1}^jJ_i\right]$, $1\le j\le m$,
are fixed; if $m=0$, the only interval is $[0,t]$ for the background motion.
Their increments of Brownian motions are independent of the preceding evolution, since the clock is independent of $B^0,B^1$, and the two Brownian motions are independent. Then iterating conditional expectations along the successive background and activity intervals, for $f\in C_0(\R^d)$ and $m\ge2$, we have
\begin{align*}
 \E\!\left[f(\bar X_t^{\eta,x})\,\middle|\,S^\eta\right]
 =\bigl(\cP_{\mathbf T,\mathbf J}(t)f\bigr)(x)=\Bigl(P_{T_1}^{(0)}P_{J_1}^{(1)}
 P_{T_2-T_1}^{(0)}P_{J_2}^{(1)}\cdots P_{T_m-T_{m-1}}^{(0)}P_{J_m}^{(1)}P_{t-T_m}^{(0)}f\Bigr)(x),
\end{align*}
which is defined by \eqref{eq:chronological_product}.
Taking expectation over the clock, whose retained jumps have Poisson intensity $\diff s\,\nu(\diff r)\ind_{\{r>\eta\}}$, yields
\[
 \E[f(\bar X_t^{\eta,x})]
 =\E\!\left[\bigl(\cP_{\mathbf T,\mathbf J}(t)f\bigr)(x)\right]
 =\bar P_t^\eta f(x),
\]
where the last equality is the definition \eqref{eq:finite_burst_semigroup}.

Consequently $\bar P_t$ maps $C_0$ into $C_0$, is positive and contractive, and inherits the semigroup identity. In detail,
\[
 \|\bar P_t^\eta\bar P_s^\eta f-\bar P_t\bar P_sf\|_{\infty}
 \le\|\bar P_s^\eta f-\bar P_sf\|_{\infty}
   +\|(\bar P_t^\eta-\bar P_t)\bar P_sf\|_{\infty}\longrightarrow0.
\]
The left-hand product equals $\bar P_{t+s}^\eta f$, proving $\bar P_{t+s}=\bar P_t\bar P_s$. Conservativity of probability follows from the finite-valued construction the limiting process $\bar X_t^x=Z_{t+S_t}^x$, rather than from testing a $C_0$-limit on the constant function $f(\cdot) \equiv 1$.

To prove strong continuity at $t=0$, fix $U>0$ and use
\[
 \sup_x\Pp(|\bar X_t^x-x|>z)
 \le\Pp(t+S_t>U)+z^{-2}C(U+U^2).
\]
First let $t\downarrow0$, using stochastic continuity of $S$ at zero, and then let $U\downarrow0$. Bounded uniform continuity of $f$ gives $\|\bar P_tf-f\|_{\infty}\to0$. Thus $(\bar P_t)_{t\ge0}$ is a Feller semigroup. 

For every finite times $0\le t_1<\cdots<t_k\le t$, we consider the test function $H(z):=h\bigl(z(t_1),\ldots,z(t_k)\bigr)$, 
where $h\in C_b((\R^d)^k)$. Then for every $f\in C_0(\R^d)$ and $t>0$, we have the identity
\[
 \E[H(\bar X^\eta)f(\bar X_{t+s}^\eta)]
 =\E[H(\bar X^\eta)\bar P_s^\eta f(\bar X_t^\eta)].
\]
Thus transition probabilities agrees with that of the limiting process $\bar X_t^x=Z_{t+S_t}^x $.
Since $f,\bar P_sf\in C_0(\R^d)$, finite-dimensional convergence as $\eta \to 0$ gives
\[ \E[H(\bar X^\eta)\bar P_sf(\bar X_t^\eta)]
 \longrightarrow\E[H(\bar X)\bar P_sf(\bar X_t)] .\]
Moreover, as $\eta \to 0$,
\begin{align*}
 \left|\E\!\left[H(\bar X^\eta)
 (\bar P_s^\eta f-\bar P_sf)(\bar X_t^\eta)\right]\right|\le\|H\|_\infty\|\bar P_s^\eta f-\bar P_sf\|_\infty
 \longrightarrow0.
\end{align*}
Passing to the limit in above identity yields $\E[H(\bar X)f(\bar X_{t+s})]
 =\E[H(\bar X)\bar P_sf(\bar X_t)]$.
With $\mathcal F_t^{\bar X}=\sigma(\bar X_r:0\le r\le t)$, by monotone-class theorem, 
$
 \E\!\left[f(\bar X_{t+s})\,\middle|\,\mathcal F_t^{\bar X}\right]
 =\bar P_sf(\bar X_t)$ almost surely,
which proves the Markov property of the constructed limit process $\bar X_t$.

Now we identify the generator of the limit process $\bar X_t$.
We first control the truncated generators, then pass to the limit in the semigroup identity. For $f\in C_c^\infty(\R^d)$, It\^o's formula for the autonomous activity diffusion and strong continuity give $f\in D(\Lop_1)$ and $\Lop_1f\in C_0$. The same holds for $\Lop_0$. Lemma~\ref{ass:semigroup_lipschitz} and contractivity give the integral converging in $\|\cdot\|_\infty$
\begin{equation}
 Lf=\Lop_0f+\int_0^\infty(P_r^{(1)}f-f)\,\nu(\diff r)\in C_0,
 \label{eq:proved_effective_operator}
\end{equation}
and the small-burst estimate
\begin{equation}
 \|\Lop_\eta f-Lf\|_{\infty}
 \le\|\Lop_1f\|_{\infty}\int_{(0,\eta]}r\,\nu(\diff r)\longrightarrow0.
 \label{eq:cauchy_bound}
\end{equation}
For the finite-jump semigroup $\bar P_t^\eta $, the generator identity gives
$\bar P_t^\eta f-f=\int_0^t\bar P_s^\eta\Lop_\eta f\,\diff s$.
The integrands converge in $\|\cdot\|_\infty$: their difference from $\bar P_sLf$ is at most
$\|\Lop_\eta f-Lf\|_{\infty}+\|(\bar P_s^\eta-\bar P_s)Lf\|_{\infty}$. They are bounded uniformly in $s$ and small $\eta$, so dominated convergence theorem yields $\bar P_tf-f=\int_0^t\bar P_sLf\,\diff s$.
Dividing by $t$ and using strong continuity proves $f\in D(\overline{\Lop})$ and $\overline{\Lop}f=Lf$. 
Using the Phillips formula \eqref{eq:phillips_prelim}, with zero subordinator drift, we identify the activity integral with $-\Phi(-\Lop_1)f$. For the regenerative environment, Theorem~\ref{thm:clock_limit} gives $\Phi(\lambda)=\kappa\lambda^\beta$ and $\nu=\nu_\beta$, yielding \eqref{eq:full_effective_gen}.
\end{proof}

\subsection{Stable-like kernels and the Brownian case}
\label{subsec:fractional_topology}

In this subsection, we prove the stable-like kernel representation in Theorem~\ref{cor:fractional_homog}. 

\begin{proof}[Proof of Theorem~\ref{cor:fractional_homog}]
The Phillips identification has been proved as part of Theorem~\ref{thm:full_homogenization}. The convergence assertions remains to identify the activity operator and its jump kernel in $\overline{\mathcal L}f$. For $0<\beta<1$, integration by parts, with $\lambda>0$, yields
\[
 \int_0^\infty(1-e^{-\lambda r})r^{-1-\beta}\,\diff r
 =\frac{\lambda}{\beta}\int_0^\infty e^{-\lambda r}r^{-\beta}\,\diff r
 =\frac{\Gamma(1-\beta)}{\beta}\lambda^\beta.
\]
The identity also holds at $\lambda=0$. With $\alpha=2\beta$ and $\kappa=\Gamma(1-\beta)/\mu_\tau$, the clock measure is
\[
 \nu_\beta(\diff r)=\frac{\kappa\beta}{\Gamma(1-\beta)}r^{-1-\beta}\,\diff r
 =\frac{\beta}{\mu_\tau}r^{-1-\beta}\,\diff r.
\]
Using the contractivity of semigroup $P^{(1)}$, for $f\in\D$, the identity $P_r^{(1)}f-f=\int_0^r P_s^{(1)}\Lop_1f\,\diff s$ gives
\begin{align*}
  \int_0^\infty\|P_r^{(1)}f-f\|_\infty\,\nu_\beta(\diff r)
  \le {}& \frac{\beta}{\mu_\tau}\left[
       \int_0^1 r\|\Lop_1f\|_\infty\frac{\diff r}{r^{1+\beta}}
       +\int_1^\infty\|P_r^{(1)}f-f\|_\infty\frac{\diff r}{r^{1+\beta}}\right] \\
  \le {}& \frac{\beta}{\mu_\tau}\left[
       \frac{\|\Lop_1f\|_\infty}{1-\beta}
       +\frac{2\|f\|_\infty}{\beta}\right]<\infty.
\end{align*}
Strong continuity of $P^{(1)}_t$ then makes the defining integral a convergent Bochner integral in $C_0(\R^d)$.
This proves the normalization in \eqref{eq:generalized_stable_definition} and the generator identity in Theorem~\ref{cor:fractional_homog}. 

We next derive the local kernel bounds from the diffusion heat kernel. The uniformly elliptic diffusion has a transition density with Gaussian bounds on every fixed finite time horizon.
In particular,$0<r\le 2$, by the Gaussian two-sided estimates (see e.g. Kusuoka~\cite{Kusuoka2017})
\begin{equation}\label{eq:elliptic_gaussian_bounds}
 c_1r^{-d/2}e^{-c_2|x-y|^2/r} \le p_r^{(1)}(x,y) \le C_1r^{-d/2}e^{-|x-y|^2/(C_2r)}.
\end{equation}
The positive constants in \eqref{eq:elliptic_gaussian_bounds} are independent on $x,y,r$.
For $r\ge1$, the semigroup identity and $\int p_{r-1}^{(1)}(x,z)\,\diff z=1$ for $r>1$ give
$p_r^{(1)}(x,y)=\int p_{r-1}^{(1)}(x,z)p_1^{(1)}(z,y)\,\diff z\le C_1$. Thus the part of \eqref{eq:generalized_stable_kernel} over $r\ge1$ is uniformly bounded by $C_1\nu_\beta([1,\infty))=C_1/\mu_\tau$.

Now we turn to show the jump kernel with $|x-y|\le1$  satisfies the bound \eqref{eq:local_stable_kernel_bounds}. 
Recall that
\[
j_{\alpha}(x,y)
=\int_{(0,1]}p_r^{(1)}(x,y)\,\nu_\beta(\diff r)
 +\int_{(1,\infty)}p_r^{(1)}(x,y)\,\nu_\beta(\diff r).
\]
We first consider small time contribution with $r\le 1$. If $0 < \rho:=|x-y|\le1$, substituting $u=\rho^2/r$ in \eqref{eq:elliptic_gaussian_bounds} and writing $m=(d+\alpha)/2>0$, we obtain the upper bound
\begin{align*}
   \int_0^1 p_r^{(1)}(x,y)\frac{dr}{r^{1+\beta}} \le & C_1\int_0^1 r^{-1-(d+\alpha)/2}e^{-\rho^2/(C_2r)}\,\diff r \\
 = & C_1\rho^{-d-\alpha}\int_{\rho^2}^\infty u^{m-1}e^{-u/C_2}\,\diff u \\
 \le & C_1C_2^m\Gamma(m)\rho^{-d-\alpha}.
\end{align*}
Since $\rho^{-d-\alpha}\ge1$, the large-time contribution for $r >1$ is absorbed into the same bound. This argument also proves that $j_{\alpha}(x,y)<\infty$ whenever $x\ne y$.
For the lower bound, integrate over $r\in[\rho^2,2\rho^2] \subset (0,2]$ in \eqref{eq:elliptic_gaussian_bounds}; the exponential is bounded below yields that
\[
 j_{\alpha}(x,y)
 \ge\frac{\beta}{\mu_\tau}c_1e^{-c_2}\int_{\rho^2}^{2\rho^2}r^{-1-m}\,\diff r
 =\frac{\beta}{\mu_\tau}c_1e^{-c_2}\frac{1-2^{-m}}{m}\rho^{-d-\alpha}.
\]
This proves \eqref{eq:local_stable_kernel_bounds}. 

To verify the integrability bound of jump kernel $j_{\alpha}(x,y)$, let $V^x$ be the autonomous activity diffusion with generator $\Lop_1$, started at $x$. Bounded coefficients in $\Lop_1$ and It\^o isometry imply
\[
 \E|V_r^x-x|^2
 \le2r\int_0^r\E|b_1(V_s^x)|^2\,\diff s
     +2\int_0^r\E|\sigma_1(V_s^x)|^2\,\diff s
 \le C(r^2+r).
\]
It follows that $\sup_x\E[1\wedge|V_r^x-x|^2]\le C(1\wedge r)$. Then by Fubini's theorem, we have
\begin{align}\label{eq:Levy bound}
 \sup_x \int_{\R^d\setminus\{x\}}(1\wedge|y-x|^2)j_{\alpha}(x,y)\,\diff y = & \sup_x \int_0^\infty\E[1\wedge|V_r^x-x|^2]
       \,\nu_\beta(\diff r) \nonumber \\
       \le & C\int_0^1 r^{-\alpha/2}\,\diff r
       +C\int_1^\infty r^{-1-\alpha/2}\,\diff r \nonumber\\
       < & \infty.
\end{align}
To verify the representation of $\mathcal A_{\alpha}$, we first choose a smooth cutoff $\chi:\R\to[0,1]$ with $\chi(s)=1$ for $s\le0$ and $\chi(s)=0$ for $s\ge1$, and set $h_n(z):=z\chi\bigl(n(|z|^2-1)\bigr), n\ge1$, so that $h_n\in C_c^\infty(\R^d;\R^d)$, $h_n(z)=z$ for $|z|\le1$, and $h_n(z)=0$ for $|z|\ge\sqrt{1+1/n}$. In particular, $|h_n(z)|\le\sqrt2$ and $h_n(z)\to h(z) = z\ind_{\{|z|\le1\}}(z)$ for every $z$.
For a drift estimate uniform in $n$, first apply It\^o's formula to the fixed smooth function $(h_1)_x(y)=h_1(y-x)$:
\[
 |\E h_1(V_r^x-x)| \le r\sup_x\|\Lop_1(h_1)_x\|_\infty\le Cr, \qquad 0<r\le1,
\]
where constant is independent of $x$. The differences $h_n-h_1$ and $h-h_1$ vanish on $|z|\le1$ and are uniformly bounded, so
\[
 |h_n(z)-h_1(z)|+|h(z)-h_1(z)|
 \le C(1\wedge|z|^2).
\]
The preceding displacement estimate therefore gives
\[
 |\E h_n(V_r^x-x)|+|\E h(V_r^x-x)|
 \le 2|\E h_1(V_r^x-x)|
      +C\E[1\wedge|V_r^x-x|^2]
 \le Cr,\qquad 0<r\le1,
\]
uniformly in $x,n$. For $r>1$, boundedness of $h_n$ and $h$ gives a uniform constant bound. Thus the drift in \eqref{eq:generalized_stable_drift} is well defined and satisfies
\begin{equation}\label{eq:absV}
\sup_x\int_0^\infty|\E h(V_r^x-x)|\,\nu_\beta(\diff r)
 \le\frac{\beta C}{\mu_\tau(1-\beta)}+\frac{\|h\|_\infty}{\mu_\tau}<\infty.
\end{equation}
For each fixed $r,x$, bounded convergence gives $\E h_n(V_r^x-x)\to\E h(V_r^x-x)$. The uniform bound $C(1\wedge r)$ is integrable against $\nu_\beta$, so dominated convergence also yields
\begin{align*}
 \ell_{h_n}(x):=\int_0^\infty\E h_n(V_r^x-x)\,\nu_\beta(\diff r)
       \longrightarrow\int_0^\infty\E h(V_r^x-x)\,\nu_\beta(\diff r)=\ell_{\alpha}(x).
\end{align*}
For $f\in C_c^\infty(\R^d)$, define $R_{f,n}(x,y)=f(y)-f(x)-h_n(y-x)\cdot\nabla f(x)$ and $R_f(x,y)=f(y)-f(x)-h(y-x)\cdot\nabla f(x)$. Since all these truncations equal $z$ on $|z|\le1$ and are uniformly bounded, Taylor's formula gives
\[
 |R_{f,n}(x,y)|+|R_f(x,y)|
 \le C_f(1\wedge|y-x|^2),
\]
with $C_f$ independent of $n$. The bound \eqref{eq:Levy bound} implies
\begin{equation}\label{eq:absRf}
\int_0^\infty\!\int_{\R^d}|R_f(x,y)|p_r^{(1)}(x,y)\,\diff y
       \,\nu_\beta(\diff r)
 \le C_f\int_{\R^d\setminus\{x\}}(1\wedge|y-x|^2)j_{\alpha}(x,y)\,\diff y<\infty.
\end{equation}
The same estimate holds with $R_f$ replaced by $R_{f,n}$, uniformly in $n$. For each smooth truncation $h_n$, the transition density gives
\begin{align*}
 P_r^{(1)}f(x)-f(x)
 &=\int_{\R^d}[f(y)-f(x)]p_r^{(1)}(x,y)\,\diff y\\
 &=\int_{\R^d}R_{f,n}(x,y)p_r^{(1)}(x,y)\,\diff y
   +\nabla f(x)\cdot\E h_n(V_r^x-x).
\end{align*}
Integrate this identity against $\nu_\beta(\diff r)$; absolute integrability and Fubini's theorem give the compensated term
\begin{align*}
 \int_0^\infty\!\int_{\R^d}R_{f,n}(x,y)p_r^{(1)}(x,y)\,\diff y\,
 \nu_\beta(\diff r) = & \int_{\R^d\setminus\{x\}}R_{f,n}(x,y)
 \left[\int_0^\infty p_r^{(1)}(x,y)
 \,\nu_\beta(\diff r)\right]\diff y\\
 &=\int_{\R^d\setminus\{x\}}R_{f,n}(x,y)j_{\alpha}(x,y)\,\diff y.
\end{align*}
Then combining with the uniform bound of drifts $\ell_{h_n}(x)$, we have
\[
 \mathcal A_{\alpha}f(x)
 =\ell_{h_n}(x)\cdot\nabla f(x)
  +\int_{\R^d\setminus\{x\}}R_{f,n}(x,y)j_{\alpha}(x,y)\,\diff y.
\]
Finally, $R_{f,n}(x,y)\to R_f(x,y)$, and the common quadratic bound is integrable against $j_{\alpha}(x,y)\,\diff y$. Then dominated convergence theorem together with the convergence of $\ell_{h_n}(x)$ above, yields
\begin{align*}
 \mathcal A_{\alpha}f(x) =\ell_{\alpha}(x)\cdot\nabla f(x)+\int_{\R^d\setminus\{x\}}\bigl[f(y)-f(x)-h(y-x)\cdot\nabla f(x)\bigr]
 j_{\alpha}(x,y)\,\diff y,
\end{align*}
which is \eqref{eq:generalized_stable_spatial}.
\end{proof}

\appendix

\section{Vague topology on Radon measures and cumulative clocks}\label{app:measure_topology}

To describe the concentration of activity into bursts, we use vague convergence of the measures $\mu^\varepsilon$ and then recover convergence of their cumulative clocks, which are used in Section \ref{sec:environmental_criteria}.

Let $E$ be a locally compact, second-countable Hausdorff space, and let $\cM(E)$ denote its nonnegative Radon measures. Writing $m(\varphi)=\int_E\varphi\,\diff m$, vague convergence means $m_n(\varphi)\to m(\varphi)$ for every continuous compactly supported function $\varphi\in C_c(E)$. The space $\cM(E)$ is Polish under this topology; see \cite{Kallenberg2017,Resnick1987}. We write $\cM_p(E)$ for the Radon point measures $\sum_i\delta_{x_i}$, and use $E=[0,\infty)$ for activity measures and $E=[0,\infty)\times(0,\infty]$ for burst point processes.

For random measures, $\mu_n\Longrightarrow\mu$ denotes weak convergence of their probability laws on $\cM(E)$. It is equivalent to joint convergence of $(\mu_n(\varphi_1),\ldots,\mu_n(\varphi_k))$ for every finite collection $\varphi_j\in C_c^+(E)$ to the corresponding vector for $\mu$, and also to convergence of Laplace functionals $\E[e^{-\mu_n(\varphi)}]\to\E[e^{-\mu(\varphi)}]$ for every $\varphi\in C_c^+(E)$ \cite[Theorem~4.11]{Kallenberg2017}.

A family of measures whose masses are uniformly bounded on every compact set is vaguely relatively compact \cite{Resnick1987}. Likewise, tightness of $(\mu_n(K))_n$ for every compact $K\subset E$ implies tightness of $(\mu_n)_n$ in $\cM(E)$ \cite[Theorem~4.10]{Kallenberg2017}. On $[0,\infty)$, it suffices to check the masses $\mu_n([0,H])$ for each $H>0$.

\paragraph{The Skorokhod $M_1$ topology.}
For $H>0$, let $D([0,H],\R)$ be the space of c\`adl\`ag paths. The completed graph $\Gamma_x$ of $x\in D([0,H],\R)$ contains the vertical segment joining $x(t-)$ to $x(t)$ at each time $t$, with $x(0-)=x(0)$. It is ordered by increasing time and, at a jump, from the left limit to the right value. Let $\mathcal R_H(x)$ consist of its continuous, order-preserving, onto parametrizations $(r,u):[0,1]\to\Gamma_x$, where $r$ and $u$ are the time and space coordinates. The $M_1$ metric is
\[
 d_{M_1,H}(x,y)
 =\inf_{\substack{(r,u)\in\mathcal R_H(x)\\(\widetilde r,\widetilde u)\in\mathcal R_H(y)}}
 \bigl(\|r-\widetilde r\|_{\infty}\vee\|u-\widetilde u\|_{\infty}\bigr).
\]
On $D([0,\infty),\R)$ we use the local $M_1$ topology: convergence means convergence of the restrictions to $[0,H]$ for every $H$ at which the limiting path is continuous. For nondecreasing paths, $M_1$ convergence on $[0,H]$ is equivalent to pointwise convergence on a dense set of continuity points together with convergence at $0$ and $H$; see \cite[Corollary~12.5.1 and Section~12.9]{Whitt2002}. This permits continuous increasing ramps to converge to jumps.

The following lemma transfers measure convergence to $M_1$ convergence of cumulative clocks.

\begin{lemma}\label{lem:measure_clock_continuity}
Let $m_n, m \in \cM([0,\infty))$ be nonnegative Radon measures.
\begin{enumerate}[label=(\alph*)]
\item If $m_n \to m$ vaguely and $m(\{t\}) = 0$ for some $t > 0$, then $m_n([0,t]) \to m([0,t])$. If additionally $m(\{0\}) = 0$, then $m_n(\{0\}) \to 0$. Consequently, if $m(\{0\})=m(\{H\})=0$ for some $H>0$, then
\[
 d_{M_1,H}\bigl(m_n([0,\cdot]),m([0,\cdot])\bigr)\longrightarrow0.
\]
\item Let $\mu_n, \mu$ be random measures on $[0,\infty)$ such that $\mu_n \Longrightarrow \mu$ in $(\cM([0,\infty)), \text{vague})$. If the limiting measure satisfies $\Pp(\mu(\{t\}) = 0) = 1$ for each fixed deterministic $t \ge 0$, then their cumulative clocks $A_t^n = \mu_n([0,t])$ converge to $A_t = \mu([0,t])$ in the local $M_1$ topology, and also in $M_1$ on every deterministic interval $[0,H]$:
\begin{equation}\label{eq:clock_fdd_from_measure}
 A^n  \Longrightarrow A \quad \text{in }(D([0,\infty),\R),M_1)
 \qquad \text{as } n\to\infty.
\end{equation}
Their finite-dimensional distributions converge as well.
\end{enumerate}
\end{lemma}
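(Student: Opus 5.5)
Part (a) is deterministic, and I will do it first. Fix $t>0$ with $m(\{t\})=0$. For $\delta>0$, choose continuous compactly supported functions $\varphi_\delta^-\le\ind_{[0,t]}\le\varphi_\delta^+$ on $[0,\infty)$. Take $\varphi_\delta^-=1$ on $[0,t-\delta]$, vanishing after $t$, and linear in between. Take $\varphi_\delta^+=1$ on $[0,t]$, vanishing after $t+\delta$. Vague convergence gives
\[
 m([0,t-\delta])\le\liminf_n m_n([0,t])\le\limsup_n m_n([0,t])\le m([0,t+\delta]),
\]
and letting $\delta\downarrow0$ with $m(\{t\})=0$ yields $m_n([0,t])\to m([0,t])$.

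If also $m(\{0\})=0$, I use a test function equal to $1$ at $0$, supported in $[0,\delta]$, and with values in $[0,1]$. This gives $\limsup_n m_n(\{0\})\le m([0,\delta])$, which tends to $m(\{0\})=0$.

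For the $M_1$ statement I use the monotone-path criterion recalled above (Whitt, Corollary~12.5.1). The paths $A^n=m_n([0,\cdot])$ and $A=m([0,\cdot])$ are nondecreasing and c\`adl\`ag. The continuity points of $A$ are exactly the non-atoms of $m$, and these form a dense set. By the first claim, $A^n$ converges to $A$ at every such point. It also converges at $0$, since $A^n(0)=m_n(\{0\})\to0=A(0)$, and at $H$, since $H$ is a non-atom. The criterion therefore gives $d_{M_1,H}(A^n,A)\to0$.

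For (b) I pass through a Skorokhod representation. Since $\cM([0,\infty))$ is Polish, I may realize $\mu_n\to\mu$ vaguely almost surely on one probability space. The limit has no fixed atoms, so $\Pp(\mu(\{t\})>0)=0$ for each deterministic $t$. Applying this at $t=0$ and along a countable dense set $\mathcal T$ of times, on a single full-measure event $\mu$ has no atom at $0$ or at any $t\in\mathcal T$.

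I then check that the criterion in (a) still applies, although the continuity points of $A$ are random. The deterministic interval $[0,H]$ has $\mu(\{H\})=0$ almost surely. The set $\mathcal T$ consists of continuity points of $A$ almost surely and is dense. Convergence on a dense set of continuity points, together with the endpoints, is exactly what the monotone criterion needs. Hence $d_{M_1,H}(A^n,A)\to0$ almost surely for every $H$ in $\mathcal T\cup\{\text{given }H\}$, and this gives the local $M_1$ convergence. Almost sure convergence on the coupling implies weak convergence of the laws in $(D([0,\infty),\R),M_1)$.

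Finite-dimensional convergence at deterministic times $t_1,\dots,t_k$ follows directly from (a). Each $t_j$ is almost surely a non-atom, so $A^n_{t_j}\to A_{t_j}$ almost surely, jointly in $j$.

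The main obstacle is the precise form of the monotone $M_1$ characterization. In particular, the right endpoint must be a continuity point, which is why both the deterministic interval and the local topology only require non-atoms at $H$. Also, pointwise convergence on a dense set must be compatible with jumps of $A$ being approached by continuous ramps. I would cite Whitt, Corollary~12.5.1 and Section~12.9, rather than reprove this.
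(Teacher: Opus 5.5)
Your proposal is correct and follows the paper's proof. Part (a) is the same argument: vague Portmanteau via test functions at $t>0$ and at $0$, then Whitt's monotone criterion (Corollary~12.5.1). In part (b), your Skorokhod coupling is the pathwise form of the paper's continuous-mapping argument for the cumulative map $m\mapsto m([0,\cdot])$.

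Two small remarks. The countable dense set $\mathcal T$ is unnecessary, since (a) is deterministic and applies, on the full-measure event, at every non-atom horizon of the realized $\mu$. Also, transferring laws through the coupling requires Borel measurability of the cumulative map, which the paper states explicitly.
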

\begin{proof}
For $t>0$, assertion~(a) follows from vague Portmanteau \cite{Resnick1987} since $\partial[0,t]=\{t\}$ in $[0,\infty)$. At zero, choose $\varphi_\delta\in C_c([0,\infty))$ with $0\le\varphi_\delta\le1$, $\varphi_\delta(0)=1$, and support in $[0,\delta)$. Then
\[
 \limsup_n m_n(\{0\})\le\int\varphi_\delta\,\diff m
 \le m([0,\delta))\longrightarrow m(\{0\})=0.
\]
For the final assertion of (a), the cumulative functions are nondecreasing and c\`adl\`ag. The preceding limits give convergence at $0$, at $H$, and at every continuity point in $(0,H)$ of $m([0,\cdot])$. The monotone-path criterion in \cite[Corollary~12.5.1]{Whitt2002} therefore gives the claimed $M_1$ convergence.

For (b), consider the cumulative map $F(m)=m([0,\cdot])$. This map is Borel measurable, since its coordinate evaluations are measurable and generate the Borel sigma-field of the Skorokhod space. Part~(a), applied at every continuity horizon $H$, shows that $F$ is continuous from the vague topology to the local $M_1$ topology at every $m$ with $m(\{0\})=0$. The continuous mapping theorem gives \eqref{eq:clock_fdd_from_measure}. For each fixed $H>0$, the additional almost-sure condition $\mu(\{H\})=0$ gives the same conclusion for the restriction to $[0,H]$. Evaluation at a positive time is $M_1$-continuous at paths continuous there, and evaluation at zero is continuous; the no-fixed-atom condition thus also gives finite-dimensional convergence.

Finally, stochastic continuity of a subordinator gives $\Delta S_t=0$ almost surely for each fixed $t>0$, while $\diff S(\{0\})=0$ by the convention $S_0=0$. Hence the required no-fixed-atom condition holds.
\end{proof}

\section*{Acknowledgments}
This paper arose in collaboration with AI tools. 
The main ideas, definitions, questions, and main direction of this work were developed by the author. 
AI tools were used to help with checking calculations and arguments, finding references, and improving the exposition.
All mathematical statements, proofs,and conclusions were verified by the author.

\end{document}